\pgfplotsset{compat=newest} 
\pgfplotsset{plot coordinates/math parser=false} 
\newlength\figureheight 
\newlength\figurewidth 
\newlength\malargeur
\newcommand{\msc}[2][2000]{%
  \let\@oldtitle\@title%
  \gdef\@title{\@oldtitle\footnotetext{#1 \emph{Mathematics subject
        classification.} #2}}% 
}
\theoremstyle{plain}
\newtheorem{theorem}{Theorem} [section]
\newtheorem{definition}[theorem]{Definition}
\newtheorem{assumption}[theorem]{Assumption}
\newtheorem{lemma}[theorem]{Lemma}
\newtheorem{proposition}[theorem]{Proposition}
\theoremstyle{remark}
\newtheorem{remark}[theorem]{Remark}
\def\C{{\mathbb C}}% complex numbers
\def\R{{\mathbb R}}% real numbers
\def\N{{\mathbb N}}% nonnegative integers
\def\Z{{\mathbb Z}}% integers
\def\T{{\mathbb T}}% torus
\def\O{\mathcal O}
\def\A{\mathcal A}
\def\B{\mathcal B}
\def\CC{\mathcal C}
\def\cont{\mathscr{C}}
\def\({\left(}
\def\){\right)}
\def\<{\left\langle}
\def\>{\right\rangle}
\def\le{\leqslant}
\def\ge{\geqslant}
\def\1{\mathbf 1}
\renewcommand{\leq}{\le}
\renewcommand{\geq}{\ge}
\def\Tend#1#2{\mathop{\longrightarrow}\limits_{#1\rightarrow#2}}
\def\d{{\partial}}
\def\eps{\varepsilon}
\DeclareMathOperator{\Sign}{Sign}
\numberwithin{equation}{section}
\newcommand\ut{{\underline t}}
\newcommand\uu{{\underline u}}
\newcommand\ux{{\underline x}}
\newcommand\To{{T_*}}
\begin{document}
\counterwithin{figure}{section}

\title[a conservation law with localized damping]{A conservation law with
  spatially localized sublinear damping}

\author[C. Besse]{Christophe Besse}
\address[C. Besse]{Institut de Math\'ematiques de Toulouse, UMR5219\\ Universit\'e de Toulouse, CNRS \\ UPS IMT, F-31062 Toulouse Cedex
9\\ France} 
\email{christophe.besse@math.univ-toulouse.fr}

\author[R. Carles]{R\'emi Carles}
\address[R. Carles]{CNRS\\ IMAG, UMR 5149\\ Montpellier\\ France}
\email{Remi.Carles@math.cnrs.fr}

\author[S. Ervedoza]{Sylvain Ervedoza}
\address[S. Ervedoza]{Institut de Math\'ematiques de Toulouse, UMR5219\\ Universit\'e de Toulouse, CNRS \\
UPS IMT, F-31062 Toulouse Cedex 9, France}
\email{sylvain.ervedoza@math.univ-toulouse.fr}

\date{\today}

\begin{abstract}
 We consider a general conservation law on the circle, in the presence
 of a sublinear damping. If the damping acts on the whole circle, then
 the solution becomes identically zero in finite time, following the
 same mechanism as the corresponding ordinary differential
 equation. When the damping acts only locally in space, we show a
 dichotomy: if the flux function is not zero at the origin, then the
 transport mechanism causes the extinction of the solution in finite
 time, as in the first case. On the other hand, if zero is a
 non-degenerate critical point of the flux function, then the solution
 becomes extinct in finite time only inside the damping zone, decays
 algebraically uniformly in space, and we exhibit a boundary layer, shrinking
 with time, around the damping zone. Numerical illustrations show how
 similar phenomena may be expected for other equations. 
\end{abstract}
\thanks{S. E.  was partially supported by IFSMACS ANR-15-CE40-0010 of
  the French National 
  Research Agency (ANR)} 
\maketitle

\section{Introduction}
\label{sec:intro}

  We consider a general conservation law on the torus $\T=\mathbb{R}/ \mathbb{Z}$, in the presence
of a sublinear damping, possibly localized in space,
\begin{equation}
  \label{eq:flux-general}
 \d_t u + \d_x\left( f(u) \right) +a(x)\frac{u}{|u|^\alpha}=0,\quad (t,x)\in
 \R_+\times \T,
\end{equation}
with a smooth flux $f\in \mathscr{C}^\infty(\R,\R)$, $0<\alpha\le 1$ and $a = a(x) \ge 0$. 
For the Cauchy problem,  we
prescribe the initial datum
\begin{equation}
  \label{eq:ci}
  u_{\mid t=0}=u_0, \quad x \in \T.
\end{equation}
In the case where $a>0$ is constant, the sublinear nonlinearity is
motivated by the effect of friction forces that occur in almost every
mechanism with moving parts, this process arising between all surfaces
in contact. The first concepts go back to the work of Leonardo
da~Vinci on
friction, rediscovered by Amontons \cite{Amontons} at the end of the
17th century, and 
then developed by Coulomb \cite{Coulomb} in the 18th century. The main
idea is that 
the friction is opposed to the movement and that the friction force is
independent of the speed $v$ and the contact surface. The friction force,
known today as Coulomb friction, is therefore described as $ F =
F_c\, \text{sgn}(v)$.
Depending on how the sign function is defined, it
can be zero  or take any value in the interval $[-F_c,F_c]$. In the
19th century, the theory of hydrodynamics was developed leading to
expressions for the frictional force caused by the viscosity of
lubricants, and is usually modeled by $ F = F_v v $. The linearity
with respect to speed is not always correct and a more general
relation is $F =F_v|v|^{\delta_v}\text{sgn} (v) $ where
$\delta_v $ depends on the geometry of the application 
% , in 
% which the case $\alpha=1$ corresponds to Coulomb friction, and the
% intermediary case $0<\alpha<1$ is referred to as ``strong friction'';
(see e.g. \cite{Olsson,AdAtCa06,AmDi03} and references therein). The
basic model for the motion of a body lying on a surface is given by the
Newton law. It reduces to the
ordinary differential equation, for $\alpha \in (0,1)$,
\begin{equation}
	\label{Model-ODE}
	\dot u = -\frac{u}{|u|^\alpha}, \quad t \in \R, \qquad u(0)= u_0\in \R.
\end{equation}
By separating the variables, explicit integration yields, in terms of
$\rho=u^2$, since $\rho\ge 0$,
\begin{equation}\label{eq:ode-explicit}
  \dot \rho = -2\rho^{1-\alpha/2},\quad \text{hence }\rho(t)= 
\left\{
  \begin{aligned}
    &\(|u_0|^\alpha-\alpha t\)^{2/\alpha}& \text{ if } t\le
    |u_0|^\alpha / \alpha,\\
&0 & \text{ if } t> |u_0|^\alpha / \alpha. 
  \end{aligned}
\right.
\end{equation}
Therefore, $\rho$ becomes zero in finite time, and so does $u$. Note
that for $\alpha = 1$, the equation \eqref{Model-ODE} should be understood in the sense
of Filippov (see \cite[Chapter 2]{Filippov}): $u' \in - \Sign(u)$, in
which $\Sign(u)$ is defined by  
\begin{equation}
	\label{Def-Sign-u}
	 \Sign(u) = 
	\left\{ 
\begin{aligned}
&\{1\}  &\text{ if } u >0,\\ 
&\{-1\}  & \text{ if } u < 0, \\
& [-1,1] & \text{ if } u =  0, 
\end{aligned}\right. 
\end{equation}
and the same argument as above still applies. Besides, note that
solutions of the above ODE \eqref{Model-ODE}, whether $\alpha \in
(0,1)$ or $\alpha = 1$, are unique in positive time even if the source
term is not $\cont^1$ with respect to $u$, as a consequence of the
one-sided Lipschitz condition satisfied by $h_\alpha(u ) = -
u/|u|^\alpha$, see \cite[Chapter 2, Section 10, Theorem 1]{Filippov},
which reads as follows: for all $(u,v) \in \R^2$,  
\[
	(u-v) (h_\alpha(u) - h_\alpha(v)) \leq 0.
\]

Such sublinear damping models have been considered for some partial
differential equations: in 
the case of the wave equation \cite{BaCaDi07,Perrollaz-Rosier-2014}, in the case of various
parabolic equations \cite{AgEs86,Be01,Be10,BeHeVe01,BeSh07,BeSh10},
and in the case of the Schr\"odinger equation
\cite{CaGa11,CaOz15}. The aspect that we now wish to investigate is
the effect of such a damping when it is localized in space. Typically,
the function $a$ in \eqref{eq:flux-general} can be thought of as an
indicating function. 

\smallbreak
Some of the results that we present can be adapted to the case where
the space variable belongs to the whole line $\R$. The reason why we
consider the periodic case is the following. On the whole line, the characteristics of the 
solution of \eqref{eq:flux-general} may cross the support of $a$
without undergoing such a strong affect as in \eqref{eq:ode-explicit},
that is, the sublinear damping occurs in too small a region to put $u$
to zero. On the other hand, in a periodic box, and in the case where
transport phenomenon is present, the solution will meet the support of
$a$ as long as it is not zero. These are typically the possibilities
which we want to understand. 
\begin{assumption}\label{hyp:a}
  The function $a$ is nonnegative, $a(x)\ge 0$ for all $x\in \T$,
  bounded, $a\in L^\infty(\T)$, and satisfies
  \begin{equation*}
  \sup_{y >0} \frac{1}{y} \int_\T |a(x+y)-a(x)| \ dx < \infty .
  \end{equation*}
\end{assumption}
Typically, this condition is satisfied for $a \in BV(\T)$, see \cite[Chapter 1 Theorem 1.7.1]{Da00} (in fact, this is nearly equivalent of being in $BV(\T)$). In particular, $a$ may be an indicating function,
$a(x)={\mathbf 1}_\omega(x)$ for some measurable set $\omega\subset \T$. 

\subsection{Cauchy problem}

 The notion of solution, as well as the vanishing viscosity method used
to solve the Cauchy problem, follow from standard arguments (which we
borrow from \cite{Da00}). We shall see that the presence of the
damping term in \eqref{eq:flux-general} requires only slight
modifications of this approach. We emphasize however that the case
$\alpha=1$ is specific, and we shall treat it by adapting the
approach of Filippov \cite{Filippov}. 
\begin{definition}[Notion of solution, $0<\alpha<1$]\label{def:solution}
  Let $\alpha \in (0,1)$. A bounded measurable function $u$
  on $[0,T]\times\T$ is an \emph{admissible weak solution} of
  \eqref{eq:flux-general}--\eqref{eq:ci}, with $u_0\in L^\infty(\T)$,
  if the inequality 
  \begin{equation}
  \label{AdmissibleWeak-Identity} 
     \int_0^T\!\!\!\!\int_\T \(\d_t \psi \eta(u)+\d_x \psi q(u)- a\psi
   \eta'(u) \frac{u}{|u|^{\alpha}} \)dxdt+\int_\T
    \psi(0,x)\eta\(u_0(x)\)dx\ge 0
  \end{equation}
holds for every convex function $\eta\in W^{1,\infty}$, with $q'=f'\eta'$, and all
nonnegative Lipschitz continuous test function $\psi$ on $[0,T]\times
\T$. 
\end{definition}
\begin{definition}[Notion of solution, $\alpha=1$]\label{def:solution-alph=1}
  Let $\alpha = 1$. A bounded measurable function $u$
  on $[0,T]\times\T$ is an \emph{admissible weak solution} of
  \eqref{eq:flux-general}--\eqref{eq:ci}, with $u_0\in L^\infty(\T)$,
  if there exists $h \in L^\infty((0,T)\times \T)$ such that
  \[
  	\partial_t u + \partial_x(f(u)) + h = 0, \quad \text{ in } \mathscr{D}'((0,T)\times \T),
  \]
  with
  \[
  	h(t,x) \in a(x) \Sign(u(t,x)),\quad \text{a.e. } (t,x) \in (0,T) \times \T,
  \]
  where $\Sign$ is defined in \eqref{Def-Sign-u}, 
  and such that the inequality 
  \begin{equation}
  \label{AdmissibleWeak-Identity-alph=1} 
     \int_0^T\!\!\!\!\int_\T \(\d_t \psi \eta(u)+\d_x \psi q(u)- \psi
   \eta'(u) h(t,x) \)dxdt+\int_\T
    \psi(0,x)\eta\(u_0(x)\)dx\ge 0
  \end{equation}
holds for every convex function $\eta\in W^{1,\infty}$, with $q'=f'\eta'$, and all
nonnegative Lipschitz continuous test function $\psi$ on $[0,T]\times
\T$. 
\end{definition}
In all that follows, the notion of solution refers either to
Definition~\ref{def:solution} (case $0<\alpha<1$), or to
Definition~\ref{def:solution-alph=1} (case $\alpha=1$). 
We show that the Cauchy problem is well-posed, regardless of the
value of $\alpha\in (0,1]$. 
\begin{proposition}[Cauchy problem]\label{prop:cauchy}
	Assume that $a$ satisfies Assumption \ref{hyp:a} and $\alpha \in (0,1]$.  Let $u_0\in L^\infty(\T)$. There exists a unique, global, admissible weak
  solution $u$ of \eqref{eq:flux-general}--\eqref{eq:ci}, $u\in
  \mathscr{C}^0(\R_+;L^1(\T))$. 
\end{proposition}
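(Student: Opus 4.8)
The plan is to solve the Cauchy problem by the classical vanishing viscosity method of \cite{Da00}, the only genuinely new ingredient being the treatment of the damping term $a(x)\,u/|u|^\alpha$, which fails to be Lipschitz in $u$ near the origin (and, when $\alpha=1$, is set-valued there). The organising principle throughout is that $u\mapsto u/|u|^\alpha$ is nondecreasing --- equivalently, the one-sided Lipschitz inequality recalled in the introduction --- so that every contribution of the damping carries a sign favourable to the estimates. First I would treat $0<\alpha<1$ with $u_0\in BV(\T)$. For $\eps,\delta>0$ replace $u/|u|^\alpha$ by a bounded, Lipschitz, odd, nondecreasing regularization $h_\delta$ (e.g.\ $h_\delta(u)=u/(|u|+\delta)^\alpha$, with $h_\delta\to u/|u|^\alpha$ locally uniformly and $|h_\delta(u)|\le|u|^{1-\alpha}$), and solve the uniformly parabolic problem $\d_t u^\eps+\d_x(f(u^\eps))+a(x)h_\delta(u^\eps)=\eps\,\d_{xx}u^\eps$, $u^\eps_{\mid t=0}=u_0\ast\varrho_\eps$, by standard fixed-point parabolic theory; the solution is smooth and global. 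I then derive a priori bounds uniform in $\eps,\delta$: (i) an $L^\infty$ bound $\|u^\eps(t)\|_{L^\infty}\le\|u_0\|_{L^\infty}$ from the maximum principle, the damping having the sign of $u^\eps$; (ii) a spatial $BV$ bound obtained by comparing $u^\eps(t,\cdot)$ with its translate $u^\eps(t,\cdot+y)$ in $L^1(\T)$: the flux and viscous terms are handled as in the undamped case, the diagonal damping contribution $a(x+y)\big(h_\delta(u^\eps(\cdot+y))-h_\delta(u^\eps(\cdot))\big)$ has the helpful sign by monotonicity, and the off-diagonal part $\big(a(x+y)-a(x)\big)h_\delta(u^\eps(\cdot))$ is bounded in $L^1$ by $\|u_0\|_\infty^{1-\alpha}\int_\T|a(x+y)-a(x)|\,dx\le C\,y$ by Assumption~\ref{hyp:a}, whence $|u^\eps(t)|_{BV}\le|u_0|_{BV}+Ct$; (iii) a Lipschitz-in-time bound $\|u^\eps(t)-u^\eps(s)\|_{L^1}\le C|t-s|$, from (ii) and the $L^1$-contraction of the regularized equation applied to its time-translate, the equation being autonomous in $t$.

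Next I pass to the limit. By (i)--(iii), Helly's selection theorem in space together with the time-equicontinuity (a Kolmogorov--Riesz / Aubin--Lions type argument) yields a subsequence, with $\delta=\delta(\eps)\to0$, converging in $\mathscr{C}^0([0,T];L^1(\T))$ and almost everywhere to some $u$ with the same $L^\infty$ and $BV$ bounds. For convex $\eta\in W^{1,\infty}$ and $q'=f'\eta'$, the regularized solution satisfies $\d_t\eta(u^\eps)+\d_x q(u^\eps)+a\,\eta'(u^\eps)h_\delta(u^\eps)\le\eps\,\d_{xx}\eta(u^\eps)$; testing against a nonnegative Lipschitz $\psi$, integrating by parts and letting $\eps\to0$ --- the viscous term is $O(\eps)$, and all nonlinear terms pass to the limit by almost everywhere convergence, the uniform $L^\infty$ bound, and the continuity of $\eta,q$ and of $u\mapsto u/|u|^\alpha$ (continuous since $\alpha<1$) --- produces \eqref{AdmissibleWeak-Identity}. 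This gives existence for $u_0\in BV(\T)$.

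For uniqueness and $L^1$-stability I use Kru\v zkov's doubling of variables: given two admissible weak solutions $u,v$, double $(t,x)$ and $(s,y)$, use the Kru\v zkov entropies $\eta=|\cdot-k|$ and the usual test function $\phi\big(\tfrac{t+s}{2},\tfrac{x+y}{2}\big)\varrho_\theta(t-s)\varrho_\theta(x-y)$, and let $\theta\to0$; the flux terms yield the classical contraction, while the damping terms combine into $-\iint\phi\,a(x)\,\mathrm{sgn}(u-v)\big(h(u)-h(v)\big)$ with $h(u)=u/|u|^\alpha$, which is $\le0$ by monotonicity of $h$. Hence $\|u(t)-v(t)\|_{L^1}\le\|u_0-v_0\|_{L^1}$, giving uniqueness and allowing to define the solution for an arbitrary $u_0\in L^\infty(\T)$: approximating $u_0$ in $L^1(\T)$ by $BV$ data with the same $L^\infty$ bound, the corresponding solutions form a Cauchy sequence in $\mathscr{C}^0(\R_+;L^1(\T))$ whose limit is readily checked to satisfy \eqref{AdmissibleWeak-Identity}; global existence follows since all estimates hold on every $[0,T]$. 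For $\alpha=1$ I adapt Filippov's viewpoint: replace $\Sign$ by the Lipschitz nondecreasing $S_\delta(u)=\min\{1,\max\{-1,u/\delta\}\}$, apply the above to the damping $a(x)S_\delta(u)$ to get $u_\delta\in\mathscr{C}^0(\R_+;L^1(\T))$ with $\|u_\delta(t)\|_\infty\le\|u_0\|_\infty$, $|u_\delta(t)|_{BV}\le|u_0|_{BV}+Ct$, and $\|a\,S_\delta(u_\delta)\|_{L^\infty}\le\|a\|_{L^\infty}$; extract $u_\delta\to u$ in $\mathscr{C}^0_{\mathrm{loc}}(\R_+;L^1(\T))$ and a.e., and $h_\delta:=a\,S_\delta(u_\delta)\rightharpoonup h$ weak-$\ast$ in $L^\infty$. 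Then $\d_t u+\d_x(f(u))+h=0$ in $\mathscr{D}'$, and $h\in a\,\Sign(u)$ a.e., because $|h_\delta|\le a$ passes to the weak-$\ast$ limit and, on $\{u\neq0\}$, $S_\delta(u_\delta)\to\Sign(u)$ pointwise for small $\delta$; passing to the limit in the entropy inequality (using that $\eta'(u_\delta)\to\eta'(u)$ strongly, paired against the weak-$\ast$ convergent $h_\delta$) yields the inequality of Definition~\ref{def:solution-alph=1}. Uniqueness is obtained as before, the point now being that the graph $u\mapsto a(x)\Sign(u)$ is monotone, so $\mathrm{sgn}(u-v)(h^u-h^v)\ge0$ for any admissible selections.

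The main obstacle is the $BV$ estimate uniform in the regularization: one cannot differentiate the damping term in $x$ (the nonlinearity is not smooth, and $a$ is not assumed to be in $W^{1,1}$), so the estimate must be run on translates and rely precisely on the translation regularity of $a$ furnished by Assumption~\ref{hyp:a}, with the monotonicity of the nonlinearity supplying the good sign for the diagonal part; the whole scheme only works because this monotonicity also makes the damping harmless in the maximum principle and helpful in the Kru\v zkov contraction. The secondary delicate point, specific to $\alpha=1$, is the identification of the weak-$\ast$ limit $h$ as an admissible selection of $a\,\Sign(u)$, which rests on the pointwise bound $|h_\delta|\le a$ and the pointwise convergence of $S_\delta(u_\delta)$ away from $\{u=0\}$.
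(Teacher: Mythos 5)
Your proposal is correct and follows essentially the same route as the paper: vanishing viscosity, with the monotonicity of $u\mapsto u/|u|^\alpha$ supplying the favourable sign in the $L^\infty$ bound, in the comparison of viscous solutions and in the Kru\v zkov doubling, spatial compactness obtained from translates using precisely the translation bound of Assumption~\ref{hyp:a}, and, for $\alpha=1$, a monotone regularization of the sign whose limit is identified as a selection of $a\,\Sign(u)$. The only (harmless) deviations are that the paper keeps $u/|u|^\alpha$ unregularized in the viscous equation and gets compactness directly for $L^\infty$ data from the translate estimate (so no BV step or density argument is needed), and for $\alpha=1$ it regularizes the damping by $a(x)\,u/|u|^{1-\mu}$ rather than by a truncated sign.
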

We will also need  the following comparison result.  
\begin{proposition}[Comparison principles]\label{prop:comparaison}
	Let $\alpha \in (0,1]$.
	\\
	1. Let $a$ satisfying Assumption \ref{hyp:a}, $u$ and $v$ be solutions of \eqref{eq:flux-general} with
 respective initial data $u_0,v_0\in L^\infty(\T)$ such that $u_0 \leq v_0$. Then
		 \begin{equation*}
			   u(t,x)\le v(t,x),\quad \forall t\ge 0,\
                           \text{ a.e. } x\in \T.  
		 \end{equation*}
	Besides,
\begin{equation*} 
			|u(t,x)| \leq \| u_0 \|_{L^\infty(\T)}, \quad
                        \forall t\ge 0,\ 
                        \text{a.e. } x\in \T. 
		\end{equation*}
	2. Let $a_1$ and $a_2$ satisfying Assumption \ref{hyp:a} such that
        for almost all $x \in \T$, $a_1(x) \le a_2(x)$. Then, denoting by
        $u_1$ and $u_2$ the respective solutions to
        \eqref{eq:flux-general}--\eqref{eq:ci} with the same initial
        datum $u_0 \in L^\infty(\T)$ with $u_0\ge 0$, we have 
		 \begin{equation*}
			   u_1(t,x)\ge u_2(t,x)\ge 0,\quad \forall
                           t\ge 0, \ \text{a.e. } x\in  \T. 
		 \end{equation*}	
\end{proposition}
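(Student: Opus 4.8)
The plan is to obtain all three assertions from the doubling-of-variables technique of Kru\v zkov, in the form used to prove uniqueness for scalar conservation laws (see \cite{Da00}), the only new ingredient being the damping term. Two structural facts make this work: $a\ge 0$, and the pointwise damping nonlinearity $s\mapsto s/|s|^\alpha$ is nondecreasing on $\R$ --- equivalently, the map $h_\alpha$ of \eqref{Model-ODE} satisfies the one-sided Lipschitz condition recalled just after \eqref{Def-Sign-u}; for $\alpha=1$ this monotonicity is played by that of the graph of $s\mapsto\Sign(s)$. Throughout, $\mathbf 1_+$ denotes the Heaviside function $\mathbf 1_{(0,\infty)}$, so that $(r)_+:=\max(r,0)$ has derivative $\mathbf 1_+$.

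The $L^\infty$ bound needs no doubling. Set $M=\|u_0\|_{L^\infty(\T)}$ and feed into the entropy inequality \eqref{AdmissibleWeak-Identity} the convex Lipschitz entropy $\eta(u)=(u-M)_+$, with $q(u)=\mathbf 1_+(u-M)(f(u)-f(M))$ and a test function depending only on $t$, so that the flux term drops out; since $M\ge 0$, on $\{u>M\}$ one has $u/|u|^\alpha=u^{1-\alpha}\ge 0$, i.e.\ the damping term has the sign favorable to the inequality, and one is left with $\frac{d}{dt}\int_\T(u-M)_+\,dx\le 0$ with vanishing value at $t=0$, hence $u\le M$; the entropy $\eta(u)=(-M-u)_+$ gives $u\ge -M$. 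For the two comparison statements I would run the doubling between two solutions $u$ (coefficient $a_u$) and $v$ (coefficient $a_v$), using the one-sided Kru\v zkov entropy, which produces, in $\mathscr D'((0,T)\times\T)$,
\[
 \d_t(u-v)_+ + \d_x\big[\mathbf 1_+(u-v)(f(u)-f(v))\big] + \mathbf 1_+(u-v)\Big(a_u\,\tfrac{u}{|u|^\alpha}-a_v\,\tfrac{v}{|v|^\alpha}\Big)\le 0,
\]
with initial value $(u_0-v_0)_+$ (for $\alpha=1$, replace the damping nonlinearities by the corresponding selections $h$ of Definition~\ref{def:solution-alph=1}); integrating over $\T$ kills the flux term by periodicity, so it remains to control the sign of the damping integral. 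In Part~1, $a_u=a_v=a$ and $\mathbf 1_+(u-v)\,a\,(u/|u|^\alpha-v/|v|^\alpha)\ge 0$ by $a\ge 0$ and monotonicity, whence $\frac{d}{dt}\int_\T(u-v)_+\,dx\le 0$; since $u_0\le v_0$ the initial value vanishes, so $u\le v$ a.e.

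For Part~2 I would first apply Part~1 with $v\equiv 0$ (an exact solution) to get $u_1,u_2\ge 0$, and then apply the displayed inequality with $u=u_2$, $a_u=a_2$, $v=u_1$, $a_v=a_1$, splitting the damping integrand as
\[
 a_2\,\tfrac{u_2}{|u_2|^\alpha}-a_1\,\tfrac{u_1}{|u_1|^\alpha}=a_2\Big(\tfrac{u_2}{|u_2|^\alpha}-\tfrac{u_1}{|u_1|^\alpha}\Big)+(a_2-a_1)\,\tfrac{u_1}{|u_1|^\alpha}.
\]
Multiplied by $\mathbf 1_+(u_2-u_1)\ge 0$, the first term is $\ge 0$ by monotonicity and $a_2\ge 0$, while the second is $\ge 0$ because $a_2\ge a_1$ and $u_1\ge 0$ --- this is precisely where the hypothesis $u_0\ge 0$ is used. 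Hence $\frac{d}{dt}\int_\T(u_2-u_1)_+\,dx\le 0$ with zero initial value, so $0\le u_2\le u_1$ a.e., which is the claim.

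The step I expect to require genuine care is the limit $\delta\to 0$ in the doubling, since $a$ is only assumed bounded: after regularizing one is left with remainder terms dominated by $\|v\|_{L^\infty}^{1-\alpha}\int_\T\!\int_\T|a(x)-a(y)|\,\rho_\delta(x-y)\,dx\,dy$ (the analogous $t$-remainder is harmless since $a$ does not depend on $t$), and this vanishes as $\delta\to 0$ precisely by Assumption~\ref{hyp:a}, which gives $\int_\T|a(x+z)-a(x)|\,dx\le C|z|$, so that the term is $O(\delta)$; this is the only place the $BV$-type regularity of $a$ is used (the boundedness of $v$ being provided by Proposition~\ref{prop:cauchy}). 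The case $\alpha=1$ is handled along the same lines: when $a\ge 0$, any admissible selections $h_u\in a\,\Sign(u)$, $h_v\in a\,\Sign(v)$ satisfy $\mathbf 1_+(u-v)(h_u-h_v)\ge 0$ by monotonicity of the graph of $\Sign$, and in Part~2, on $\{u_2>u_1\}$ one has $u_2>0$, so the $u_2$-damping selection is forced to equal $a_2(x)$, which is $\ge a_1(x)\ge h_1$, keeping the sign; the constant levels $k=\pm M$ used for the $L^\infty$ bound are unproblematic, constants being entropy sub- and super-solutions.
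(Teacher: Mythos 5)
Your proposal is correct, but it follows a genuinely different route from the paper. You establish everything intrinsically at the level of entropy solutions: a Kru\v zkov doubling with the one-sided entropies $(u-k)_+$ yields the Kato inequality for $(u-v)_+$, the $x$-dependence of the damping producing a remainder $\iint |a(x)-a(\ux)|\rho_\delta(x-\ux)$ that you correctly kill with Assumption~\ref{hyp:a}, and the $L^\infty$ bound comes directly from the entropy inequality with $\eta(u)=(u\mp M)_\pm$ and a time-only test function. The paper instead performs all comparison arguments on the viscous approximation \eqref{eq:viscosite}: the $L^\infty$ bound is the $L^p$ estimate \eqref{eq:apriorivisc} obtained by multiplying the smooth viscous equation by $|u_\mu|^{p-2}u_\mu$ and letting $p\to\infty$; the order-preservation and the comparison of ordered damping terms are obtained by testing the equation for $u_\mu-\bar u_\mu$ with $\eta_\eps'$ (leading to \eqref{eq:monotonieA} and its analogue in item $(iii)$ of Proposition~\ref{prop:cauchy-gen}) and then passing to the limit $\mu\to 0$, the doubling of variables being used only for uniqueness with the full modulus entropy $|u-\uu|$; Assumption~\ref{hyp:a} enters there through the spatial equicontinuity of $u_\mu$ rather than through a doubling remainder. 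The monotonicity/sign arguments are the same in both proofs — in particular your splitting $a_2(u_2/|u_2|^\alpha-u_1/|u_1|^\alpha)+(a_2-a_1)u_1/|u_1|^\alpha$ on $\{u_2>u_1\}$ is exactly the paper's decomposition, and your treatment of $\alpha=1$ via selections $h\in a\,\Sign(u)$ matches the paper's approximation $h_{1-\mu}$ in spirit. What each approach buys: the paper's route recycles the vanishing-viscosity construction, so all computations are done on smooth solutions and no second doubling is needed, but it ties the comparison principle to that construction; your route is independent of how the solution was built and works for any admissible weak solution, at the price of the more delicate semi-Kru\v zkov doubling with an $x$-dependent source — the step you rightly single out as the one needing care, and for which Assumption~\ref{hyp:a} is precisely tailored.
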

\begin{remark}[BV solutions]\label{rem:BV}
  As a straightforward consequence of the proof of Proposition \ref{prop:comparaison}, given in
  Section~\ref{sec:cauchy}, one can show that if $u_0\in BV(\T)$, then the
  solution remains in $BV$,  $u\in L^\infty(\R_+;BV(\T))$. However, we
  shall not use this property in this paper. 
\end{remark}

\subsection{Extinction results}

 We  now focus on the core of this article, and give several results
 regarding the possible extinction of the solutions $u$ of
 \eqref{eq:flux-general}--\eqref{eq:ci}. The results  depend on the
 flux $f$ (its behavior near the origin), and
 the damping coefficient $a$, which is always assumed to satisfy
 Assumption \ref{hyp:a}.  
\smallbreak
The first case which we consider is the one corresponding to a damping
coefficient acting everywhere. 

\begin{proposition}[Finite time extinction with  damping
  everywhere]\label{prop:partout} 
	Suppose that there exists $\delta>0$ such that
\begin{equation}\label{eq:partout}
	a(x)\ge \delta>0,\quad \forall x\in \T.
\end{equation}
  Let $u_0\in L^\infty(\T)$. There exists
  $T>0$ such that the solution 
  to \eqref{eq:flux-general}--\eqref{eq:ci}  satisfies
  \begin{equation*}
    u(t,x)=0, \quad \forall t\ge T,\ \text{a.e. }x\in \T. 
  \end{equation*}
  Besides, $T$ can be chosen as 
  \begin{equation}
  	\label{Extinct-Time-a-partout}
  	T  =\frac{1}{\alpha \delta} \|u_0 \|_{L^\infty(\T)}^\alpha.
  \end{equation}
\end{proposition}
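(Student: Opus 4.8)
The plan is to sandwich $u$ pointwise between $\pm\bar u(t)$, where $\bar u$ is the spatially homogeneous solution associated with the constant damping $\delta$, and to obtain this sandwich from the two comparison principles of Proposition~\ref{prop:comparaison}.

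First I would introduce $M:=\|u_0\|_{L^\infty(\T)}$ and the profile
\[
  \bar u(t):=\bigl(M^\alpha-\alpha\delta t\bigr)_+^{1/\alpha},
\]
which, arguing exactly as in \eqref{eq:ode-explicit} but with the factor $\delta$ in front of the source (and, for $\alpha=1$, in the Filippov sense), is the unique solution of $\dot y=-\delta\, y/|y|^\alpha$ with $y(0)=M$; note $\bar u(t)=0$ for all $t\ge T:=M^\alpha/(\alpha\delta)$. Since $\bar u$ does not depend on $x$ one has $\partial_x\bigl(f(\bar u)\bigr)=0$, and because $\int_\T\partial_x\psi\,dx=0$ on the torus, a direct verification shows that $(t,x)\mapsto\bar u(t)$ is the admissible weak solution of \eqref{eq:flux-general}--\eqref{eq:ci} associated with the constant damping coefficient $a\equiv\delta$ and the constant initial datum $M$ (with $h\equiv\delta$ where $\bar u>0$ and $h\equiv0$ afterwards, when $\alpha=1$). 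This verification --- checking the entropy inequalities of Definition~\ref{def:solution} or Definition~\ref{def:solution-alph=1} for a spatially homogeneous profile --- is the only slightly technical point, and it reduces to the chain rule $\frac{d}{dt}\eta(\bar u(t))=\eta'(\bar u(t))\dot{\bar u}(t)$ for a.e.\ $t$, valid since $\bar u$ is monotone and absolutely continuous.

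Next I would build the upper bound. Let $v$ denote the solution of \eqref{eq:flux-general} with damping $a$ and constant initial datum $v_0\equiv M\ge 0$, which is well defined by Proposition~\ref{prop:cauchy}. Applying point~2 of Proposition~\ref{prop:comparaison} with $a_1\equiv\delta\le a=a_2$ (which holds by \eqref{eq:partout}) and the common nonnegative datum $M$ gives $0\le v(t,x)\le\bar u(t)$ for all $t\ge 0$ and a.e.\ $x$. Since $u_0\le M=v_0$, point~1 of the same proposition then yields
\[
  u(t,x)\le v(t,x)\le\bar u(t),\qquad\forall t\ge0,\ \text{a.e. }x\in\T .
\]
For the lower bound, I would apply the very same argument to $\tilde u:=-u$, which is the admissible weak solution of the conservation law with the same damping $a$ but with the (still smooth) flux $\tilde f(s):=-f(-s)$ and initial datum $-u_0$, whose $L^\infty$ norm is again $M$; this gives $-u(t,x)\le\bar u(t)$. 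Hence $|u(t,x)|\le\bar u(t)$ for all $t\ge0$ and a.e.\ $x$, and since $\bar u(t)=0$ for $t\ge T=\|u_0\|_{L^\infty(\T)}^\alpha/(\alpha\delta)$ this is exactly the claim, including the value \eqref{Extinct-Time-a-partout} of the extinction time.

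The main obstacle, as indicated above, is not the comparison steps --- which are immediate once Proposition~\ref{prop:comparaison} is available --- but checking that the explicit ODE profile $\bar u$, extended trivially in $x$, genuinely qualifies as an admissible weak solution for the constant coefficient $\delta$, so that the comparison principles can legitimately be applied to it.
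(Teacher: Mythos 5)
Your argument is correct and delivers the same extinction time \eqref{Extinct-Time-a-partout}, but it follows a genuinely different route from the paper. The paper's proof is a Lyapunov argument: it tests the entropy formulation with $\eta(u)\sim|u|^{p}$ and $\psi\equiv 1$, obtains $\frac{d}{dt}\ln\|u(t)\|_{L^p(\T)}\le-\delta\|u(t)\|_{L^\infty(\T)}^{-\alpha}$, lets $p\to\infty$ to reach \eqref{eq:faux-gronwall}, and concludes by an ODE comparison for $\Phi(t)=\int_0^t\|u(s)\|_{L^\infty(\T)}^{-\alpha}ds$. You instead sandwich $u$ between $\pm\bar u(t)$ using both parts of Proposition~\ref{prop:comparaison} (comparison in the data and monotonicity in $a$, with $a_1\equiv\delta\le a$, noting a constant trivially satisfies Assumption~\ref{hyp:a}), together with the symmetry $u\mapsto-u$, $f\mapsto-f(-\cdot)$ that the paper itself invokes in Subsection~\ref{Subsec-Negative-Cstt}; this is closer in spirit to how the paper proves Theorem~\ref{thm:Burgers-Main} by comparison with constant-data solutions. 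The one step you flag as technical — that the spatially homogeneous profile $\bar u$ is the admissible solution for $a\equiv\delta$ — can be disposed of even more cheaply than by verifying the entropy inequalities directly: $\bar u$ solves the viscous approximation \eqref{eq:viscosite} (respectively \eqref{eq:viscosite-mu-1} when $\alpha=1$) exactly for every $\mu>0$, since the $x$-derivatives vanish, so it is the entropy solution by the construction and uniqueness in Proposition~\ref{prop:cauchy-gen}. The trade-off between the two proofs: your barrier argument is more elementary (no $p\to\infty$ limit, no logarithmic differential inequality), but it hinges on the damping being bounded below on all of $\T$ so that a spatially homogeneous supersolution exists; the Lyapunov computation is the one that survives localization — with the spatial weights of Section~\ref{sec:transport}, or the moving windows in the remark on the whole-line case — which is why the paper sets it up this way.
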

The proof of Proposition \ref{prop:partout} is presented in Section
\ref{sec:partout} and is based on a Lyapunov approach. More precisely,
we derive $L^p(\T)$ estimates on the solutions of
\eqref{eq:flux-general}--\eqref{eq:ci},  and let then $p$ go to
infinity, so that we obtain a differential inequality for the
$L^\infty(\T)$-norm of the solutions of
\eqref{eq:flux-general}--\eqref{eq:ci}, which in turn implies its
extinction in finite time. 
\smallbreak

Next, as motivated above, we  consider the case in which the damping
coefficient acts only 
in some part of the domain: 
\begin{equation}
	\label{eq:local}
	\exists \hbox{ an open interval } \omega \subset \T \hbox{ and } \delta >0 \, \hbox{ s.t. } \quad
	a(x) \ge \delta, \quad \forall x \in \omega.
\end{equation}
The extinction of the solution of
\eqref{eq:flux-general}--\eqref{eq:ci} in this case will depend on the
flux. Namely, we will treat two different cases, depending whether
$f'(0)$ vanishes or not. 
The easier case corresponds to the presence of transport at the origin, 
\begin{equation}
	\label{eq:transport}
	f'(0) \neq 0.
\end{equation}
In this case, one expects that the transport phenomenon will steer the
solution through the set $\omega$ an arbitrary number of times, so that
the strong friction term will make the solution vanish after some finite
time.  
%
%
% \\ Donatello Perrollaz ?, \cite{Donatello-Perrollaz} \\ 
% Pas de papier avec Donatello sur la page de Perrollaz...? Et pas de
% Donatello sur MathSciNet. 
%

%
In agreement with these insights, in Section~\ref{sec:transport} we prove the following result:
\begin{theorem}[Finite time extinction by transport]
	\label{thm:Transport}
	Assume that the flux $f$ is smooth and satisfies
        \eqref{eq:transport}, and the damping profile $a$ satisfies
        Assumption \ref{hyp:a} and \eqref{eq:local}. Let $K$ be such
        that  
	\begin{equation}
		\label{eq:def-K}
		\inf_{ s \in [-K,K]} |f'(s)| >0.
	\end{equation}
	\\
	Then for any initial datum $u_0 \in L^\infty(\T)$ satisfying 
	\begin{equation}
		\label{eq:ci-small}
		\|u_0\|_{L^\infty(\T)} \le K, 
	\end{equation}
	there exists $T>0$ such that the solution $u$ of \eqref{eq:flux-general}--\eqref{eq:ci} satisfies
	\begin{equation}
		\label{eq:null-after-some-time}
		u(t, x) = 0, \quad \forall t \ge T,\, a.e.\, x \in \T.
	\end{equation}
\end{theorem}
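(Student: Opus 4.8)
\emph{Plan of proof.} The intended mechanism is the one sketched just before the statement: because $f'(0)\neq0$, the solution is transported across $\omega$ over and over again, and each passage through $\omega$ makes it undergo the ODE extinction of \eqref{eq:ode-explicit}; since a single passage takes a uniformly bounded time and removes a fixed amount of the ($\alpha$-th power of the) sup-norm, the solution reaches $0$ after finitely many passages. I would organise this in three steps: reductions, a quantitative ``one-lap'' estimate, and iteration.

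\textbf{Reductions.} Since $f'$ does not vanish on $[-K,K]$ by \eqref{eq:def-K} and is continuous, it has a constant sign there; replacing $x$ by $-x$ if necessary I may assume $f'\ge c_0:=\inf_{[-K,K]}|f'|>0$ on $[-K,K]$. Writing $u_0=u_0^+-u_0^-$ with $u_0^\pm\ge0$ and $\|u_0^\pm\|_{L^\infty}\le\|u_0\|_{L^\infty}\le K$, Proposition~\ref{prop:comparaison}.1 gives $u^-\le u\le u^+$, where $u^\pm$ solve \eqref{eq:flux-general} with data $-u_0^-$ and $u_0^+$. Moreover $-u^-$ solves the same type of equation with flux $\tilde f(s):=-f(-s)$, which again satisfies \eqref{eq:transport} and \eqref{eq:def-K}, and with the nonnegative datum $u_0^-$. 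Hence it suffices to treat $u_0\ge0$, in which case $0\le u(t,x)\le\|u_0\|_{L^\infty}\le K$ for all $t$ (compare with the null solution and use the second bound of Proposition~\ref{prop:comparaison}.1). Finally, by Proposition~\ref{prop:comparaison}.2 I may replace $a$ by the smaller coefficient $\delta\mathbf 1_\omega$; shrinking $\omega$ if needed I assume $\omega=(x_0,x_0+\ell)$ with $0<\ell<1$ (if $\omega=\T$, Proposition~\ref{prop:partout} already applies).

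\textbf{A one-lap estimate.} Put $C_1:=\max_{[0,K]}f'\ge c_0$, $\tau:=2/c_0$ and $\kappa:=\alpha\delta\ell/C_1>0$. The core of the proof is
\begin{equation}\label{eq:plan-onelap}
  \|u(t+\tau)\|_{L^\infty(\T)}^\alpha\le\bigl(\|u(t)\|_{L^\infty(\T)}^\alpha-\kappa\bigr)_+,\qquad t\ge0.
\end{equation}
To prove \eqref{eq:plan-onelap}, fix $x^*\in\T$ and set $w^*:=u(t+\tau,x^*)$; assume $w^*>0$, and follow the backward characteristic $\xi$ issued from $(t+\tau,x^*)$ over $[t,t+\tau]$. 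Its slope belongs to the range of $f'$ over the values taken by $u$, hence to $[c_0,C_1]$; lifted to $\R$ it therefore covers a distance $\ge c_0\tau=2$, so (as $\ell<1$) it sweeps all of $\T$ and in particular crosses $\omega$ from $x_0$ to $x_0+\ell$ on some subinterval $[s_1,s_2]\subset[t,t+\tau]$ with $s_2-s_1\ge\ell/C_1$. Along $\xi$ the value $g(s):=u(s,\xi(s))$ is nonincreasing (transported where $a=0$, damped where $a=\delta$), and on $[s_1,s_2]$ it solves $\dot g=-\delta g^{1-\alpha}$ as long as $g>0$; hence either $g$ vanishes before $s_2$, and then $w^*\le g(s_2)=0$, or $g(s_2)^\alpha\le g(s_1)^\alpha-\alpha\delta(s_2-s_1)\le g(s_1)^\alpha-\kappa$. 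Since $w^*\le g(s_2)$ and $g(s_1)=u(s_1,x_0)\le\|u(s_1)\|_{L^\infty}\le\|u(t)\|_{L^\infty}$ — the last step because $t\mapsto\|u(t)\|_{L^\infty}$ is nonincreasing by Proposition~\ref{prop:comparaison}.1 — we get $(w^*)^\alpha\le\|u(t)\|_{L^\infty}^\alpha-\kappa$ whenever this is $\ge0$, and $w^*=0$ otherwise. Taking the supremum over $x^*$ yields \eqref{eq:plan-onelap}.

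\textbf{Conclusion and main difficulty.} Iterating \eqref{eq:plan-onelap} gives $\|u(n\tau)\|_{L^\infty}^\alpha\le(K^\alpha-n\kappa)_+$, which vanishes as soon as $n\ge K^\alpha/\kappa$; thus $u(T,\cdot)=0$ for $T:=\lceil K^\alpha/\kappa\rceil\,\tau$, hence $u(t,\cdot)=0$ for all $t\ge T$ by the sup-norm bound of Proposition~\ref{prop:comparaison}.1, which is \eqref{eq:null-after-some-time}. The one point requiring real care is the use of characteristics in the one-lap estimate: for a general, not genuinely nonlinear flux $f$ and a discontinuous coefficient $a$, one must make rigorous sense of ``the backward characteristic'' and of the ODE $\dot g=-\delta g^{1-\alpha}$ along it. I expect this to be done either via Dafermos's theory of generalized characteristics (the extremal backward characteristic is a Lipschitz curve with slope in $[c_0,C_1]$ along which $u$ is transported up to the source), or — more robustly — by running the estimate on the vanishing-viscosity approximations, where solutions are classical and the comparison/monotonicity properties used above are immediate (one may even first reduce to the time-monotone solution issued from the constant datum $\|u_0\|_{L^\infty}$), and then passing to the limit. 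The case $\alpha=1$ is identical once $u/|u|^\alpha$ is read as the Filippov selection of Definition~\ref{def:solution-alph=1}: along characteristics the ODE becomes $\dot g=-\delta$, which reaches $0$ in time $\le g(s_1)/\delta$, and $\kappa$ is replaced by $\delta\ell/C_1$.
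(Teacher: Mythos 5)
Your reductions (sign change, splitting $u_0=u_0^+-u_0^-$, comparison with the smaller damping $\delta\mathbf 1_\omega$ via Proposition~\ref{prop:comparaison}) are fine, and the geometric intuition behind your ``one-lap'' estimate is exactly the mechanism the paper has in mind. But that estimate is precisely where the whole difficulty of the theorem sits, and your proposal does not prove it. For a general $L^\infty$ entropy solution the bound $\|u(t+\tau)\|_{L^\infty}^\alpha\le(\|u(t)\|_{L^\infty}^\alpha-\kappa)_+$ is obtained by following ``the backward characteristic'' and solving $\dot g=-\delta g^{1-\alpha}$ along it; yet the flux is only assumed smooth with $f'\neq 0$ on $[-K,K]$ (no convexity), so shocks and contact discontinuities occur and Dafermos's theory of generalized characteristics, which requires genuine nonlinearity, does not apply off the shelf; moreover the source $a(x)u/|u|^\alpha$ is discontinuous in $x$ (an indicator, under Assumption~\ref{hyp:a}) and non-Lipschitz in $u$, which is outside that theory as well. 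The paper itself only argues along characteristics in Section~\ref{sec:burgers}, for a convex flux and the special constant datum, and even there it needs the long construction and justification of Lemma~\ref{Lem-Reg-u0}. Your fallback options do not close the gap: the vanishing-viscosity approximation has infinite speed of propagation and no characteristics, so the pointwise one-lap bound has no ``immediate'' viscous counterpart and no error-controlled passage to the limit is indicated; and ``reducing to the solution issued from the constant datum $\|u_0\|_{L^\infty}$'' is itself a nontrivial construction (shocks form even from constant data once the localized damping creates spatial structure), essentially the content of the paper's Section~\ref{sec:burgers}, which is carried out there only under a convexity assumption.

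For contrast, the paper's proof avoids characteristics altogether: Lemma~\ref{Lem:Transport-decay} proves exponential decay of $\|u(t)\|_{L^\infty(\T)}$ for all data with $\|u_0\|_{L^\infty}\le K$ via the weighted Lyapunov functionals $E_{p,\lambda}(t)=\int_\T|e^{-\lambda\varphi(x)}u(t,x)|^p\,dx$, with $\varphi(x)=x$ outside $\omega$ so that the weight exploits the periodic transport; Lemma~\ref{Lem:Transport-Extinction} then shows finite-time extinction for small data by an $L^p$ estimate on a shrinking cone $(A_-(t),A_+(t))\subset\omega$ (test functions supported between characteristics of extreme speeds), followed by an $L^2$ estimate on an expanding interval that sweeps the zero region around the torus. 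If you want to keep your iteration scheme, you would need to replace the backward-characteristic step by an argument of comparable substance — for instance a rigorous comparison with an explicitly constructed supersolution issued from a constant datum, i.e.\ redoing the analogue of Lemma~\ref{Lem-Reg-u0} in the setting $f'\neq 0$ without convexity — so as written the proof is incomplete at its central step.
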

To illustrate the typical behavior of such a solution, we plot on Figure \ref{fig:evo_transport} the evolution of the solution of the transport equation corresponding to $f(u)=2u$ with initial datum $u_0(x)=1.25$ for $(t,x)\in [0,10]\times(0,1)$, $a(x)=\1_{(0,1/4)}$ and $\alpha=1$. The solution is computed using the numerical procedure described in Section \ref{sec:num}. The dashed line indicates the position of the support of $a$.
\begin{figure}[h]
  \centering
\input{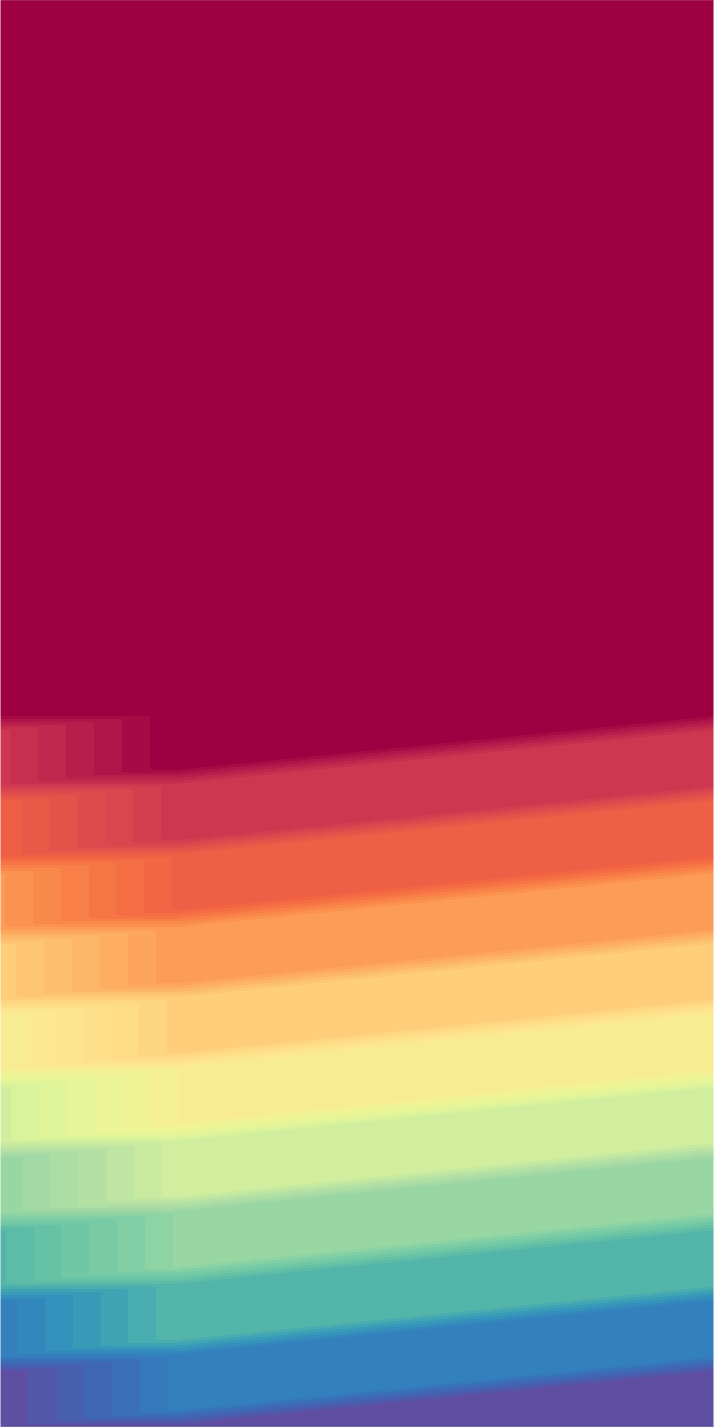}
  \caption{Evolution of the solution for transport equation}
  \label{fig:evo_transport}
\end{figure}
Note that Theorem \ref{thm:Transport} is a semi-global result, as it
is valid for any initial datum $u_0$ whose $L^\infty(\T)$-norm is
bounded by the constant $K$, which is chosen in \eqref{eq:def-K} to
guarantee that the transport phenomenon really occurs. 
\smallbreak

Our proof of Theorem \ref{thm:Transport} relies on Lyapunov
functionals, in a similar spirit as the one developed in Section
\ref{sec:partout} to address the proof of
Proposition~\ref{prop:partout}. However, as the transport phenomenon
is now essential to the 
decay process, we will introduce some weights in space in the
functional. This approach is
 inspired by some recent works on the stabilization of
hyperbolic systems of conservation laws, namely
\cite{Coron-Andrea-Bastin-IEEE-2007,Coron-Bastin-Andrea-SICON-2008}
(see the recent book \cite{Bastin-Coron-book} for further
references). 
\smallbreak

In the case 
\begin{equation}
	\label{eq:ConditionPourrie}
	f'(0) = 0, 
\end{equation}
corresponding for instance to the celebrated example of Burgers equation
\begin{equation}
	\label{eq:flux=Burgers}
	f(u) = \frac{u^2}{2}, 
\end{equation}
the transport phenomenon competes with the dissipation of the
solution, as the smaller the solution is, the slower the characteristics propagate. Our goal
thus is to understand the interplay between these phenomena. 
\begin{theorem}
	\label{thm:Burgers-Main}
	Let $f$ be a smooth function such that 
	\begin{equation}
		\label{Cond-Flux}
		f'(0) = 0 
		\quad \hbox{ and } \quad 
		\exists K >0, \quad \inf_{s \in [-K,K]} |f''(s) | >0. 
	\end{equation}
	Assume that the damping profile $a = a(x)$ satisfies Assumption \ref{hyp:a} and \eqref{eq:local}. 
	Then, for any initial datum $u_0 \in L^\infty (\T)$ satisfying \eqref{eq:ci-small}, 
	the solution $u$ of \eqref{eq:flux-general}--\eqref{eq:ci} satisfies the following property: There exist a time $t_*>0$ and a constant $C>0$ such that for all $t \ge t_*$, there exists an open subinterval $\omega(t) \subset \omega$ such that $u(t)_{\mid \omega(t)} = 0$, and, for all $t \geq t_*$,
	\begin{equation*}
		|\omega \setminus \omega(t) | \le \frac{C}{t^{1+ \alpha}}, 
		\quad 
		\| u(t) \|_{L^\infty(\T)} \le \frac{C}{t}.
	\end{equation*} 
\end{theorem}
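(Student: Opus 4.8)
The plan is to combine the transport mechanism analysed in Theorem \ref{thm:Transport} with the ODE-type decay of Proposition \ref{prop:partout}, and to carefully quantify how the vanishing of $f'(0)$ slows down the characteristics, which produces the algebraic (rather than finite-time) uniform decay and the shrinking boundary layer. First, by the comparison principle (Proposition \ref{prop:comparaison}, part 1) and the invariance of the equation under $u\mapsto -u$ together with $f(u)\mapsto -f(-u)$, it suffices to treat nonnegative data, and in fact to bound $u$ from above by the solution $\bar u$ associated with the damping coefficient $\delta\1_\omega\le a$ (Proposition \ref{prop:comparaison}, part 2). So from now on I work with $a=\delta\1_\omega$ and $0\le u_0\le K$, and the goal becomes an upper bound $\|u(t)\|_{L^\infty}\le C/t$ together with the statement about $\omega(t)$. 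The uniform bound $\|u(t)\|_{L^\infty(\T)}\le K$ holds for all time by Proposition \ref{prop:comparaison}.

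The uniform algebraic decay $\|u(t)\|_{L^\infty(\T)}\le C/t$ I would obtain by a weighted $L^p$ Lyapunov argument, as announced in the paragraph after Theorem \ref{thm:Transport}, but now with a time-dependent weight. The idea is to multiply \eqref{eq:flux-general} by $p u |u|^{p-2}\varphi(x)$ for a well-chosen positive weight $\varphi\in\cont^\infty(\T)$ and integrate: the flux term contributes $-\int \varphi'(x)\, Q_p(u)\,dx$ where $Q_p$ is a primitive of $p(p-1)|u|^{p-2}f'(u)$, and the damping term contributes $-p\delta\int_\omega \varphi |u|^p\,dx$. Under \eqref{Cond-Flux}, $|f'(s)|\le C|s|$ on $[-K,K]$, so $|Q_p(u)|\le C p|u|^{p+1}$; choosing $\varphi$ so that $\varphi'$ is supported in $\omega$ and $\varphi'\le c\,\varphi$ there (e.g. $\varphi$ increasing across $\omega$ in the direction of transport) lets the flux term be absorbed up to a factor involving $\|u(t)\|_{L^\infty}$, which is $\le K$ but can be taken $\le\eta(t)\to 0$ once a crude decay is known. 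This yields, after letting $p\to\infty$, a differential inequality of the form
\[
\frac{d}{dt}\|u(t)\|_{L^\infty(\T)}\le -c\,\delta\,\|u(t)\|_{L^\infty(\T)}^{1-\alpha}+C\|u(t)\|_{L^\infty(\T)}\cdot\|u(t)\|_{L^\infty(\T)},
\]
but the honest version is more subtle because the weight only sees $\omega$; the correct mechanism is that the transport carries the sup-norm into $\omega$ in time $O(\|u\|_{L^\infty})$ (characteristics have speed $f'(u)\sim u$), so over a window of length $\sim u$ the solution loses, by the ODE \eqref{eq:ode-explicit} run for time $\sim u$, a definite fraction of its size: $u(t+cu(t))\le u(t)(1-c')$ roughly, i.e. $u$ halves after a time proportional to its current value, which integrates to $u(t)\lesssim 1/t$.

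For the local finite-time extinction and the boundary layer, once $\|u(t)\|_{L^\infty}\le C/t$, I would run the finite-speed-of-propagation/characteristics argument inside $\omega$: a point $x\in\omega$ at distance $d$ from the downstream end of $\omega$ is, by finite propagation speed $\le \sup_{|s|\le C/t}|f'(s)|\le C'/t$, influenced only by data that entered $\omega$ at least a time $\sim dt$ earlier, during which it was subject to the full damping $\delta\1_\omega$ and hence, by \eqref{eq:ode-explicit}, is extinguished provided the residence time $\sim dt$ exceeds the extinction time $\sim(C/t)^\alpha$ of the ODE; this forces $d\gtrsim (C/t)^\alpha/t = C/t^{1+\alpha}$, giving the extinction set $\omega(t)$ with $|\omega\setminus\omega(t)|\le C/t^{1+\alpha}$. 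Making "residence time" precise requires a Kruzhkov-type comparison of $u$ with the spatially homogeneous ODE solution on characteristic tubes, using that inside $\omega$ the equation is $\partial_t u+\partial_x f(u)=-\delta u/|u|^\alpha$ and the incoming flux through the upstream boundary of $\omega$ is itself controlled by $C/t$.

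The main obstacle I expect is making the heuristic "the sup-norm is transported through $\omega$ in time proportional to its size, and loses a fixed fraction each time" into a rigorous closed differential (or difference) inequality for $\|u(t)\|_{L^\infty}$: the weighted-$L^p$ computation only controls $\int_\omega\varphi|u|^p$ against the boundary flux terms, and closing the loop needs a quantitative statement that the spatial maximum cannot persist outside $\omega$ for longer than $O(\|u\|_{L^\infty})$ without being refreshed from $\omega$ — essentially a maximum-principle/characteristic argument compatible with the entropy-solution framework. Coupling this with the simultaneous proof of the boundary-layer estimate, where one must track how the downstream part of $\omega$ stays empty, is the delicate bookkeeping; everything else (comparison reductions, the $p\to\infty$ limit, the ODE integration \eqref{eq:ode-explicit}) is routine given the results already established.
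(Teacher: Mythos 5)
Your opening reduction (replace $a$ by $\delta\1_\omega$ via Proposition \ref{prop:comparaison}) is the right first move, and your scaling heuristics are exactly the correct orders of magnitude: speed $f'(u)\sim u$ means the sup-norm is refreshed from $\omega$ only on a time scale $\sim 1/u$, each passage through $\omega$ removes a fixed fraction, hence $u\sim 1/t$; and balancing the residence time in $\omega$ against the ODE extinction time $\sim\|u\|_{L^\infty}^\alpha$ gives a layer of width $t^{-(1+\alpha)}$. But the central mechanism of a proof is missing, and you acknowledge this yourself. The weighted-$L^p$ route cannot close here: the displayed inequality $\frac{d}{dt}\|u\|_{L^\infty}\le -c\delta\|u\|_{L^\infty}^{1-\alpha}+C\|u\|_{L^\infty}^{2}$ would force finite-time extinction of the sup-norm once it is small (the first term dominates), which contradicts the very statement being proved — the solution is undamped on $\T\setminus\omega$ and only decays like $1/t$ there. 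The Lyapunov strategy of Section \ref{sec:transport} genuinely breaks down when $f'(0)=0$ because there is no uniform lower bound on $|f'|$, and this is precisely why the paper abandons it for this theorem.

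What the paper does instead, and what your proposal lacks, is to reduce by comparison to the two constant initial data $\pm K$ and then to construct the corresponding solution of \eqref{eq:sol-u-K} essentially explicitly by characteristics (Lemma \ref{Lem-Reg-u0}): one proves it is continuous, piecewise $\mathscr{C}^1$ and monotone in $x$ on $[A,1]$, identifies the zero set $\mathscr{Z}$ and its boundary curve $\mathcal{C}$ in closed form, and then reads off both $\|u(t)\|_{L^\infty}\le C/t$ (from $x_0+(t-t_*)f'(u(t_*,x_0))=1$, whence $f'(u(t,1))\le 1/(t-t_*)$) and $|\omega\setminus\omega(t)|\le C/t^{1+\alpha}$ (from the explicit parametrization of $\mathcal{C}$); the general datum is then sandwiched between $u_-$ and $u_+$. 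Your proposed substitute — a ``Kruzhkov-type comparison of $u$ with the spatially homogeneous ODE solution on characteristic tubes'' — is exactly the step that cannot be done abstractly: the ODE profile is not a supersolution on a tube unless one controls the incoming lateral flux from upstream, which is the whole difficulty and is what the explicit construction resolves. So the proposal identifies the right answer and the right physics, but the proof's core (explicit characteristics for constant data plus the sandwich $u_-\le u\le u_+$) is absent and is not replaced by a workable alternative.
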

The proof of Theorem \ref{thm:Burgers-Main} is based on a precise description of the solution corresponding to $u_0 = K$, where $K$ is constant, $\T$ is identified with $(0,1)$ with periodic boundary conditions, and 
\begin{equation}
	\label{eq:damping-profile}
	a(x) = \delta \1_{\omega}, \hbox{ with } \omega = (0, A). 	
\end{equation}
We can then deduce Theorem~\ref{thm:Burgers-Main} from a simple
comparison argument based on Proposition~\ref{prop:comparaison}. 
\bigbreak

It is clear from these results that there still are cases which are
not covered from our results, in particular cases in which the initial
datum has an $L^\infty(\T)$-norm which is larger than the best $K$ in
\eqref{eq:def-K} or \eqref{Cond-Flux}. Some particular instance is
numerically studied in Section \ref{sec:num}, as well as other models
for which the complete understanding of a (localized) strong friction on the
dynamics of a system is still not well understood.  

\subsection{Outline of the paper}

The Cauchy problem is addressed in Section~\ref{sec:cauchy}, where
Propositions~\ref{prop:cauchy} and \ref{prop:comparaison} are
established.  The proof of Proposition~\ref{prop:partout} (finite time
extinction with damping everywhere) is given in
Section~\ref{sec:partout}, thanks to suitable Lyapunov functionals.
Theorem~\ref{thm:Transport} (finite time 
extinction by transport) is proved in Section~\ref{sec:transport}, by
introducing refined Lyapunov functionals. For the case of the
generalized Burgers equation, Theorem~\ref{thm:Burgers-Main}, a longer
Section~\ref{sec:burgers} is needed, where we first construct a rather
explicit 
solution by following characteristics, which  then turns out to be the
solution provided by Proposition~\ref{prop:cauchy}.  Finally,
Section~\ref{sec:num} provides numerical illustrations in the case of
\eqref{eq:flux-general} studied in this paper, as well as in the case
of other equations for which the corresponding analysis turns out to
be a challenging issue. 

%%% Local Variables:
%%% mode: latex
%%% TeX-master: t
%%% End:

\subsection*{Acknowledgements}  
The authors wish to thank Jean-Fran\c cois Coulombel, Fr\'ed\'eric
Lagouti\`ere,  Philippe Lauren\c cot and Vincent Perrollaz  for
fruitful discussions.

\section{Cauchy problem and comparison principle}
\label{sec:cauchy}

In this section, we prove Propositions~\ref{prop:cauchy} and
\ref{prop:comparaison}. 

We consider more generally the Cauchy problem
\begin{equation}
  \label{eq:existence}
  \d_t u+\d_x f(u)+h(x,u)=0 \text{ in } \R_+ \times \T, \quad u_{\mid
    t=0}=u_0 \text{ in } \T,
\end{equation}
with a fairly general semilinear term $h$, possibly depending on $x$,
in order to generalize the nonlinearity, typically of the form 
\begin{equation}\label{eq:notre-h}
	h_\alpha(x,u)=a(x)\frac{u}{|u|^\alpha},
\end{equation}
where $\alpha \leq 1$ corresponds to the case of \eqref{eq:flux-general}, with the
modification detailed in Definition~\ref{def:solution-alph=1} in the case
$\alpha=1$. We will use the following properties on the
source term $h$, which encompass the framework of
Definition~\ref{def:solution} when $\alpha < 1$.
\begin{assumption}\label{hyp:gen}
  The map $h=h(x,u)$ satisfies:
  \begin{itemize}
\item $h\in L^\infty_{\rm loc}(\T\times\R)$.
  \item For all $u\in \R$, $h(x,u)u\ge 0$, for almost all $x\in \T$.
\item For almost all fixed $x\in \T$, the map $u\mapsto h(x,u)$ is
  nondecreasing on $\R$.
\item For every $R>0$,
  \begin{equation*}
    \sup_{|u|\le R} \sup_{y >0} \frac{1}{y} \int_\T\left| h(x+y,u)-h(x,u)\right|dx < \infty. 
  \end{equation*}
  \end{itemize}
\end{assumption}
In the case \eqref{eq:notre-h}, the first property corresponds to the
assumption $a\in L^\infty(\T)$, the second property to the
fact that $h$ is a damping term, the third property is
straightforward (even in the case $\alpha=1$ with the approach of
Filippov), and the last property is a consequence of 
Assumption~\ref{hyp:a}. 

\begin{definition}[Notion of solution]\label{def:solution-gen}
 Let $h$ satisfy Assumption \ref{hyp:gen}. A bounded measurable function $u$
  on $[0,T]\times\T$ is an \emph{admissible weak solution} of
  \eqref{eq:existence}, with $u_0\in L^\infty(\T)$,
  if the inequality 
  \begin{equation}\label{eq:weak-sol}
     \int_0^T\!\!\!\!\int_\T \(\d_t \psi \eta(u)+\d_x \psi q(u)- \psi
   \eta'(u) h(x,u) \)dxdt+\int_\T
    \psi(0,x)\eta\(u_0(x)\)dx\ge 0
  \end{equation}
holds for every convex function $\eta\in W^{1,\infty}$, with $q'=f'\eta'$, and all
nonnegative Lipschitz continuous test function $\psi$ on $[0,T]\times
\T$. 
\end{definition}

\smallbreak   

Propositions~\ref{prop:cauchy} and \ref{prop:comparaison} stem  from
the following result, which  is slightly more general in view of
Assumption~\ref{hyp:gen}. 

\begin{proposition}\label{prop:cauchy-gen}
  Let  Assumption~\ref{hyp:gen} be
  satisfied.\\
 $(i)$  Let $u_0\in L^\infty(\T)$. There
  exists a unique, global, admissible weak 
  solution $u$ of \eqref{eq:existence}, $u\in
  \mathscr{C}^0(\R_+;L^1(\T))$. \\
 $(ii)$ Let $u$ and $v$ be solutions of
 \eqref{eq:existence} with 
 respective initial data $u_0,v_0\in L^\infty(\T)$, and $u_0(x)\le v_0(x)$ for almost
 all $x\in \T$. Then
 \begin{equation}
 	\label{Comparison-Gale}
   u(t,x)\le v(t,x),\quad \forall t\ge 0,  \ \text{ a.e. } x\in \T. 
 \end{equation}
	Besides,
\begin{equation} 
	\label{ComparisonCstts}
			|u(t,x)| \leq \| u_0 \|_{L^\infty(\T)}, \quad
                        \forall t\ge 0,\ 
                        \text{a.e. } x\in \T. 
\end{equation}
$(iii)$ If $h^{(1)}$ and $h^{(2)}$ satisfy Assumption~\ref{hyp:gen}, and in
addition,
\begin{equation*}
 (0\le )\,   h^{(1)}(x,u)\le  h^{(2)}(x,u),\quad \text{a.e. }(x,u)\in \T\times (0,\infty),
\end{equation*}
then denoting by
        $u_1$ and $u_2$ the respective solutions to
        \eqref{eq:existence} with the same initial
        datum $u_0 \in L^\infty(\T)$, $u_0\ge 0$, we have 
		 \begin{equation}
		 \label{Comparisons-u1-u2}
			   u_1(t,x)\ge u_2(t,x)\ge 0,\quad
\forall t\ge 0,  \ \text{ a.e. } x\in \T. 
		 \end{equation}
$(iv)$ The same properties hold true for solutions of
\eqref{eq:flux-general}--\eqref{eq:ci} for $\alpha=1$, when
considering solutions in the sense of Definition
\ref{def:solution-alph=1} and $a$ satisfies Assumption \ref{hyp:a}. 
\end{proposition}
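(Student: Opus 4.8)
The plan is to prove Proposition~\ref{prop:cauchy-gen} by the vanishing viscosity method, essentially following the Kru\v{z}kov / Dafermos scheme as in \cite{Da00}, and then to check that the sublinear damping term is harmless at each step. First I would regularize: for $\nu>0$ and a mollified, truncated source term $h_\nu$ (smooth in $u$, still satisfying the monotonicity and sign conditions uniformly in $\nu$), consider the parabolic problem
\begin{equation*}
  \d_t u^\nu + \d_x f(u^\nu) + h_\nu(x,u^\nu) = \nu \d_{xx} u^\nu,\quad u^\nu_{\mid t=0}= u_0^\nu,
\end{equation*}
with $u_0^\nu$ a smooth approximation of $u_0$. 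Standard parabolic theory gives a global smooth solution, and the $L^\infty$ bound $\|u^\nu(t)\|_{L^\infty}\le \|u_0\|_{L^\infty}$ follows from the maximum principle together with the sign condition $h(x,u)u\ge0$ (this is where the damping \emph{helps}). The fourth bullet of Assumption~\ref{hyp:gen} — the uniform control of the $x$-translates of $h$ in $L^1$ — is exactly what is needed to propagate a spatial $BV$ (or space-translation) estimate: differentiating, or rather translating in $x$ and using Kru\v{z}kov-type doubling, one bounds $\int_\T |u^\nu(t,x+y)-u^\nu(t,x)|\,dx$ by the initial one plus $t$ times the constant in Assumption~\ref{hyp:gen}; a time-translation estimate then follows from the equation. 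Helly's theorem yields a subsequence converging in $\cont^0([0,T];L^1(\T))$ to some $u$, and passing to the limit in the regularized entropy inequalities (with $\eta$ convex, $q'=f'\eta'$) gives \eqref{eq:weak-sol}, proving existence in $(i)$.

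For uniqueness and the comparison principle $(ii)$ — which I would prove first, since uniqueness is a special case — the tool is Kru\v{z}kov's doubling of variables applied to $\eta(\cdot)=(\cdot - k)^+$ type entropies, or more directly the $L^1$-contraction argument comparing two solutions $u,v$. The new feature is the term $-\psi\,\eta'(u)\,h(x,u)$ in the entropy inequality; when one forms the standard Kru\v{z}kov inequality for $|u-v|$, the damping contributes a term of the form $-\operatorname{sgn}(u-v)\,(h(x,u)-h(x,v))$, which is $\le 0$ precisely because $u\mapsto h(x,u)$ is nondecreasing (third bullet of Assumption~\ref{hyp:gen}). Hence it can only improve the Gronwall estimate, and one gets $\|(u-v)^+(t)\|_{L^1}\le \|(u_0-v_0)^+\|_{L^1}$, which gives both the ordering \eqref{Comparison-Gale} (taking $u_0\le v_0$) and uniqueness (taking $u_0=v_0$); \eqref{ComparisonCstts} follows by comparing with the constant solutions $\pm\|u_0\|_{L^\infty}$, which solve the equation thanks to the sign condition on $h$. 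For $(iii)$ I would run the same comparison but between solutions of two different equations with source terms $h^{(1)}\le h^{(2)}$: first note $u_1,u_2\ge0$ by comparison with the zero solution (using $h^{(j)}(x,0)=0$, implied by $h(x,u)u\ge0$ and monotonicity, at least after choosing the Filippov selection at $u=0$), then the doubling argument produces an extra good term $\int \operatorname{sgn}(u_1-u_2)(h^{(2)}(x,u_2)-h^{(1)}(x,u_1))$ which, on the set $\{u_1<u_2\}$ and using $h^{(1)}\le h^{(2)}$ and monotonicity of $h^{(2)}$, has the right sign to conclude $u_1\ge u_2$. Continuity in time $u\in\cont^0(\R_+;L^1)$ comes from the time-translation estimate established in the construction, upgraded via the $L^1$-contraction.

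The main obstacle is part $(iv)$, the case $\alpha=1$, where $h_\alpha(x,u)=a(x)\operatorname{Sign}(u)$ is genuinely set-valued at $u=0$ and Definition~\ref{def:solution-alph=1} asks for a measurable selection $h(t,x)\in a(x)\operatorname{Sign}(u(t,x))$. Here I would follow Filippov's approach: approximate $\operatorname{Sign}$ by the smooth monotone sublinear damping with $\alpha\to 1$ (or by $u/\sqrt{u^2+\eps^2}$), each of which satisfies Assumption~\ref{hyp:gen} with constants uniform in the regularization; obtain $u$ as the $\cont^0([0,T];L^1)$ limit of the corresponding solutions $u_\eps$, which exist and are unique by parts $(i)$–$(ii)$. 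The damping terms $h_\eps=a(x)\,\sigma_\eps(u_\eps)$ are bounded in $L^\infty((0,T)\times\T)$ by $\|a\|_{L^\infty}$, so along a subsequence $h_\eps\rightharpoonup h$ weakly-$*$; one passes to the limit in the linear PDE $\d_t u_\eps+\d_x f(u_\eps)+h_\eps=0$ and in the entropy inequality \eqref{AdmissibleWeak-Identity-alph=1}. The delicate point is the inclusion $h(t,x)\in a(x)\operatorname{Sign}(u(t,x))$ a.e.: on $\{u\neq0\}$ this is immediate from a.e. convergence of $u_\eps$ and of $\sigma_\eps$ away from $0$, while on $\{u=0\}$ one only needs $|h|\le a(x)$, which survives the weak limit; the uniqueness of the full triple $(u,h)$ then follows because the comparison argument of $(ii)$ only uses the monotonicity of the graph $u\mapsto a(x)\operatorname{Sign}(u)$, which holds for the limiting inclusion, so $u$ — hence, by re-reading the equation, $h=-\d_t u-\d_x f(u)$ — is unique. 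All the remaining assertions of $(ii)$–$(iii)$ for $\alpha=1$ are inherited by passing to the limit in the corresponding inequalities for $u_\eps$.
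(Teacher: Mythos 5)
Your proposal is correct and follows the same overall strategy as the paper: vanishing viscosity for existence, an $L^\infty$ bound from the sign condition, spatial compactness from the last bullet of Assumption~\ref{hyp:gen} (you rightly single out that the translate of a solution is no longer a solution, so the translation estimate on $h$ is exactly what compensates), Kru\v{z}kov doubling combined with the monotonicity of $u\mapsto h(x,u)$ for uniqueness, and a regularization of $\Sign$ for $\alpha=1$. The differences are in implementation rather than substance. For the comparison principle, the paper proves \eqref{Comparison-Gale} at the viscous level, multiplying the equation for the difference of two viscous solutions by the derivative of a smoothed positive part and passing to the limit (see \eqref{eq:monotonieA}), whereas you run the doubling argument directly between two entropy solutions with half-Kru\v{z}kov entropies; both work, yours avoids one extra limit, the paper's keeps all sign manipulations at the smooth level. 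For item $(iv)$ the paper takes a joint limit, solving \eqref{eq:viscosite-mu-1} with damping exponent $1-\mu$ and viscosity $\mu$ and letting $\mu\to0$ once, while you first invoke items $(i)$--$(iii)$ for the regularized monotone damping $a(x)\sigma_\eps(u)$ and then let $\eps\to0$, with $h_\eps$ converging weak-$*$ and the inclusion $h\in a\,\Sign(u)$ recovered as you describe ($|h|\le a$ survives the weak limit on $\{u=0\}$); this two-step route is legitimate provided the translation estimates are uniform in $\eps$, which they are. Two small slips, neither load-bearing: the constants $\pm\|u_0\|_{L^\infty(\T)}$ are super- and sub-solutions, not solutions, since $h(x,\pm\|u_0\|_{L^\infty(\T)})$ need not vanish (your maximum-principle bound at the viscous level already yields \eqref{ComparisonCstts}, which is essentially how the paper gets it via \eqref{eq:apriorivisc}); and Assumption~\ref{hyp:gen} does not force $h(x,0)=0$, so in $(iii)$ the nonnegativity of $u_1,u_2$ is better obtained, as in the paper, by comparing with the solution issued from the zero datum, which vanishes identically by \eqref{ComparisonCstts}.
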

\begin{proof}
  When Assumption \ref{hyp:gen} is satisfied, we follow very closely
  the approach of \cite[Section~6.3]{Da00}, 
  based on the method of vanishing viscosity.
\smallbreak

\noindent{\bf  $\bullet$ Uniqueness.} For $u$ and $\uu$ two solutions
in the sense of Definition~\ref{def:solution-gen}, introduce the
entropy-entropy flux pair
\begin{equation*}
  \eta(u,\uu)=|u-\uu|,\quad q(u,\uu) =\mathrm{sign}
  (u-\uu)\(f(u)-f(\uu)\). 
\end{equation*}
This is an entropy-entropy flux pair for $u$ when $\uu$ is
fixed, and, conversely, for $\uu$ when $u$ is fixed. For a
nonnegative test function $\phi=\phi(t,x,\ut,\ux)$,
following successively the two  points of view mentioned above, we have,
\begin{align*}
  \int_0^T\int_\T &\(\d_t \phi (t,x,\ut,\ux)\eta
  \(u(t,x),\uu (\ut,\ux)\) +\d_x \phi  (t,x,\ut,\ux) q\(u(t,x),\uu
                    (\ut,\ux)\)\)dxdt\\
 & -\int_0^T\int_\T \phi
                    (t,x,\ut,\ux)\mathrm{sign}\( 
  u(t,x)-\uu (\ut,\ux)\)  h\(x,u(t,x)\)dxdt\\
&
+\int_\T \phi (0,x,\ut,\ux)\eta\(u_0(x),\uu
  (\ut,\ux)\)  dx\ge 0, \quad (\ut, \ux) \in \R_+ \times \T, 
\end{align*}
\begin{align*}
  \int_0^T\int_\T &\(\d_{\ut} \phi (t,x,\ut,\ux)\eta
  \(u(t,x),\uu (\ut,\ux)\) +\d_{\ux} \phi  (t,x,\ut,\ux) q\(u(t,x),\uu
                    (\ut,\ux)\)\)d\ux d\ut\\
 & -\int_0^T\int_\T \phi
                    (t,x,\ut,\ux)\mathrm{sign}\( 
  u(t,x)-\uu (\ut,\ux)\)  h\(x,u(t,x)\)d\ux d\ut\\
&
+\int_\T \phi (t,x,0,\ux)\eta\(u(t,x),\uu_0
  (\ux)\)  d\ux\ge 0,  \quad (t, x) \in \R_+ \times \T. 
\end{align*}
Integrating each of these inequalities with respect to the last two
variables and summing up the two resulting inequalities yields
\begin{align*}
 & \int_0^T\!\!\!\int_0^T\!\!\!\iint_{\T^2} \(\d_t+\d_{\ut}\)  \phi (t,x,\ut,\ux)\eta
  \(u(t,x),\uu (\ut,\ux)\)   dxd\ux dtd\ut \\
&+\int_0^T\!\!\!\int_0^T\!\!\!\iint_{\T^2} \(\d_x+\d_{\ux}\)  \phi (t,x,\ut,\ux)q
  \(u(t,x),\uu (\ut,\ux)\)   dxd\ux dtd\ut \\
&- \int_0^T\!\!\!\int_0^T\!\!\!\iint_{\T^2} \phi
                    (t,x,\ut,\ux) \mathrm{sign}\( 
  u(t,x)-\uu (\ut,\ux)\) \( h\(x,u(t,x)\) - h\(
   \ux,\uu(\ut,\ux)\) \)dxd\ux dtd\ut\\
& +\int_0^T\!\!\!\iint_{\T^2} \phi (0,x,\ut,\ux)\eta\(u_0(x),\uu
  (\ut,\ux)\)  dxd\ux d\ut \\
&+\int_0^T\!\!\! \iint_{\T^2} \phi (t,x,0,\ux)\eta\(u(t,x),\uu_0
  (0,\ux)\)  dxd\ux dt\ge 0.
\end{align*}
Pick $\phi$ of the form
\begin{equation*}
  \phi(t,x,\ut,\ux) = \frac{1}{\eps^2}\psi\(\frac{t+\ut}{2}\)
  \rho\(\frac{t-\ut}{2\eps}\)\rho\(\frac{x-\ux}{2\eps}\) ,
\end{equation*}
where the nonnegative function $\psi$ depends only on time,  and the
nonnegative compactly supported function $\rho$ is such that $\int_\R
\rho =1$,  with an obvious abuse of notation for the last factor
above. 
Letting $\eps\to 0$, we find:
\begin{align*}
  &\int_0^T \int_\T \psi'(t) \eta\(u(t,x),\uu(t,x)\) dxdt\\
&-
\int_0^T\int_{\T} \psi(t) \mathrm{sign}\( 
  u(t,x)-\uu (t,x)\) \( h\(x,u(t,x)\) - h\(
  x,\uu(t,x)\) \)dx dt\\
&+\int_\T
    \psi(0)\eta\(u_0(x),\uu_0(x)\)dx\ge 0. 
\end{align*}
In view of the third point in Assumption~\ref{hyp:gen}, this implies
\begin{equation*}
  \int_0^T \int_\T \psi'(t) \eta\(u(t,x),\uu(t,x)\) dxdt+\int_\T
    \psi(0)\eta\(u_0(x),\uu_0(x)\)dx\ge 0. 
\end{equation*}
For $0<\tau< T$ and $k\in \N$, we define $\psi=\psi_k$ as
\begin{equation*}
  \psi_k(t)=
\left\{
  \begin{aligned}
    1&\text{ if }0\le t<\tau,\\
k(\tau-t)+1&\text{ if } \tau\le t<\tau+\frac{1}{k},\\
0&\text{ if }\tau+\frac{1}{k}\le t <T. 
  \end{aligned}
\right.
\end{equation*}
Letting $k\to \infty$ now yields
\begin{equation*}
  \int_\T
    \eta\(u_0(x),\uu_0(x)\)dx\ge \int_\T
    \eta\(u(\tau,x),\uu(\tau,x)\)dx,
\end{equation*}
that is $\|u(\tau)-\uu(\tau)\|_{L^1(\T)}\le \|u_0-\uu_0\|_{L^1(\T)}$,
hence uniqueness for solutions in the sense of
Definition~\ref{def:solution-gen}, since $\tau\in (0,T)$ is arbitrary.
\smallbreak

\noindent {\bf  $\bullet$ Viscous approximation.} For $\mu>0$, consider the equation
\begin{equation}
  \label{eq:viscosite}
  \d_t u_\mu+\d_x f(u_\mu)+h(x,u_\mu)=\mu
\d_x^2 u_\mu \text{ in } \R_+ \times \T, \quad u_{\mu\mid t=0}=u_0 \text{ in } \T.
\end{equation}
For a fixed $\mu>0$, the solution to \eqref{eq:viscosite} is obtained
by a fixed point argument applied to the associated Duhamel's formula,
\begin{equation}
  \label{eq:duhamel-visc}
  u_\mu(t,x) = e^{\mu t\d_x^2}u_0(x) -\int_0^t e^{\mu
    (t-s)\d_x^2}\(\d_xf(u_\mu(s,x))+h\(x,u_\mu(s,x)\)\)ds,
\end{equation}
where we recall that the heat semigroup on $\T$ acts on Fourier series as
\begin{equation*}
  e^{\mu t\d_x^2}\(\sum_{n\in \Z} a_n e^{i2\pi nx}\) = \sum_{n\in \Z} a_n
  e^{i2\pi nx-4\mu \pi^2 n^2 t}.
\end{equation*}
Up to considering the linear heat flow on $\R$ and its uniqueness
property to relate it to the heat flow on $\T$, we readily see that if
$u_0\in L^\infty(\T)$, there exists $T>0$ depending on
$\|u_0\|_{L^\infty(\T)}$ only and a unique solution $u_\mu\in
  \mathscr{C}^0([0,T];L^\infty(\T))$ to \eqref{eq:duhamel-visc}.
\smallbreak
 
\noindent{\bf  $\bullet$ A priori estimate}. The solution is
  global in time, $u_\mu \in
  \mathscr{C}^0(\R_+;L^\infty(\T))$, in view of the a priori estimate
  \begin{equation}\label{eq:apriorivisc}
    \|u_\mu(t)\|_{L^\infty(\T)}\le \|u_0\|_{L^\infty(\T)},\quad
    \forall t\ge 0,
  \end{equation}
which we now establish. For $p>2$, multiply \eqref{eq:viscosite} by
$|u_\mu|^{p-2}u_\mu$, and integrate over $\T$. This yields
\begin{align*}
  \frac{1}{p}\frac{d}{dt}\(\int_\T |u_\mu|^p \) +\int_\T
  f'(u_\mu)|u_\mu|^{p-2}u_\mu\d_x u_\mu &= -\int_\T
  h(x,u_\mu)|u_\mu|^{p-2}u_\mu\\
&\quad - \mu (p-1)\int_\T
  |u_\mu|^{p-2}(\d_xu_\mu)^2.
\end{align*}
If $g$ is such that $g'(y)=p f'(y)|y|^{p-2}y$, the second term on the
left hand side is
\begin{equation*}
  \int_\T g'(u_\mu)\d_x u_\mu = \int_\T \d_x g(u_\mu)=0.
\end{equation*}
The first term on the right hand side is non-positive in view of
Assumption~\ref{hyp:gen}, and the last term is obviously
non-positive. We infer, for all $p>2$,
\begin{equation*}
  \frac{d}{dt}\|u_\mu\|_{L^p(\T)}^p\le 0,
\end{equation*}
hence \eqref{eq:apriorivisc} by integrating in time and letting $p\to \infty$. 
\smallbreak

\noindent {\bf  $\bullet$ Entropy solution for \eqref{eq:existence}.} The next step
consists in showing that up to extracting a subsequence, 
$u_\mu$ converges to an entropy solution
(Definition~\ref{def:solution-gen}). The above uniqueness result shows that
actually, no extraction is needed. 
\smallbreak

 If $\eta$ is a
smooth convex entropy, with associated entropy flux $q$, multiplying
\eqref{eq:viscosite} by $\eta'(u_\mu)$ yields 
\begin{equation*}
  \d_t \eta (u_\mu)+\d_x q(u_\mu) + \eta'(u_\mu)h(x,u_\mu) =\mu \d_x^2
  \eta(u_\mu) -\mu \eta''(u_\mu)\(\d_x u_\mu\)^2. 
\end{equation*}
Multiply the above equation by a nonnegative  test function $\psi$ then
yields, after an integration by parts and since $\eta''\ge 0$,
\begin{align}
  \int_0^T\int_\T &\(\d_t \psi \eta(u_\mu) +\d_x \psi q(u_\mu) -\psi
  \eta'(u_\mu)h(x,u_\mu)\)dxdt +\int_\T \psi(0,x)\eta(u_0(x))dx \notag\\
&\quad \ge
  -\mu \int_0^T\int_\T \d_x^2 \psi \eta(u_\mu)dxdt. \label{EntropyViscous}
\end{align}
The same holds when we just have $\eta\in W^{1,\infty}$ by using an
approximating entropy, as in the next paragraph.\\
Therefore, if for some
sequence $(\mu_k)$ with $\mu_k \downarrow 0$, $(u_{\mu_k})_k$
converges to some function $u$, boundedly almost everywhere on
$[0,\infty)\times \T$, then $u$ is an admissible weak solution of
\eqref{eq:existence} on $[0,\infty)\times \T$. 
\smallbreak

\noindent {\bf $\bullet$  Comparison for the viscous solution.} 
Introduce as in the proof of \cite[Theorem~6.3.2]{Da00} the function
$\eta_\eps$ defined for $\eps>0$ by
\begin{equation*}
  \eta_\eps(w)= 
  \begin{cases}
    0 & \text{ for }-\infty<w\le 0,\\
\frac{w^2}{4\eps} &\text{ for } 0<w\le 2\eps,\\
w-\eps& \text{ for } 2\eps<w<\infty. 
  \end{cases}
\end{equation*}
If $u_\mu$ and $\bar u_\mu$ solve \eqref{eq:duhamel-visc}, then by multiplying by $\eta_\eps'(u_\mu-\bar u_\mu)$ the equation
satisfied by $u_\mu-\bar u_\mu$, we compute
\begin{multline*}
    \d_t   \eta_\eps (u_\mu-\bar u_\mu) +\d_x \( \eta_\eps'(u_\mu-\bar u_\mu)\(f(u_\mu)-f(\bar
  u_\mu)\) \)
  \\
  -\eta_\eps''  (u_\mu-\bar u_\mu) \(f(u_\mu)-f(\bar   u_\mu)\)\d_x(u_\mu-\bar u_\mu)
    \\
  =
	- \eta_\eps'(u_\mu-\bar u_\mu)\(h(x,u_\mu)-h(x,\bar u_\mu)\) +\mu
  \d_x^2\eta_\eps(u_\mu-\bar u_\mu) -\mu \eta_\eps''(u_\mu-\bar u_\mu) \(\d_x(u_\mu-\bar
  u_\mu)\)^2. 
  \end{multline*}
The new term compared to the proof of \cite[Theorem~6.3.2]{Da00} is of
course the first term of the right hand side (where $h$ is
present). We have more precisely, for $0<s<t<\infty$, after
integration on $(s,t)\times \T$, 
\begin{equation}\label{eq:signe}
\begin{aligned}
 & \int_\T \eta_\eps (u_\mu(t)-\bar u_\mu(t)) -  \int_\T \eta_\eps (u_\mu(s)-\bar
  u_\mu(s)) \\
&\leq \int_s^t \!\!\int_\T \eta_\eps''(u_\mu-\bar u_\mu) \(f(u_\mu)-f(\bar
  u_\mu)\)\d_x(u_\mu-\bar u_\mu)
  \\
  &\quad -\int_s^t  \!\!\int_\T  \eta_\eps'(u_\mu-\bar
  u_\mu)\(h(x,u_\mu)-h(x,\bar u_\mu)\). 
\end{aligned}
\end{equation}
In the limit $\eps\to 0$, the first  term on the right hand side goes
to zero, while the second goes to 
\begin{equation*}
 - \int_s^t  \!\!\int_\T  {\mathbf 1}_{u_\mu>\bar u_\mu}\(h(x,u_\mu)-h(x,\bar u_\mu)\).
\end{equation*}
By Assumption~\ref{hyp:gen} (third point), this term is non-positive,
and we infer
\begin{equation*}
  \int_\T \(u_\mu(t,x)-\bar u_\mu(t,x)\)_+dx \le \int_\T \(u_\mu(s,x)-\bar
  u_\mu(s,x)\)_+dx. 
\end{equation*}
By letting $s\to 0$, this implies 
\begin{equation}\label{eq:monotonieA}
  \int_\T \(u_\mu(t,x)-\bar u_\mu(t,x)\)_+dx \le \int_\T \(u_0(x)-\bar
  u_0(x)\)_+dx,
\end{equation}
and by interchanging the roles of $u_\mu$ and $\bar u_\mu$,
\begin{equation*}
  \|u_\mu(t)-\bar u_\mu(t)\|_{L^1(\T)}\le \|u_0-\bar u_0\|_{L^1(\T)}.
\end{equation*}
Also,  if 
\begin{equation*}
  u_0(x)\le \bar u_0(x), \quad \text{a.e. on }\T,
\end{equation*}
then \eqref{eq:monotonieA} yields
\begin{equation*}
  u_\mu(t,x)\le \bar u_\mu(t,x), \quad  \forall t\ge 0,  \ \text{ a.e. } x\in \T. 
\end{equation*}
This implies in particular uniqueness for \eqref{eq:viscosite}. 
\smallbreak

\noindent {\bf  $\bullet$ Compactness.} 
To obtain compactness in space, as in \cite{Da00},
we consider $\bar u_\mu(t,x)= u_\mu(t,x+y)$. In the case where
$h=0$ (or more generally if $h$ depends on $u$ only), then $\bar u_\mu$ is
a solution to \eqref{eq:viscosite}, so \eqref{eq:monotonieA} can be
used directly. In our case, and precisely because  we want to consider
spatially localized damping, such $\bar u_\mu$ does not solve
\eqref{eq:viscosite}, and we have to resume the
computations. Essentially, we go back to the previous computations,
and replace $\bar u_\mu(t,x)$ with $u_\mu(t,x+y)$, noticing that $h(x,\bar u_\mu)$
has to be replaced by $h(x+y,u_\mu(t,x+y))$. We have
\begin{align*}
  & \int_\T  (u_\mu(t,x)- u_\mu(t,x+y))_+dx -  \int_\T (u_0(x)-u_0(x+y))_+dx \le \\
&\quad 
\limsup_{\eps \to 0}\(-\int_0^t  \!\!\int_\T  \eta_\eps'(u_\mu-\bar
  u_\mu)\(h(x,u_\mu(\tau,x))-h(x+y, u_\mu(\tau,x+y))\)d\tau dx\). 
\end{align*}
In the above integral, insert $\pm h(x+y,u_\mu(\tau,x))$. By the same
argument as above (third point in Assumption~\ref{hyp:gen}), we infer
\begin{align*}
   & \int_\T  (u_\mu(t,x)- u_\mu(t,x+y))_+dx -  \int_\T (u_0(x)-u_0(x+y))_+dx \le \\
&\quad 
\limsup_{\eps \to 0}\(-\int_0^t  \!\!\int_\T  \eta_\eps'(u_\mu-\bar
  u_\mu)\(h(x,u_\mu(\tau,x))-h(x+y, u_\mu(\tau,x))\)d\tau dx\),
\end{align*}
hence
\begin{multline*}
	\int_\T  (u_\mu(t,x)- u_\mu(t,x+y))_+dx -  \int_\T (u_0(x)-u_0(x+y))_+dx 
	    \\ 
	    \le \int_0^t  \int_\T  \left|
	  h(x,u_\mu(\tau,x))-h(x+y, u_\mu(\tau,x))\right|d\tau dx .
\end{multline*}
In view of \eqref{eq:apriorivisc} and of the last point in
Assumption~\ref{hyp:gen}, we conclude 
\begin{equation*}
  \int_\T  (u_\mu(t,x)- u_\mu(t,x+y))_+dx \le  \int_\T (u_0(x)-u_0(x+y))_+dx +
  \O(y),
\end{equation*}
and
\begin{equation}\label{eq:compacitex}
  \int_\T  |u_\mu(t,x)- u_\mu(t,x+y)|dx \le  \int_\T |u_0(x)-u_0(x+y))|dx +
  \O(y).
\end{equation}
Equicontinuity in time is proved similarly by setting $\bar u_\mu(t,x) =
u_\mu(t+\tau,x)$. Since $h$ depends on $x$ and $u_\mu$ only, the only extra term
that we have to estimate is of the form
\begin{equation*}
 \left| \int_t^{t+\tau}\!\!\int_\T h(x,u_\mu(s,x))\phi(x)dsd\tau\right|\le
 \tau C\(\|u_0\|_{L^\infty(\T)}\) \|\phi\|_{L^1(\T)},
\end{equation*}
where we have used \eqref{eq:apriorivisc}. 
\smallbreak

The above properties imply that the sequence $(u_\mu)_\mu$ is uniformly bounded and equicontinuous in $(0,\infty) \times \T$, so there is a subsequence
of $(u_\mu)_\mu$, which converges boundedly almost everywhere on
$(0,\infty)\times \T$ and strongly in $L^1_{\rm loc}((0,\infty)\times
\T)$. We infer from \eqref{EntropyViscous} that the limit $u$ is \emph{an} entropy solution, hence \emph{the} entropy solution by uniqueness of the entropy solution. We have thus proved the item $(i)$, while item $(ii)$ follows from
\eqref{eq:monotonieA} and \eqref{eq:apriorivisc}, after passing to the limit $\mu \to 0$.

\begin{remark}
  Dividing \eqref{eq:compacitex} by $y$ yields the propagation of $BV$
  regularity mentioned in Remark~\ref{rem:BV}. 
\end{remark}
%At this stage, we have proved $(i)$. The point $(ii)$ follows from
%\eqref{eq:monotonieA} and \eqref{eq:apriorivisc}, after passing to the
%limit $\mu\to 0$ (or, for the first part, by 
%resuming the proof of uniqueness with now $\eta (u,v) = (u-v)_+$ and
%$q(u,v) = \mathrm{sign}(u-v)_+\(f(u)-f(v)\)$). 
\smallbreak

\noindent {\bf $\bullet$ Comparison when source terms are ordered.} It
remains to 
prove $(iii)$.  Since we assume $u_0\ge 0$, we know from $(ii)$ that
$u_1(t,x),u_2(t,x)\ge 0$ for $(t,x)\in (0,\infty)\times \T$, and so
$h^{(2)}(x,u_j)\ge h^{(1)}(x,u_j)$ for $j=1,2$. We then consider the viscous approximations $u_{1, \mu}$ and $u_{2,\mu}$ of, respectively, $u_1$ and $u_2$. The analogue of
\eqref{eq:signe} reads
\begin{align*}
 & \int_\T \eta_\eps (u_{2,\mu}(t)- u_{1,\mu}(t)) -  \int_\T \eta_\eps (u_{2,\mu}(s)-
  u_{1,\mu}(s))\le \\
& \int_s^t \!\!\int_\T \eta_\eps''(u_{2,\mu}- u_{1,\mu}) \(f(u_{2,\mu})-f(u_{1,\mu})\)
\d_x(u_{2,\mu}-u_{1,\mu})\\
&-\int_s^t  \!\!\int_\T  \eta_\eps'(u_{2,\mu}-
  u_{1,\mu})\(h^{(2)}(x,u_{2,\mu})-h^{(1)}(x,u_{1,\mu})\). 
\end{align*}
Passing to the limits $\eps\to 0$, $s\to 0$, and finally $\mu\to 0$, we
infer, since $u_1$ and $u_2$ have the same initial datum:
\begin{equation*}
  \int_\T  \(u_2(t)- u_1(t)\)_+ \le 
-\int_0^t  \!\!\int_\T \mathbf{1}_{u_2>u_1} \(h^{(2)}(x,u_2)-h^{(1)}(x,u_1)\). 
\end{equation*}
Now since the integrand of the right hand side can be decomposed as
\begin{equation*}
 \underbrace{\mathbf{1}_{u_2>u_1} \(h^{(2)}(x,u_2)-h^{(2)}(x,u_1)\)}_{\ge 0, \text{ by
    Assumption~\ref{hyp:gen}}} +\mathbf{1}_{u_2>u_1} 
\underbrace{\(h^{(2)}(x,u_1)-h^{(1)}(x,u_1)\)}_{\ge 0, \text{ from above}},
\end{equation*}
we conclude that $\int_\T  \(u_2(t)- u_1(t)\)_+ =0$, hence
$u_1\ge u_2\ge 0$ as announced.
\smallbreak

\noindent {\bf $\bullet$ Item $(iv)$: the case $h(x, u) = a(x)
  u/|u|$.}\\
In this case, the uniqueness of admissible weak solutions in the sense
of Definition \ref{def:solution-alph=1} holds without change. The
difficulty then is to prove existence of admissible weak solutions. In
order to do that, instead of approximating \eqref{eq:flux-general} by
its viscous approximation \eqref{eq:viscosite}, we also add an
approximation of the function $h$. Namely, we consider the
approximation given, for $\mu \in (0,1)$, by  
\begin{equation}
  \label{eq:viscosite-mu-1}
  \d_t u_\mu+\d_x f(u_\mu)+h_{1- \mu} (x,u_\mu)=\mu
\d_x^2 u_\mu \text{ in } \R_+ \times \T, \quad u_{\mu\mid t=0}=u_0 \text{ in } \T, 
\end{equation}
where $h_{1 - \mu}(x,u)$ is given by \eqref{eq:notre-h}. For each $\mu
>0$, all the computations performed above can be repeated, so that the sequence
of solutions $(u_\mu)_{\mu}$ is uniformly bounded and equicontinuous
in $(0,\infty) \times \T$, so up to some subsequence, it converges
boundedly  almost everywhere on $(0,\infty) \times \T$ and strongly in
$L^1_{\rm loc} ((0,\infty)\times \T)$ to some $u$ as $\mu \to 0$, so
that a.e. $(t,x) \in (0,\infty) \times \T$,  
\[
  h_{1- \mu} (x, u_\mu(t,x)) \Tend \mu  0 h(t,x),
\]
where $h(t,x) = a(x) \frac{u(t,x)}{|u(t,x)|}$ if $u(t,x) \neq 0$, 
and $h(t,x) \in [-1,1]$ if $u(t,x) = 0$.

%
% \begin{multline*}
% 	h_{1- \mu} (x, u_\mu(t,x)) \to_{\mu \to 0} h(t,x),
% 	\\
% 	\text{ where } h(t,x) = a(x) \frac{u(t,x)}{|u(t,x)|} \text{ if } u(t,x) \neq 0, 
% 	\text{ and } 
% 	h(t,x) \in [-1,1] \text{ if } u(t,x) = 0.
% \end{multline*}
%
It follows that $u$ is an admissible weak solutions of
\eqref{eq:flux-general}--\eqref{eq:ci} in the sense of
Definition~\ref{def:solution-alph=1}, hence the admissible weak
solution in the sense of Definition~\ref{def:solution-alph=1} by
uniqueness. The comparison results can then be proved as before, by
studying them for the solutions of \eqref{eq:viscosite-mu-1} and
passing to the limit $\mu \to 0$. 
\end{proof}

%%%%%%%%%%%%%%%%%%%%%%%%%%
\section{Proof of Proposition \ref{prop:partout}: The case of a damping acting everywhere}
\label{sec:partout}

\begin{proof}[Proof of Proposition \ref{prop:partout}]
  Let $p>2$. In view of \eqref{eq:flux-general}, we formally have
  \begin{equation*}
    \frac{d}{dt}\int_\T|u(t,x)|^pdx = p\int_\T |u|^{p-2}u\d_t u dx =
    -p\int_\T |u|^{p-2}u \d_x f(u)dx-p\int_\T a(x)|u|^{p-\alpha} dx.
  \end{equation*}
Writing formally $p |u|^{p-2}u \d_x f(u) = p |u|^{p-2}u f'(u)\d_xu$, we have
\begin{equation*}
 p  |u|^{p-2}u \d_x f(u)  = \d_x g_p(u),\quad \text{with
  }g_p'(z)=p |z|^{p-2}zf'(z),
\end{equation*}
and so 
\begin{equation*}
  \int_\T |u|^{p-2}u \d_x f(u)dx=0.
\end{equation*}
As a matter of fact, this reasoning is valid only for sufficiently
smooth solutions. However, the conclusion remains true in our context, as can
be seen by using Definitions~\ref{def:solution} and \ref{def:solution-alph=1} with $p>2$, $\eta(u) = |u|^{p} u$, $\psi(t,x) = 1$:
\begin{equation*}
   \frac{d}{dt}\int_\T|u(t,x)|^pdx \leq -p\int_\T a(x)|u|^{p-\alpha} dx \le
   -\frac{p\delta }{\|u(t)\|_{L^\infty(\T)}^\alpha }\int_\T|u(t,x)|^pdx ,
\end{equation*}
that is
\begin{equation*}
  \frac{d}{dt}\ln \|u(t)\|_{L^p(\T)}\le -\frac{\delta}{\|u(t)\|_{L^\infty(\T)}^\alpha }.
\end{equation*}
By integration,
\begin{equation*}
  \|u(t)\|_{L^p(\T)}\le \|u_0\|_{L^p(\T)} \exp\(-\delta\int_0^t
  \frac{ds}{\|u(s)\|_{L^\infty(\T)}^\alpha }\), 
\end{equation*}
and by letting $p\to \infty$,
\begin{equation}\label{eq:faux-gronwall}
  \|u(t)\|_{L^\infty(\T)}\le \|u_0\|_{L^\infty(\T)} \exp\(-\delta\int_0^t
  \frac{ds}{\|u(s)\|_{L^\infty(\T)}^\alpha }\).
\end{equation}
Set 
\begin{equation*}
  \Phi(t) = \int_0^t
  \frac{ds}{\|u(s)\|_{L^\infty(\T)}^\alpha }.
\end{equation*}
The above inequality reads
\begin{equation*}
  \Phi'(t)\ge \frac{1}{\|u_0\|_{L^\infty(\T)}^\alpha}e^{\alpha\delta
    \Phi(t)}. 
\end{equation*}
Since the solution to $\Psi'= Ce^{\alpha\delta\Psi}$, $\Psi(0)=0$, is given
by
\begin{equation*}
  \Psi(t) = -\frac{1}{\alpha\delta}\ln \(1-C\alpha \delta t\),\quad t\le
  \frac{1}{C\alpha\delta }, 
\end{equation*}
we conclude by comparison that $\Phi(t)\to +\infty$ as $t\to
 \|u_0\|_{L^\infty(\T)}^\alpha/\(\alpha \delta\)$, hence the result thanks
to \eqref{eq:faux-gronwall}, with 
\begin{equation*}
  T= \frac{\|u_0\|_{L^\infty(\T)}^\alpha}{\alpha \delta}, 
\end{equation*}
which may not be the sharp extinction time, but an upper bound for it. 
\end{proof}
\begin{remark}[Whole space]
  The above argument shows that the conclusion of
  Proposition~\ref{prop:partout}  remains valid if
  \eqref{eq:partout} is set up on the whole line, $x\in \R$, provided
  that the solution which we consider goes to zero at $\pm \infty$. 
  \\
  Otherwise, a similar proof can be given,  by considering estimates of $u(t)$ in $L^p(A_-(t), A_+(t))$ where 
  \[
  	A_- (t) = A_-^0 +  t \sup_{s \in [-K,K]} f'(s)\quad \hbox{ and } \quad A_+(t) = A_+^0 + t \sup_{s \in [-K,K]} f'(s)
\]
for any pair $(A_-^0, A_+^0) \in \R^2$; see e.g. the proof of Lemma \ref{Lem:Transport-Extinction} where the same kind of arguments are developed.
\end{remark}
%
%\begin{remark}[$L^\infty$ bound of the solution]
%	\label{rem:L-infty-bound}
%	%
%	The above proof can be adapted easily to show that, when $a= a(x)$ is a non-negative function on the torus $\T$, if $u_0 \in L^\infty(\T)$, then the corresponding solution of \eqref{eq:flux-general}--\eqref{eq:ci} satisfies 
%	%
%	\begin{equation}
%		\label{eq:comparison-constant}
%		\forall t \ge 0, \quad \| u(t) \|_{L^\infty(\T)} \le \| u_0 \|_{L^\infty(\T)}.
%	\end{equation}
%	%
%\end{remark}

% La remarque qui suit est foireuse et ne peut pas forcement etre
% corrigee. Heureusement ?
%\begin{remark}[Alternative proof]
 %If in addition $u_0\ge 0$ (and so $u(t,x)\ge 0$), then
 %Proposition~\ref{prop:partout} can be proved by 
 %comparison with the solutions of suitable ordinary differential
 %equations, since for instance the solution to 
 %\begin{equation*}
  % \d_t v +\gamma\frac{v}{|v|^\alpha}=0,\quad v_{\mid t=0} = \|u_0\|_{L^\infty(\T)}
 %\end{equation*}
%satisfies $v(t,x)\ge u(t,x)$ in view of
%Proposition~\ref{prop:comparaison}, and enjoys finite time
%extinction. 
%\end{remark}

%\input{transport}

\section{Proof of Theorem \ref{thm:Transport}: the transport case}
\label{sec:transport}

The goal of this section is to prove Theorem \ref{thm:Transport}. Thus, we consider the setting of Theorem \ref{thm:Transport}, and we assume in particular that $f$ is smooth, satisfies \eqref{eq:transport}, $K$ satisfies \eqref{eq:def-K}, and $a$ satisfies Assumption \ref{hyp:a} and \eqref{eq:local}.

\subsection{Strategy}
\label{subsec:transport-results}

In order to ease the reading of the proof of Theorem
\ref{thm:Transport}, we decompose it into two lemmas, the first one
stating that in the setting of Theorem \ref{thm:Transport} the
solutions of \eqref{eq:flux-general}--\eqref{eq:ci} decay
exponentially, while the second one will show that if the initial
datum is small enough, then the corresponding solution of
\eqref{eq:flux-general}--\eqref{eq:ci} vanishes in finite time. 
\begin{lemma}
	\label{Lem:Transport-decay}
	Within the setting of Theorem \ref{thm:Transport}, there exist
        $C >0$ and $\mu >0$ such that for any initial datum $u_0 \in
        L^\infty(\T)$ satisfying \eqref{eq:ci-small}, any solution $u$
        of \eqref{eq:flux-general}--\eqref{eq:ci} satisfies 
	\begin{equation}
		\label{Exp-decay}
		\| u(t) \|_{L^\infty(\T)} \le C e^{- \mu t} \| u_0 \|_{L^\infty(\T)}. 
	\end{equation}
\end{lemma}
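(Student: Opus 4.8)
The plan is to prove Lemma~\ref{Lem:Transport-decay} by constructing a suitable weighted Lyapunov functional that simultaneously exploits the localized damping in $\omega$ and the non-degenerate transport at the origin, in the spirit of the stabilization results for hyperbolic conservation laws cited in the introduction. Concretely, I would first fix $p>2$ and, as in the proof of Proposition~\ref{prop:partout}, use Definition~\ref{def:solution} (resp.\ Definition~\ref{def:solution-alph=1}) with $\eta(u)=|u|^p$ and a time-dependent, space-dependent test function $\psi(t,x)=e^{\lambda t}W(x)$, where $W$ is a smooth positive weight on $\T$ to be chosen. This yields a differential inequality of the form
\begin{equation*}
\frac{d}{dt}\int_\T W(x)|u(t,x)|^p\,dx \le \lambda \int_\T W|u|^p\,dx - \int_\T \big(W' g_p^{\mathrm{sign}}(u) \big)\,dx - p\int_\T a(x)W(x)|u|^{p-\alpha}\,dx,
\end{equation*}
where the flux term, after integration by parts, produces a contribution weighted by $W'$ instead of vanishing. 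Here $W' g_p^{\mathrm{sign}}(u)$ schematically denotes the integrated flux term $\d_x g_p(u)$ tested against $W$, with $g_p'(z)=p|z|^{p-2}z f'(z)$.

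The key point is the choice of the weight. Since $\inf_{[-K,K]}|f'(s)|>0$ and, after the maximum principle \eqref{ComparisonCstts}, the solution stays in $[-K,K]$, the flux has a definite sign on the relevant range, so $g_p(u)$ is a monotone function of $u$ with $g_p(u)\approx |u|^p$ up to the sign of $f'$. I would choose $W$ to be monotone ``around the circle'' in the direction opposite to the transport, except for a controlled jump (or steep transition) across $\omega$, so that outside $\omega$ the term $-\int W' g_p(u)$ is $\le -c\int_\T |u|^p$ for some $c>0$ independent of $p$, while across $\omega$ the (wrong-sign) contribution of $W'$ is dominated by the damping term $-p\int_\omega a W|u|^{p-\alpha}\ge -p\delta \inf_\omega W \int_\omega |u|^{p-\alpha}$. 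Combining these, and absorbing the sublinear damping term using $|u|^{p-\alpha}\ge \|u(t)\|_{L^\infty}^{-\alpha}|u|^p$ exactly as in Section~\ref{sec:partout}, I would obtain
\begin{equation*}
\frac{d}{dt}\int_\T W|u|^p\,dx \le -\mu_0 \int_\T W|u|^p\,dx
\end{equation*}
for some $\mu_0>0$ independent of $p$ (here one uses that $\|u(t)\|_{L^\infty}\le K$ to lower-bound the gain). Taking $p\to\infty$ and using that $W$ is bounded above and below then gives $\|u(t)\|_{L^\infty(\T)}\le C e^{-\mu t}\|u_0\|_{L^\infty(\T)}$ with $\mu=\mu_0$, $C=(\sup W/\inf W)$.

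The main obstacle I anticipate is making the weight argument genuinely work with a jump (indicator-type) damping $a$ and with the flux sign possibly changing with $x$ via the value of $u$: one must ensure the ``bad'' term produced by $W'$ near the boundary of $\omega$ is really controlled by the damping, uniformly in $p$, which forces a careful matching of the size of the jump of $W$ against $\delta$, $K$, and $\inf_{[-K,K]}|f'|$. A clean way to handle this is to first reduce to the model case $a=\delta\1_\omega$ using the comparison principle Proposition~\ref{prop:comparaison}(2), then to regularize $W$ so that $W'$ is large only on a thin strip inside $\omega$ where $a\ge\delta$, so the damping term with its extra factor $p$ beats the flux term there. A secondary technical point is justifying the $L^p$ computation rigorously from the entropy formulation (rather than formally): this is done exactly as in the proof of Proposition~\ref{prop:partout}, approximating $\eta(u)=|u|^p$ and $W$ by smooth functions and using the admissibility inequality, and in the case $\alpha=1$ invoking Definition~\ref{def:solution-alph=1} with the measurable selection $h\in a\,\Sign(u)$, noting $h(t,x)\,\mathrm{sign}(u)\,|u|^{p-1}=a(x)|u|^{p-1}$ a.e.
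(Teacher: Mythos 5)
Your overall strategy --- a weighted $L^p$ Lyapunov functional in which the damping on $\omega$ absorbs the unfavorable part of the weighted flux term, followed by the limit $p\to\infty$ --- is the same in spirit as the paper's, but there is a genuine quantitative gap in your choice of weight. With a fixed, $p$-independent weight $W$, the favorable flux contribution is $\int_\T W'\,g_p(u)\,dx$ with $g_p'(z)=p|z|^{p-2}zf'(z)$, and since $|g_p(s)|\le \beta_+|s|^p$ for $|s|\le K$ (no factor $p$ survives the integration), this term is at most $\|W'\|_{L^\infty}\beta_+\int_\T|u|^p\,dx$. Hence the best you can obtain is $\frac{d}{dt}\int_\T W|u|^p\,dx\le -\mu_0\int_\T W|u|^p\,dx$ with $\mu_0>0$ independent of $p$ --- exactly what you claim --- but after taking $p$-th roots this only gives $\|u(t)\|_{L^p(\T)}\le C^{1/p}e^{-\mu_0 t/p}\|u_0\|_{L^p(\T)}$, and the rate $\mu_0/p$ vanishes as $p\to\infty$: no exponential $L^\infty$ decay follows. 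For the limit to survive, the decay rate of the functional must be proportional to $p$, which forces the weight itself to be a $p$-th power of a fixed profile; ``$\mu_0$ independent of $p$'' is not the right uniformity.

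This is precisely how the paper proceeds: it tests with $\psi(t,x)=e^{-p\lambda\varphi(x)}$, where $\varphi\in\mathscr{C}^\infty(\T)$ satisfies $\varphi(x)=x$ on $\T\setminus\omega$, so that $\d_x\psi=-p\lambda\,\d_x\varphi\,e^{-p\lambda\varphi}$ carries the factor $p$. The good term is then $-p\lambda\int_\T e^{-p\lambda\varphi}g_p(u)\,dx$, of size $-p|\lambda|\beta_-E_{p,\lambda}$ because $g_p$ has the sign of $f'(0)$ and $|g_p(u)|\ge\beta_-|u|^p$ (your worry about the sign of the flux varying with $u$ is vacuous for this reason), while the bad part, supported in $\omega$ where $\d_x\varphi\ne 1$, is of size $p|\lambda|\|\d_x\varphi-1\|_{L^\infty}\beta_+\int_\omega e^{-p\lambda\varphi}|u|^p$ and is absorbed by the damping $-p\delta\|u(t)\|_{L^\infty}^{-\alpha}\int_\omega e^{-p\lambda\varphi}|u|^p$ as soon as $|\lambda|\,\|\d_x\varphi-1\|_{L^\infty}\beta_+\le\delta K^{-\alpha}$, using $\|u(t)\|_{L^\infty}\le K$ and taking $\lambda$ with the sign of $f'(0)$. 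Gronwall, the $p$-th root and $p\to\infty$ then give $\mu=|\lambda_*|\beta_-$ and $C=e^{|\lambda_*|(\sup\varphi-\inf\varphi)}$. Two further minor points: the $e^{\lambda t}$ factor in your test function contributes nothing useful (and the $+\lambda\int W|u|^p$ term in your display has the wrong sign to help); and the reduction to $a=\delta\1_\omega$ via Proposition~\ref{prop:comparaison} is both unnecessary (simply bound $a\ge\delta\1_\omega$ inside the nonnegative damping integrand) and not available for sign-changing data, since that comparison statement requires $u_0\ge0$.
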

\begin{lemma}
	\label{Lem:Transport-Extinction}
	Within the setting of Theorem \ref{thm:Transport}, there exist $\varepsilon_0>0$ and $T_0>0$ such that if 
	\begin{equation}
		\label{eq:ci-very-small}
		\|u_0\|_{L^\infty(\T)} \le \varepsilon_0, 
	\end{equation}
	the solution $u$ of \eqref{eq:flux-general}--\eqref{eq:ci} satisfies
	\begin{equation}
		\label{eq:null-after-time-T0}
		u(t, x) = 0, \quad \forall t \ge T_0,\, \text{ a.e. }\, x \in \T.
	\end{equation}
\end{lemma}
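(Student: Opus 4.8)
\textbf{Proof plan for Lemma~\ref{Lem:Transport-Extinction}.}
The plan is to exploit the transport mechanism directly on the characteristics, taking advantage of the fact that when $\|u_0\|_{L^\infty(\T)}$ is very small, the flux speed $f'(u)$ stays close to the nonzero constant $f'(0)$, so the solution is genuinely transported across $\omega$ at a uniform rate. First I would fix $\varepsilon_0 \le K$ small enough that $c_0 := \inf_{|s|\le \varepsilon_0}|f'(s)| > 0$ and, say, all characteristic speeds have the same sign as $f'(0)$; by Proposition~\ref{prop:comparaison}, $\|u(t)\|_{L^\infty(\T)} \le \varepsilon_0$ for all $t\ge 0$, so this regime is preserved. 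The key quantitative step is a localized $L^p$ estimate with a spatial weight adapted to transport: for a smooth positive weight $\phi=\phi(x)$ on $\T$ (with the weight constructed so that $-f'(0)\phi' \ge 0$ away from $\omega$ and strictly positive on a neighbourhood of $\omega$, mimicking the weights of \cite{Coron-Andrea-Bastin-IEEE-2007,Coron-Bastin-Andrea-SICON-2008} used in Lemma~\ref{Lem:Transport-decay}), one tests the entropy inequality with $\eta(u)=|u|^p$ and $\psi(t,x)=\phi(x)$ to get, after letting $p\to\infty$, a differential inequality of the form
\begin{equation*}
	\frac{d}{dt}\,\mathrm{ess\,sup}_{x\in\T}\big(\phi(x)|u(t,x)|\big) \le -\frac{\delta}{\|u(t)\|_{L^\infty(\T)}^\alpha}\,\mathrm{ess\,sup}_{x\in\omega}\big(\phi(x)|u(t,x)|\big) + (\text{transport terms}\le 0),
\end{equation*}
the point being that the weighted sup is attained, for transported data, inside $\omega$ after a bounded time, so the damping term controls the full weighted norm there.

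Rather than pushing that Gr\"onwall-type argument to full extinction (which the exponential decay of Lemma~\ref{Lem:Transport-decay} already nearly does but cannot close in finite time), the cleaner route I would take is a \emph{comparison} argument. Using Lemma~\ref{Lem:Transport-decay}, after a time $t_1$ depending only on the data we have $\|u(t_1)\|_{L^\infty(\T)} \le \varepsilon_1$ for an arbitrarily small $\varepsilon_1$; so it suffices to prove finite-time extinction starting from data bounded by some fixed small $\varepsilon_1$. Now by Proposition~\ref{prop:comparaison}, part~2, $|u(t,x)|$ is bounded above by the solution $\bar u$ of \eqref{eq:flux-general}--\eqref{eq:ci} with damping $\bar a(x)=\delta\,\mathbf 1_\omega(x) \le a(x)$ and initial datum the constant $\varepsilon_1 \ge |u_0|$ (applying the comparison to $\pm u$ and using that $\varepsilon_1$ and $-\varepsilon_1$ are steady states away from the damping and get damped towards $0$ inside it; one compares $u$ with the solution from datum $\varepsilon_1$ from above and with the solution from datum $-\varepsilon_1$ from below). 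So it is enough to show that for the specific configuration $a=\delta\mathbf 1_{(0,A)}$, $u_0\equiv\varepsilon_1$, the solution reaches $0$ identically in finite time. For that configuration the solution can be followed essentially explicitly along characteristics: a characteristic carrying value $v$ travels at speed $f'(v)$, loses a definite amount $\sim \delta (\text{crossing time}) \sim \delta A/|f'(0)|$ of its amplitude (measured in the variable $\rho=v^2$, cf.\ \eqref{eq:ode-explicit}) each time it crosses $(0,A)$, and crossings recur at time intervals $\le 1/c_0$; since each crossing decreases $\rho$ by a fixed positive amount independent of the current (small) amplitude — this is exactly the sublinear, non-Lipschitz feature $u/|u|^\alpha$ that makes the ODE \eqref{eq:ode-explicit} die in finite time — after finitely many crossings the value hits $0$, and this bound on the number of crossings is uniform over all characteristics. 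Taking $T_0$ to be that (uniform) number of crossings times $1/c_0$, plus $t_1$, gives \eqref{eq:null-after-time-T0}.

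The main obstacle I expect is making the characteristic-crossing bookkeeping rigorous at the level of weak (entropy) solutions, since characteristics of a genuinely nonlinear conservation law need not be globally defined and shocks form; one cannot literally "follow a characteristic forever." The honest way around this is to stay at the level of the weighted $L^\infty$ differential inequality above, or, even better, to invoke the detailed construction carried out for Theorem~\ref{thm:Burgers-Main} in Section~\ref{sec:burgers} in the degenerate case and note that the present non-degenerate case $f'(0)\ne0$ is strictly easier — here the transport speed does not vanish, so there is a uniform lower bound on how fast mass is swept into $\omega$, and no boundary layer forms. Concretely, I would prove the following clean intermediate claim and let everything follow from it: there exist $\tau_0>0$ and $\theta\in(0,1)$, depending only on $c_0,\delta,|\omega|$, such that $\|u(t+\tau_0)\|_{L^\infty(\T)}^\alpha \le \|u(t)\|_{L^\infty(\T)}^\alpha - \theta$ whenever $\|u(t)\|_{L^\infty(\T)}\le\varepsilon_1>0$; iterating this a finite number of times ($\le \varepsilon_1^\alpha/\theta$) forces the $L^\infty$ norm to $0$. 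This claim in turn is obtained from the weighted estimate together with the transport time bound: within a time window of length $\tau_0 \approx \mathrm{diam}(\T)/c_0 + |\omega|/c_0$, the region of $\T$ where $\phi|u|$ is maximal is carried through $\omega$, and over the sub-interval of that window during which the maximizing point sits in $\omega$ the damping term forces a decrement of size at least $\theta$ in $\|u\|_{L^\infty}^\alpha$, exactly as in \eqref{eq:ode-explicit}. I would isolate the transport-time fact ("any point of $\T$ is transported through $\omega$ within time $\tau_0$") as a short lemma proved from finite speed of propagation and the sign condition on $f'$ near $0$.
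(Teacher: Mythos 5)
Your physical picture (everything on the torus is swept through $\omega$ within one loop time, and each crossing removes a fixed quantum of order $\alpha\delta A/\sup|f'|$ from $|u|^\alpha$, so finitely many loops kill the solution) is the right intuition, but neither of the two rigorous backbones you offer actually delivers the decisive quantitative step. The weighted functional of Lemma \ref{Lem:Transport-decay} obeys a differential inequality that is \emph{linear} in the functional (the damping term is only exploited through the crude bound $\delta\|u\|_{L^\infty}^{-\alpha}\ge \delta K^{-\alpha}$ or discarded), so it yields exponential decay of a weighted sup norm and nothing more; it cannot produce your intermediate claim $\|u(t+\tau_0)\|_{L^\infty(\T)}^\alpha\le\|u(t)\|_{L^\infty(\T)}^\alpha-\theta$. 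That claim is really a statement about the supremum "being carried through $\omega$", and for an entropy solution there is no well-defined trajectory of a maximizing point: shocks relocate the maximum, and the global sup may be attained outside $\omega$ throughout the sub-window you want to exploit, so the bookkeeping "value loses a fixed amount per crossing" cannot be read off the entropy formulation as stated (also, the fixed decrement is in $|u|^\alpha$, not in $\rho=u^2$). Your first route — comparison with the solution issued from the constant datum $\pm\varepsilon_1$ with damping $\delta\mathbf 1_\omega\le a$, which is legitimate by Proposition \ref{prop:comparaison} and is exactly the strategy used in Section \ref{sec:burgers} for Theorem \ref{thm:Burgers-Main} — only shifts the burden: for a general flux with $f'(0)\neq0$ the constant-datum solution is not "essentially explicit", and one would have to redo a construction of the type of Lemma \ref{Lem-Reg-u0} (characteristics, shock/regularity analysis) before any crossing count is justified. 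As written, the key step is asserted rather than proved, so there is a genuine gap.

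For comparison, the paper's proof avoids tracking maxima or characteristics altogether and has two soft steps. Step 1 localizes the argument of Proposition \ref{prop:partout} on an interval $[A_-(t),A_+(t)]\subset\omega$ whose endpoints move with the extreme characteristic speeds, so that no information enters it; the same Gronwall argument as in \eqref{eq:faux-gronwall} then forces $u\equiv 0$ on a fixed subinterval of $\omega$ after the time $\tau_0=\varepsilon_0^\alpha/(\alpha\delta)$, and the smallness $\varepsilon_0$ is chosen precisely so that $\tau_0<\tau_*$, the time at which the interval would collapse. Step 2 propagates this dead zone: testing the entropy inequality with $\eta(u)=|u|^2$ and cut-offs adapted to the region between the (fixed) left end of the dead zone and a right boundary moving at speed $\inf_{[-K,K]}|f'|$, the $L^2$ mass there is non-increasing, hence identically zero, and after one loop time the zero region covers all of $\T$. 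Note that Step 1 yields something strictly stronger than your per-cycle decrement (an exact zero on a fixed subinterval, not just a smaller sup), and it is exactly this stronger output that makes the sweeping argument a one-line $L^2$ estimate; if you want to rescue your scheme, the moving-interval estimate of Step 1 is the tool to prove it with.
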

The proofs of Lemma~\ref{Lem:Transport-decay} and
Lemma~\ref{Lem:Transport-Extinction} are  given in
Subsections~\ref{subsec:Proof-Lemma-Exp-Decay} and
\ref{subsec:Proof-Lem-Extinct}, respectively. The proof of Theorem
\ref{thm:Transport} is then given in 
Subsection~\ref{subsec:Proof-Thm-Transport}. 
\subsection{Proof of Lemma \ref{Lem:Transport-decay}}
\label{subsec:Proof-Lemma-Exp-Decay}

\begin{proof}
	We first choose a function $\varphi = \varphi(x)$ such that
	\begin{equation*}
		%\label{eq:def-phi}
		\varphi \in \mathscr{C}^\infty(\T), \quad\text{ with }
				\varphi(x) = x, \quad \forall x \in \T\setminus \omega.
	\end{equation*}
	Note that such a function $\varphi$ satisfies in particular that 
	\[
		\forall x \in \T \setminus \omega, \quad \partial_x \varphi (x) = 1, 
		\qquad
		\hbox{ and } 
		\qquad 
		\partial_x \varphi \in  \mathscr{C}^0(\T). 
	\]
	Now, let $u_0 \in L^\infty(\T)$ and $u$ the corresponding
        solution of \eqref{eq:flux-general}--\eqref{eq:ci}. We
        consider the Lyapunov functionals, indexed by $p\ge 2$ and
        some parameter $\lambda \in \R$ chosen later,   
	\begin{equation}
		\label{eq:def-E-p-lambda}
		E_{p,\lambda}(t) = \int_\T \left| e^{- \lambda \varphi(x)} u(t,x) \right|^p \, dx.
	\end{equation}
	Formally, these  functionals satisfy: 
	\begin{align*}
		\frac{dE_{p,\lambda}(t)}{dt} 
		& = 
		p \int_\T e^{-p \lambda \varphi(x)} |u(t,x)|^{p-2} u(t,x) \partial_t u(t,x) \, dx
		\\
		& =  
		- p \int_\T e^{-p \lambda \varphi(x)} |u(t,x)|^{p-2} u(t,x) \partial_x (f(u(t,x)) \, dx
		\\
		& \quad \hfill 
		- p \int_\T a(x) e^{-p \lambda \varphi(x)} |u(t,x)|^{p-\alpha} \, dx. 
	\end{align*}
	If $u$ were smooth, we would write
	\begin{align*}
		p |u(t,x)|^{p-2} u(t,x) \partial_x (f(u(t,x)) 
		& = 
		p f'(u(t,x)) |u(t,x)|^{p-2} u(t,x) \partial_x u(t,x) 
		\\
		&=  
		\partial_x (g_p(u(t,x))), 
	\end{align*}
	where $g_p$ is defined by 
	\begin{equation}
		\label{eq:def-g}
		g_p( s ) = p \int_0^s f'(\tau) |\tau|^{p-2} \tau \, d\tau, 
	\end{equation}
	so that we would write: 
	\begin{multline*}
		- p \int_\T e^{-p \lambda \varphi(x)} |u(t,x)|^{p-2} u(t,x) \partial_x (f(u(t,x)) \, dx
		\\
		= 
		- p \lambda \int_\T \partial_x \varphi (x) e^{-p \lambda \varphi(x)} g_p(u(t,x))\, dx. 
	\end{multline*}
Note in passing that $g_p$ has the same sign as $f'(0)$.
	As solutions $u$ may contain shocks, these estimates should be
        justified by using the definition of admissible weak
        solutions, i.e. inequalities \eqref{AdmissibleWeak-Identity}
        or \eqref{AdmissibleWeak-Identity-alph=1}, choosing $ p > 2$,
        $\eta(u) = |u|^{p} u$ and $\psi(t,x) = e^{ - p \lambda
          \varphi(x)}$. One obtains in that way: 
	\begin{align*}
		\frac{dE_{p,\lambda}(t)}{dt} 
		& \le 
		- p \lambda \int_\T \partial_x \varphi (x) e^{-p \lambda \varphi(x)} g_p(u(t,x))\, dx
		\\
		& \quad
		- p \int_\T a(x) e^{-p \lambda \varphi(x)} |u(t,x)|^{p-\alpha} \, dx
		\\
		& \le 
		- p \lambda \int_\T e^{-p \lambda \varphi(x)} g_p(u(t,x))\, dx
		\\
		& \quad
		+ p |\lambda| \| \partial_x \varphi -
                  1\|_{L^\infty(\T)}\int_\omega e^{-p \lambda
                  \varphi(x)} \left\lvert g_p(u(t,x))\right\rvert \, dx 
		\\
		& \quad
		- p \delta \int_\omega e^{-p \lambda \varphi(x)} |u(t,x)|^{p-\alpha}  \, dx. 
	\end{align*}
	It is thus natural to study the function $g_p$ in
        \eqref{eq:def-g}. In order to do this, we first note that if
        $\|u_0 \|_{L^\infty(\T)} \le K$, according to
        Proposition~\ref{prop:comparaison}, for all time $t\ge 0$, the
        $L^\infty$-norm of $u(t)$ is bounded by $K$. Therefore, we
        introduce  
	\begin{equation*}
	%	\label{Def-beta-s}	
		\beta_- = \inf_{s \in [-K,K]} |f'(s)| \quad \hbox{ and } \quad \beta_+ = \sup_{s \in [-K,K]} |f'(s)|, 
	\end{equation*}
	so that for all $(t,x) \in [0,\infty) \times \T$, 
	\begin{equation}
		\label{Bounds-g-p}
		\beta_- |u(t,x)|^p \le \left| g_p(u(t,x)) \right| \le \beta_+ | u(t,x)|^p.
	\end{equation}
	Therefore, we obtain
	\begin{align*}
		\frac{dE_{p,\lambda}(t)}{dt} 
		& \le 
		- p \lambda \int_\T e^{-p \lambda \varphi(x)} g_p(u(t,x))\, dx
		\\
		& \quad
		+ 
		p \left(
			|\lambda| \| \partial_x \varphi - 1\|_{L^\infty(\T)} \beta_+ - \delta \|u(t)\|_{L^\infty(\T)}^{-\alpha}
			\right)
		\int_\omega e^{-p \lambda \varphi(x)} |u(t,x)|^{p}  \, dx.
	\end{align*}
	In particular, for all $t \ge 0$, if 
	\begin{equation}
		\label{eq:decay-condition}
		|\lambda| \| \partial_x \varphi - 1 \|_{L^\infty(\T)} \beta_+ \le \delta \|u(t) \|_{L^\infty(\T)}^{-\alpha}, 
	\end{equation}
	we have 
	\begin{equation}
		\label{eq:decay-e-lambda}
		\frac{dE_{p,\lambda}(t)}{dt} 
		\le 
		- p \lambda \int_\T e^{-p \lambda \varphi(x)} g_p(u(t,x))\, dx.
	\end{equation}
	%
%	From the definition of $K$ and $\beta$ in \eqref{eq:def-K}, it is thus easy to check from the definition of $g$ that 
%	%
%	\begin{equation}
%		\forall s \in [-K,K], \quad \hbox{sign\,} (f'(0)) g_p(s) \ge \beta |s|^p, 
%	\end{equation}
	%
%	For $u_0 \in L^\infty(\T)$ such that $\| u_0 \|_{L^\infty} \le K$, by the comparison principle, the $L^\infty(\T)$-norm of $u(t)$ stays bounded by $\|u_0\|_{L^\infty(\T)}$, hence by $K$. 
	As for all $t$, $\|u(t)\|_{L^\infty(\T)} \le K$, we therefore choose 
	\begin{equation*}
		%\label{eq:def-lambda-star}
		\lambda_* = \hbox{sign\,} (f'(0)) \frac{\delta K^{-\alpha}}{ \| \partial_x \varphi - 1 \|_{L^\infty(\T)} \beta_+}, 
	\end{equation*}
	so that \eqref{eq:decay-condition} is satisfied and \eqref{eq:decay-e-lambda} becomes: 
	\begin{equation*}
		%\label{eq:Energy-decay}
		\frac{dE_{p,\lambda_*}(t)}{dt} 
		\le
		 - p |\lambda_*| \beta_- E_{p, \lambda*}(t).
	\end{equation*}
	Therefore, we get that
	\begin{equation*}
		\| e^{- \lambda_* \varphi} u(t) \|_{L^p(\T)} 
		\le
		e^{-|\lambda_*| \beta_- t} \| e^{- \lambda_* \varphi} u_0 \|_{L^p(\T)}, \quad \forall t \ge 0.
	\end{equation*}
	As $\lambda_*$ does not depend on $p$, we can pass to the limit as $p\to \infty$, and obtain:
	\begin{equation*}
		\| e^{- \lambda_* \varphi} u(t) \|_{L^\infty(\T)} 
		\le
		e^{-|\lambda_*| \beta_- t} \| e^{- \lambda_* \varphi} u_0 \|_{L^\infty(\T)}, \quad \forall t \ge 0, 
	\end{equation*}
	and thus, 
	\begin{equation*}
		%\label{eq:decay-L-infty}
		\| u(t) \|_{L^\infty(\T)} 
		\le
		e^{|\lambda_*| (\sup \varphi - \inf \varphi ) -|\lambda_*| \beta_- t} \| u_0 \|_{L^\infty(\T)}, \quad \forall t \ge 0. 
	\end{equation*}
	This concludes the proof of Lemma \ref{Lem:Transport-decay}, as $|\lambda_*| >0$.
\end{proof}
\begin{remark}
	Note that one can go further and show that the $L^\infty(\T)$ norm of the solutions of \eqref{eq:flux-general}--\eqref{eq:ci} decays in fact faster than an exponential in time. 
	\\
	Indeed, if we set
	\begin{equation*}
		%\label{eq:def-t-1}
		T_1 = \frac{2}{\beta_- } (\sup \varphi- \inf \varphi), 
	\end{equation*}
	and use the explicit choice 
	\begin{equation*}
		%\label{eq:def-lambda-star-u0}
		\lambda_* = \hbox{sign\,} (f'(0)) \frac{\delta \| u_0 \|_{L^\infty(\T)}^{-\alpha}}{ \| \partial_x \varphi - 1 \|_{L^\infty(\T)} \beta_+}, 
	\end{equation*}
	which is admissible according to the above proof, 
	one in fact gets
	\begin{align*}
		\| u(T_1) \|_{L^\infty(\T)} 
		 & \le 
		e^{- |\lambda_*| (\sup \varphi- \inf \varphi)} \| u_0 \|_{L^\infty(\T)}
		%\notag
		\\
		& \le 
		\exp\left( - c_0 \|u_0\|_{L^\infty(\T)}^{-\alpha} \right) \|u_0\|_{L^\infty(\T)}, 
	%	\label{eq:decay-t0-t1}
	\end{align*}
	where $c_0$ is given by 
	\begin{equation*}
		%\label{eq:def-c-0}
		c_0 = \delta \frac{ (\sup \varphi- \inf \varphi) }{ \| \partial_x \varphi - 1 \|_{L^\infty(\T)} \beta_+}.
	\end{equation*}
	Starting from there and using the semi-group property, we introduce a sequence of time, indexed by $n \in \N$, 
	\begin{equation*}
		%\label{eq:def-t-n}
		T_n = n T_1 \quad  \left(=  \frac{2 n}{ \beta_- } (\sup \varphi- \inf \varphi) \right), 
	\end{equation*}
	for which one gets immediately, for all $n \in \N$,
	\begin{equation*}
		%\label{eq:energy-t-n}
		\| u(T_{n+1}) \|_{L^\infty(\T)} 
		\le
		\exp\left( - c_0 \|u(T_n)\|_{L^\infty(\T)}^{-\alpha} \right) \|u(T_n)\|_{L^\infty(\T)}. 
	\end{equation*}
	It is then easy to check that the sequence $(\| u(T_n)\|_{L^\infty(\T)})_{n \in \N}$ goes to $0$ faster than any (non-trivial) geometric sequence, which in turn implies that the map $t \mapsto \|u(t)\|_{L^\infty(\T)}$ goes to $0$ faster than any exponential.
	
	It would be interesting to develop a direct proof of Theorem \ref{thm:Transport} based only on a suitable choice of Lyapunov functionals in the spirit of the one used above. 
\end{remark}
\subsection{Proof of Lemma \ref{Lem:Transport-Extinction}}
\label{subsec:Proof-Lem-Extinct}
\begin{proof}
To simplify the presentation,  in the proof of this lemma, 
 $\T$ is identified with an interval centered in $0$, and $\omega$
is identified with an interval of the form $(-A,A)$.

	The proof of Lemma \ref{Lem:Transport-Extinction} is divided
        in two steps. In the first step, we show that if
        $\varepsilon_0>0$ is chosen small enough, then necessarily,
        the solution $u$ of \eqref{eq:flux-general}--\eqref{eq:ci}
        with $u_0 \in L^\infty(\T)$ satisfying $\|u_0\|_{L^\infty(\T)}
        \leq \varepsilon_0$ vanishes in some part of the domain
        $\omega$ after some (small) time. We then show that this
        implies that the solution $u$ vanishes everywhere
        after some time. 
	% 
%	\par
	%
%	To fix the ideas, we will do the proof in the case $f'(0) >0$ only, the other case $f'(0) <0$ being completely similar. We will also identify the torus with the set $(-1,1)$ endowed with periodic boundary conditions and $\omega$ in \eqref{eq:local} with an interval of the form $(-A,A)$. These choices can be done without loss of generality by translations/dilatations arguments.
	%
	\smallbreak

	\noindent {\bf $\bullet$ Step 1.} We  introduce the paths 
	\begin{equation*}
		%\label{Def-A--A-+}
		A_-(t) = \sup_{[-K,K]} \{ f'\} t - \frac{A}{2}, \quad A_+(t) = \inf_{[-K,K]}\{f' \} t + \frac{A}{2}.
	\end{equation*}
	We fix $\tau_*$ such for all $t \in [0,\tau_*)$,  
	\[
		-A < A_-(t) < A_+ (t) < A, 
	\]
	that is 
	\begin{equation}
		\label{Def-T-*}	
		\tau_* = \frac{A}{\max \left\{2 \inf_{[-K,K]} |f' | , \ 
 \sup_{[-K,K]} |f' | - \inf_{[-K,K]} | f' | \right\}}.
	\end{equation}
	We then set, for $p \ge 2$ and $t \in [0, \tau_*]$, 
	\begin{equation*}
		\label{eq:def-e-p-loc}
		E_{p,\rm loc}(t)  = \int_{A_-(t)}^{A_+(t)} |u(t,x)|^p \, dx.
	\end{equation*}
	Arguing as in the proof of Proposition \ref{prop:partout}, as
        $[A_-(t), A_+(t)] \subset \omega$ for all $t \in [0, \tau_*]$ and
        $ \inf f' \le f'(u(t,x)) \le \sup f' $ for all $(t,x) \in
        \R_+\times \T$, we get, for all $t \geq 0$, 
	\begin{equation}
		\label{Decay-Energy-local}
		\frac{d}{dt} E_{p, \rm loc} (t) \le - \frac{ p
                  \delta}{\|u(t)\|_{L^\infty(A_-(t),A_+(t))}^\alpha}
                E_{p,\rm loc}(t).  
	\end{equation}
	In fact, to prove this estimate rigorously, we use the definition of admissible weak solutions, i.e. the inequalities \eqref{AdmissibleWeak-Identity} or \eqref{AdmissibleWeak-Identity-alph=1}, choosing $ p > 2$, $\eta(u) = |u|^{p} u$ and $\psi(t,x) = \varphi_\epsilon(t,x)$, where 
	\begin{multline*}
		\varphi_\epsilon(t,x) = \varphi_-^0 \left(\frac{x-
                    A_-(t)}{\epsilon}\right) {\mathbf 1}_{x \in (A_-(t)- \epsilon, A_-(t))} 
		+{\mathbf 1}_{x \in (A_-(t), A_+(t))} 
		\\
		+ \varphi_+^0 \left(\frac{x-
                    A_+(t)}{\epsilon}\right){\mathbf 1}_{x \in (A_+(t), A_+(t)+ \epsilon)},
	\end{multline*}
	with $\varphi_-^0$, $\varphi_+^0$ non-negative smooth cut-off
        functions, taking value $1$ on $\R_+$ and vanishing on
        $(-\infty, -1)$ for $\varphi_-^0$, taking value $1$ on $\R_-$
        and vanishing on $(1,\infty)$ for $\varphi_+^0$. We then pass
        to the limit $\epsilon \to 0$ in the inequalities
        \eqref{AdmissibleWeak-Identity} or
        \eqref{AdmissibleWeak-Identity-alph=1} to show the estimate
        \eqref{Decay-Energy-local}.  
	
	This yields, for all $t \in [0, \tau_*]$, 
	\begin{equation*}
		\label{eq:dynamics-e-p-loc}	
		\frac{d}{dt} \left(\log(\|u \|_{L^p(A_-(t),A_+(t))})
                \right) \le  - \frac{
                  \delta}{\|u(t)\|_{L^\infty(A_-(t),A_+(t))}^\alpha}. 
	\end{equation*}
	Integrating this expression and letting $p \to \infty$, we obtain, similarly as in \eqref{eq:faux-gronwall}, that for all $t \in [0, \tau_*]$, 
	\begin{equation*}
		\|u(t)\|_{L^\infty(A_-(t),A_+(t))} 
		\le \|u_0\|_{L^\infty(-A/2,A/2)} \exp\left(- \delta
                  \int_0^t
                  \frac{ds}{\|u(s)\|_{L^\infty(A_-(s),A_+(s))}^\alpha} \right).  
	\end{equation*}
	Arguing as in the proof of Proposition \ref{prop:partout}, this implies that $\|u(\tau_0) \|_{L^\infty(A_-( \tau_0), A_+( \tau_0))} = 0$ for 
	\begin{equation}	
		\label{def-tau-0}
		 \tau_0 = \frac{\varepsilon_0^\alpha}{\alpha \delta},
	\end{equation}
	 which is smaller than $\tau_*$ for $\varepsilon_0>0$ small enough, i.e. for small enough initial datum (recall \eqref{eq:ci-very-small}).
	\\
	Using the same argument on the solutions $u(\cdot + t_0)$ for all $t_0 \ge 0$, we see that in fact we have obtained
	\begin{equation}
		\label{eq:u-vanishes-somewhere}
		\forall t \ge \tau_0, \quad \|u( t ) \|_{L^\infty(A_-(\tau_0), A_+(\tau_0))} = 0.
	\end{equation}
	We end up this step by emphasizing that $A_-(\tau_0) < A_+(\tau_0)$, so that \eqref{eq:u-vanishes-somewhere} really implies that $u(t,\cdot)$ vanishes on a constant interval for all time $t \ge \tau_0$.
	\smallbreak
	
	%\par
	%
	\noindent {\bf $\bullet$ Step 2.} In this step, to fix the
        ideas, we assume that $f'(0)>0$, as a completely similar proof
        can be adapted to the case $f'(0) <0$. We then look at the
        evolution of the $L^2$-norm of $u(t)$ on the set
        $(A_-(\tau_0), B(t))$, where $B(t) = A_+(\tau_0) + \beta_- (t
        - \tau_0)_+$, with $\beta_- = \inf_{[-K,K]} f' $ (recall that
        we have assumed $f'(0) >0$). Recall that 
        $u(t,x) = 0$ for all $t \ge \tau_0$ and $x \in [A_-(\tau_0),
        A_+(\tau_0)]$ according to \eqref{eq:u-vanishes-somewhere}. 
Using 
        the definition of admissible weak solutions, we infer:
	\begin{align*}
		\frac{d}{dt} \left( \int_{A_-(\tau_0)}^{B(t)} |u(t,x) |^2 \, dx\right)
		\le 0.
	\end{align*}
        Indeed, this comes from the definition of admissible weak solutions with $\eta(u)=|u|^2$ and $\psi(t,x)=\varphi_\epsilon(t,x)$, where
	\begin{multline*}
		\varphi_\epsilon(t,x) = \varphi_-^0 \left(\frac{x-
                    A_+(\tau_0)}{\epsilon}\right) {\mathbf{1}}_{x \in (A_+(\tau_0)- \epsilon, A_+(\tau_0))} 
		+{\mathbf{1}}_{x \in (A_+(\tau_0), B(t))}
		\\
		+ \varphi_+^0 \left(\frac{x-
                    B(t)}{\epsilon}\right){\mathbf{1}}_{x \in (B(t), B(t)+ \epsilon)},
	\end{multline*}
	with $\varphi_-^0$, $\varphi_+^0$ non-negative smooth cut-off
        functions, taking value $1$ on $\R_+$ and vanishing on
        $(-\infty, -1)$ for $\varphi_-^0$, taking value $1$ on $\R_-$
        and vanishing on $(1,\infty)$ for $\varphi_+^0$.
        \\
	Therefore, for all $t \ge \tau_0$, 
	\[
		\int_{A_-(\tau_0)}^{B(t)} |u(t,x) |^2 \, dx = 0.
	\]
	In particular, waiting a time $T_0> \tau_0$ such that $B(T_0)
        = A_-(\tau_0) + |\T|$,  for all $t \ge T_0$, for almost
        all $x \in \T$, $u(t,x) = 0$. This concludes the proof of
        Lemma~\ref{Lem:Transport-Extinction}. 
\end{proof}
\subsection{Proof of Theorem \ref{thm:Transport}}
\label{subsec:Proof-Thm-Transport}
\begin{proof}
	Theorem \ref{thm:Transport} easily follows from Lemma \ref{Lem:Transport-decay} and \ref{Lem:Transport-Extinction}. Indeed, if one chooses an initial datum $u_0$ satisfying \eqref{eq:ci-small}, the $L^\infty(\T)$-norm of the corresponding solution $u(t)$ of \eqref{eq:flux-general}--\eqref{eq:ci} decays exponentially. Thus, after some time, it becomes smaller than the parameter $\varepsilon_0$ in Lemma \ref{Lem:Transport-Extinction}. It will therefore vanish after some time according to Lemma \ref{Lem:Transport-Extinction}.
\end{proof}

\subsection{A control theoretic interpretation of Theorem \ref{thm:Transport}}
\label{subsec:control-interpretation}
	Let us mention that Theorem \ref{thm:Transport} is closely related to the following control problem: given $\omega$ a non-empty subinterval of $\T$, $T>0$ and $ u_0 \in L^2(\T)$, find a control function $v \in L^2((0,T) \times \T)$ such that the solution $u$ of 
	\begin{equation}
		\label{Controlled}
		\left\{
			\begin{aligned}
			 \d_t u + \d_x\left( f(u) \right) + {\bf 1}_\omega v =0,\quad & (t,x)\in \R_+ \times \T,
			\\
			u_{\mid t=0}=u_0, \quad & x \in \T.
			\end{aligned}
		\right.
	\end{equation}
	satisfies 
	\begin{equation}
		\label{ControlReq}
		u(T) = 0\quad  \hbox{ in } \T.
	\end{equation}
	Assuming \eqref{eq:transport} and defining $K$ by \eqref{eq:def-K}, Theorem \ref{thm:Transport} implies that, for an initial datum $u_0 \in L^\infty(\T)$ satisfying \eqref{eq:ci-small}, choosing the control function $v$ under the feedback form
	\begin{equation}
		\label{Feedback-control}	
		v(t,x) = - \delta \frac{u(t,x)}{| u(t,x)|^\alpha}, \quad (t,x) \in \R_+ \times \T,
	\end{equation}
	for some $\delta >0$, the controlled trajectory $u$ solving \eqref{Controlled}  will satisfy the controllability  requirement \eqref{ControlReq} in some time $T>0$. 
	\\
	Looking more closely at the proof of Lemma \ref{Lem:Transport-Extinction}, we can state the following result:
	
	\begin{proposition}
		\label{prop-control}
		Let $f$ be a smooth flux function satisfying \eqref{eq:transport} and define $K$ by \eqref{eq:def-K}. Let $\omega$ a non-empty subinterval of $\T$.
		\\
		Given $\gamma >0$, there exists a parameter $\delta$ in \eqref{Feedback-control} such that, for any initial datum $u_0 \in L^\infty(\T)$ satisfying \eqref{eq:ci-small}, the corresponding solution $u$ of \eqref{Controlled}--\eqref{Feedback-control} vanishes after the time 
		\begin{equation*}
			T=(1+ \gamma) \frac{|\T|}{\inf_{[-K,K]} |f'|}. 
		\end{equation*}
	\end{proposition}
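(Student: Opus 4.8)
The plan is to re‑run the proof of Lemma~\ref{Lem:Transport-Extinction}, this time keeping explicit track of how each time scale depends on the feedback gain $\delta$, and then to exploit the freedom to take $\delta$ as large as we wish. First I would observe that substituting \eqref{Feedback-control} into \eqref{Controlled} turns it into exactly \eqref{eq:flux-general} with damping coefficient $a=\delta\1_\omega$, which satisfies Assumption~\ref{hyp:a} and \eqref{eq:local}; hence Proposition~\ref{prop:cauchy} provides a unique global solution $u$ and Proposition~\ref{prop:comparaison} gives $\|u(t)\|_{L^\infty(\T)}\le\|u_0\|_{L^\infty(\T)}\le K$ for all $t\ge0$. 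Reducing as usual to $f'(0)>0$ (the case $f'(0)<0$ being symmetric), $f'$ does not vanish on $[-K,K]$, so $\beta_-:=\inf_{[-K,K]}f'=\inf_{[-K,K]}|f'|>0$, $\beta_+:=\sup_{[-K,K]}f'$, and all characteristic speeds $f'(u(t,x))$ lie in $[\beta_-,\beta_+]$. The key structural point is that, since $a=\delta\1_\omega$ already meets every standing hypothesis, I can run the local‑extinction step of Lemma~\ref{Lem:Transport-Extinction} directly at the $L^\infty$ size $K$ of the datum, without first having to shrink it via Lemma~\ref{Lem:Transport-decay}.

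Concretely, in Step~1 of that proof I would take $\varepsilon_0=K$: with the moving interval $A_-(t)=\beta_+t-A/2$, $A_+(t)=\beta_-t+A/2$ (identifying $\omega$ with $(-A,A)$), the weighted $L^p$ estimate, followed by $p\to\infty$ and the Gronwall‑type argument of Proposition~\ref{prop:partout}, shows that $u$ vanishes on $[A_-(t),A_+(t)]$ at the time $\tau_0:=K^\alpha/(\alpha\delta)$, provided $\tau_0<\tau_*$ with $\tau_*$ given by \eqref{Def-T-*}; crucially $\tau_*$ does not depend on $\delta$, so this holds as soon as $\delta>K^\alpha/(\alpha\tau_*)$. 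Running the same estimate from every later initial time, as in \eqref{eq:u-vanishes-somewhere}, then gives $u(t,\cdot)=0$ on the fixed interval $(A_-(\tau_0),A_+(\tau_0))$ for all $t\ge\tau_0$, and its length $\ell:=A_+(\tau_0)-A_-(\tau_0)=A-(\beta_+-\beta_-)\tau_0$ is positive (indeed it tends to $|\omega|/2$ as $\delta\to\infty$). Next I would invoke Step~2 of the proof of Lemma~\ref{Lem:Transport-Extinction} verbatim: the $L^2$ energy estimate on the growing interval $(A_-(\tau_0),B(t))$, with $B(t)=A_+(\tau_0)+\beta_-(t-\tau_0)_+$ — whose left endpoint lies in the zero set and whose right endpoint moves at the minimal speed $\beta_-$ — forces $u$ to vanish there, hence, by $|\T|$‑periodicity on all integer translates, on all of $\T$ once $B(t)\ge A_-(\tau_0)+|\T|$, that is for all $t\ge T_0:=\tau_0+(|\T|-\ell)/\beta_-$.

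To conclude, it only remains to choose $\delta$ large. Since $\ell>0$ we have $T_0<\tau_0+|\T|/\beta_-$, so taking $\delta$ large enough that both $\tau_0<\tau_*$ and $\tau_0\le\gamma|\T|/\beta_-$ (for instance $\delta\ge\max\{K^\alpha/(\alpha\tau_*)+1,\ K^\alpha\beta_-/(\alpha\gamma|\T|)\}$) yields $T_0<(1+\gamma)|\T|/\beta_-=(1+\gamma)|\T|/\inf_{[-K,K]}|f'|=T$, and therefore $u(T)=0$ in $\T$.

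I do not anticipate any real obstacle: all the analytic work is already contained in the proof of Lemma~\ref{Lem:Transport-Extinction}, and the only new ingredients are (i) the observation that the preliminary exponential‑decay step is unnecessary here precisely because $\delta$ may be taken large, and (ii) the elementary bookkeeping that the first time scale $\tau_0$ behaves like $1/\delta$ while the transport time $(|\T|-\ell)/\beta_-$ is at most $|\T|/\beta_-$ and is insensitive to $\delta$. The only points requiring a bit of care are making the inequality $\tau_0<\tau_*$ strict and checking that the interval of early extinction is nonempty, both automatic once $\delta$ is large; in fact the same argument even produces the slightly sharper extinction time $\tau_0+(|\T|-\ell)/\beta_-$, so $\gamma$ could be allowed to be negative down to roughly $-|\omega|/(2|\T|)$.
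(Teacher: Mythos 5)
Your argument is correct and follows essentially the same route as the paper: both re-run Step 1 of Lemma \ref{Lem:Transport-Extinction} directly with $\varepsilon_0=K$ (no preliminary decay step), use that $\tau_0=K^\alpha/(\alpha\delta)$ scales like $1/\delta$ while $\tau_*$ and the transport time do not, and then choose $\delta$ of order $K^\alpha\inf|f'|/(\alpha\gamma|\T|)$ so that $\tau_0<\tau_*$ and $\tau_0\le\gamma|\T|/\inf|f'|$, concluding via Step 2. The only differences are bookkeeping ones (the paper shrinks $\gamma$ so that $\gamma|\T|/\inf|f'|<\tau_*$ and fixes $\delta$ exactly, whereas you take a maximum of two lower bounds on $\delta$ and note the strict inequality $T_0<T$ through $\ell>0$), so nothing essential is missing.
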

	\begin{proof}
		We do the same identifications as in the proof of Lemma \ref{Lem:Transport-Extinction}. Indeed, choosing $\gamma >0$ smaller if necessary, one
                can assume $ \gamma |\T| / \inf |f'| < \tau_*$, with
                $\tau_*$ as in \eqref{Def-T-*}. Thus, taking
	\[
		\delta = \frac{K^\alpha}{\alpha} \frac{\inf |f'|}{\gamma |\T|},   
	\]
	for solutions $u_0 \in L^\infty(\T)$ satisfying
        \eqref{eq:ci-small}, the time $\tau_0$ in \eqref{def-tau-0} is
        smaller than $\tau_*$ in \eqref{Def-T-*} and than $\gamma |\T|/
        |\inf f'|$, so that the proof of Lemma
        \ref{Lem:Transport-Extinction} easily yields that $u$ vanishes
        after the time $T_0 = (1+ \gamma) |\T|/\inf |f'|$. 
	\end{proof}
	Note that the time of extinction given by Proposition \ref{prop-control} can be made arbitrarily close to the critical time expected to control \eqref{Controlled}, given by $|\T| / \inf_{[-K,K]} |f'|$. In this sense, we have produced a non-linear feedback operator which controls \eqref{Controlled} in almost sharp time.

%%% Local Variables:
%%% mode: latex
%%% TeX-master: "damping"
%%% End:

\section{Proof of Theorem \ref{thm:Burgers-Main}: The degenerate case}
\label{sec:burgers}

The goal of this section is to discuss the case of a flux satisfying
$f'(0) = 0$, with $f''(0)\not =0$, and prove
Theorem~\ref{thm:Burgers-Main}. As said in the  
introduction, we first prove Theorem~\ref{thm:Burgers-Main} in
the case of a constant initial datum and a strictly convex flux
satisfying \eqref{Cond-Flux}. We then deduce the other instances
of Theorem \ref{thm:Burgers-Main} by using symmetry arguments and
comparison arguments. 
\subsection{Computation and estimates for the solution $u$ of (\ref{eq:sol-u-K}) for a strictly convex flux $f$ with $f'(0) = 0$}
We first assume that $f$ satisfies, for some $K>0$: 
\begin{equation}
		\label{Cond-Flux-convexe}
		f'(0) = 0 
		\quad \hbox{ and } \quad 
		\exists K >0, \quad \inf_{s \in [-K,K]} f''(s) >0, 
\end{equation}
and the damping is given by $a(x)=\delta \mathbf{1}_{(0,A)}$.

Let $u$ be the solution of
\begin{equation}
  \label{eq:sol-u-K}
  \d_t u+\d_x f(u)+\delta {\mathbf{1}}_{(0,A)} \frac{u}{|u|^\alpha} =0 \text{ in } \R_+ \times \T, \quad u_{\mid t=0}= K \text{ in } \, \T,
\end{equation}
where $K$ is the positive constant in \eqref{Cond-Flux-convexe}, $\delta >0$ and $(0,A) \subset \T$. 

Obviously, as $K > 0$, the
solution $u$ will stay non-negative for all times. 
%Besides, as $u_0 \in BV(\T)$, the solution $u$ belong to
%$L^\infty(0,\infty; BV(\T))$, see Proposition \ref{prop:cauchy-BV}. 
%
\\
We  develop a precise analysis of the characteristics curves of the
solution, illustrated in Figure \ref{fig:evo_burgers}. We depict in
Figure \ref{fig:evo_burgers} both the evolution of the solution and
its characteristic curves for the Burgers equation. The simulations
are made following the process described in Section~\ref{sec:num}. The
numerical parameters are $\alpha=1$, $u_0(x)=K=1.25$, $A=0.25$,
$\delta =1$, $\delta x = \delta t = 5\cdot 10^{-5}$ and the final time
is $T_f=10$. The dashed line indicates the location of the support of
$a$. It appears that the solution becomes zero inside the support of
$a$, with corresponding characteristics becoming vertical straight
lines.  
\begin{figure}[h]
%  \centering
\begin{tabular}{cc}
\input{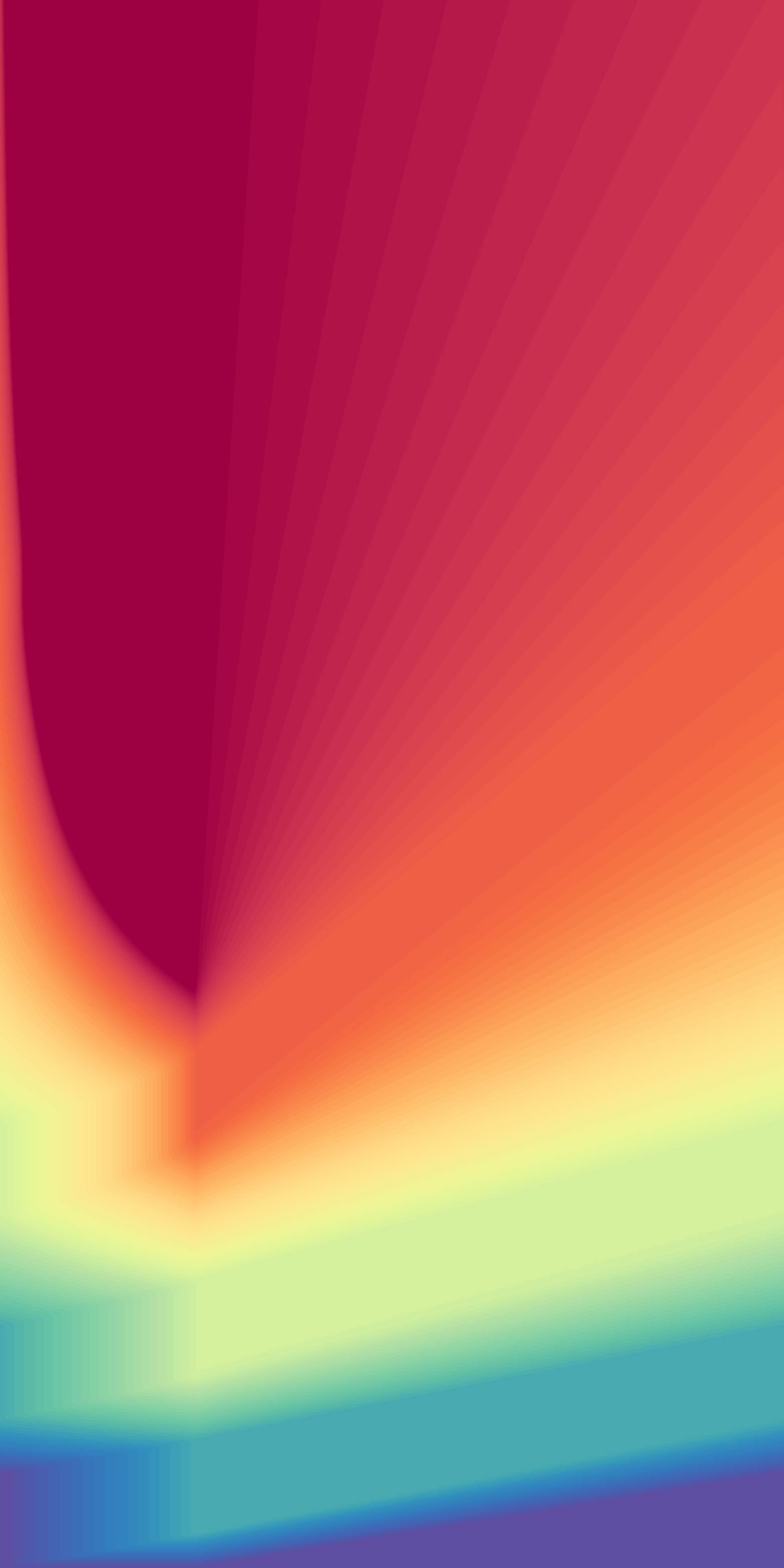}
  &
\input{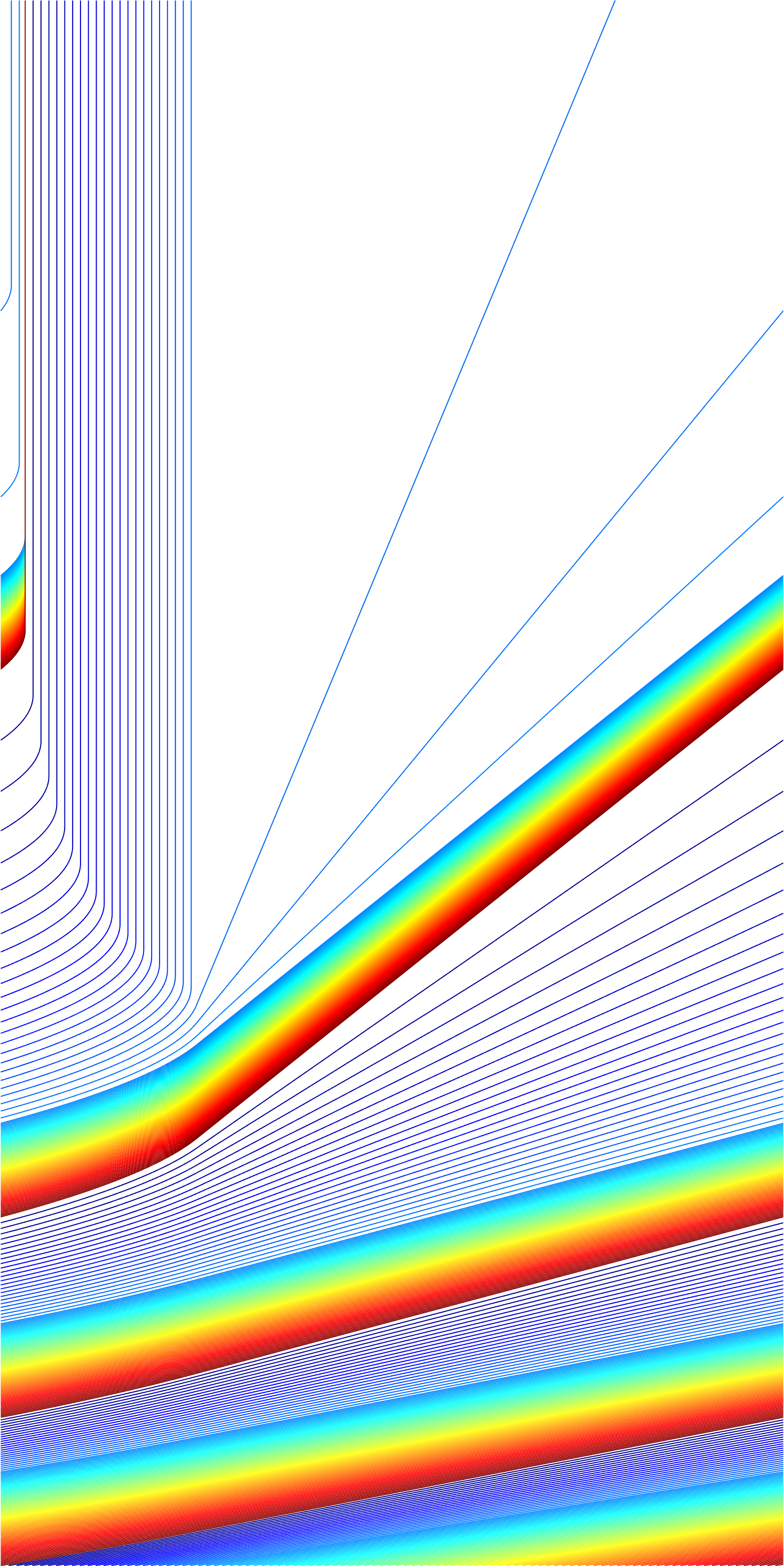}\\
\end{tabular}   
  \caption{Evolution of the solution (left) and of the characteristic curves (right) for Burgers flux}
  \label{fig:evo_burgers}
\end{figure}
Our goal in the following is to prove rigorously that this observed behavior indeed coincides with the theoretical one.

\subsubsection{Formal computation of the solution $u$ of \eqref{eq:sol-u-K} for a strictly convex flux $f$ with $f'(0) = 0$.}\label{subsec:comput}
We shall compute $u$ using characteristics as if the solution
were regular. We will fully justify this assumption later (in Subsection \ref{subsubsec:reg-u}, Lemma \ref{Lem-Reg-u0}).
\\
For $\alpha < 1$, when $u$ is smooth, for $(t_0, x_0) \in \R_+ \times \T$, the characteristics are given by 
\begin{equation}
	\label{eq:characteristics}
	\left\{
		\begin{array}{l}
			\displaystyle \frac{dX}{dt}(t,t_0, x_0) = f'(u(t,X(t,t_0, x_0))), \quad t \ge 0, 
			\smallskip
			\\
			X(t_0,t_0,x_0) = x_0,
		\end{array}
	\right.
\end{equation}
and the solution $u$ along the characteristics satisfies:
\begin{equation}
	\label{eq:Explicit-u}
	\frac{d}{dt} \left(u(t,X(t,t_0,x_0)) \right) = - a(X(t,t_0,x_0)) \frac{u(t,X(t,t_0,x_0))}{|u(t,X(t,t_0,x_0))|^{\alpha}}.
\end{equation}
The solutions of \eqref{eq:characteristics}--\eqref{eq:Explicit-u} are then solutions of 
\begin{equation}
	\label{Characterics-AlphaLe1}
		\frac{d}{dt} 
				\begin{pmatrix} 
				X(t,t_0, x_0) \\ u(t,X(t,t_0,x_0)) 
				\end{pmatrix}
			= F_\alpha(X(t,t_0,x_0), u(t,X(t,t_0,x_0))), 
			\quad t \geq 0, 
\end{equation}
where
\begin{equation}
	\label{Def-F-alpha}
	F_\alpha(X,u) = \begin{pmatrix}f'(u) \\ -a (X)
          u/|u|^\alpha\end{pmatrix}. 
\end{equation}
Of course, when $\alpha = 1$, similar computations can be performed as
long as the solution $u$ stays positive, but the corresponding
definition of $F_\alpha$ for $\alpha = 1$ should be made more specific
when $u$ vanishes. We thus introduce   
\begin{equation}
	\label{Def-F-1}
	F_1(X,u) = \begin{pmatrix}f'(u) \\ -a (X) \Sign (u) \end{pmatrix}, 
\end{equation}
where $\Sign$ is defined in \eqref{Def-Sign-u}, and the corresponding counterpart of \eqref{Characterics-AlphaLe1} should then read as 
\begin{equation}
	\label{Characterics-Alpha=1}
		\frac{d}{dt} 
				\begin{pmatrix}
				X(t,t_0, x_0) \\ u(t,X(t,t_0,x_0)) 
				\end{pmatrix}
			\in F_1(X(t,t_0,x_0), u(t,X(t,t_0,x_0))), 
			\quad t \geq 0.
\end{equation}
\medskip
In the computations given afterward, we will use the fact that as
$u_0(x) = K\geq 0$, the solution $u$ of \eqref{eq:sol-u-K} stays
non-negative, so that we can in fact write $u/|u|^\alpha = u^{1-
  \alpha}$, which will make the various expressions slightly easier. 

\noindent {\it On the interval $(A,1)$.} As $a$ vanishes on $(A,1)$,
\eqref{eq:Explicit-u} implies that the solution $u$ stays constant
along the characteristics in $(A,1)$. Therefore, if we choose $x_0 =
A$, we get  
\[
	u(t,X(t,t_0, A)) = u(t_0, A), \quad \hbox{ as long as } t \mapsto X(t,t_0,A) \hbox{ stays smaller than } 1, 
\]
and therefore
\[
	X(t,t_0, A) = A + (t-t_0) f'(u(t_0, A))\quad \hbox{ for } t \in \left[t_0, t_0 + \frac{1-A}{f'(u(t_0, A))} \right].
\]
Thus, we can write, for all $t \ge 0$, 
\begin{equation}
	\label{eq:interaction-t0-t1}
	u\left(t + \frac{1-A}{f'(u(t,A))}, 0 \right) 
	= 
	u\left(t + \frac{1-A}{f'(u(t,A))}, 1 \right) 
	= 
	u(t, A).
\end{equation}
Let us finally note that easy computations show that
\begin{equation*}
	\forall t \in \left[0, \frac{1-A}{f'(K)} \right], 
	\quad
	u(t,0) = u(t,1) = K.
\end{equation*}
\\
{\it On the interval $(0,A)$.} We deal with this case as before. But
now, as long as the characteristic $t \mapsto X(t,t_0, 0)$ defined for
$t\ge t_0$ stays in $[0,A]$, we have 
\begin{equation*}
	u(t,X(t,t_0,0)) = \left(u(t_0, 0)^\alpha - \delta \alpha (t-t_0) \right)_+^{1/\alpha}.
\end{equation*}
Therefore, the characteristic $X(t,t_0,0)$ reaches $x = A$ for the
first solution $t \ge t_0$ (if any) of  
\begin{equation*}
	\int_0^{t-t_0} f'\left( \left(u(t_0, 0)^\alpha - \delta \alpha \tau \right)_+^{1/\alpha}\right) \, d\tau = A,
\end{equation*}
for which we have 
\begin{equation*}
	u(t, A) = \left(u(t_0, 0)^\alpha - \delta \alpha (t-t_0) \right)_+^{1/\alpha}.
\end{equation*}
For $t \ge 0$, such that
\begin{equation*}
	\int_0^{t} f'\left( \left(K^\alpha - \delta \alpha \tau \right)_+^{1/\alpha}\right) \, d\tau < A,
\end{equation*}
we get 
\begin{equation*}
	u(t,A) = \left(K^\alpha - \delta \alpha t \right)_+^{1/\alpha}.
\end{equation*}

\subsubsection{Justification of the above formulae}\label{Subsec-Justifications}
When the solution $u$ is smooth and strictly positive, all the above computations are fully justified. 

%Furthermore, if $u$ has only $\mathscr{C}^0$ regularity, the above formal computations can be made fully rigorous by using the result \cite{Daf77}. Strictly speaking, the work \cite{Daf77} studies the case of a source term in the class $\mathscr{C}^1$, which is not the case under consideration. One can however check that we can have the counterpart of Theorem 3.3 in \cite{Daf77} in our case. 

Besides, for $\alpha \in (0,1]$, we only have \emph{a priori}
existence results for solutions of \eqref{Characterics-AlphaLe1} or
\eqref{Characterics-Alpha=1} due to Cauchy--Peano Theorem. Due to
\cite[Chapter 2 Section 10 Theorem 1]{Filippov}, we also have forward
uniqueness of the solutions of \eqref{Characterics-AlphaLe1} as long as $X$
stays in $(0,A)$ or as long as $X$ stays in $(A,1)$, as the function
$F_\alpha$ satisfies the following one-sided Lipschitz condition:
there exists $C >0$ such that for all $(X_1,u_1) \in \R^2$, $(X_2,
u_2) \in \R^2$ with $|u_1|, |u_2| \leq K$, and $(X_1, X_2) \in (0,A)^2
\cup (A,1)^2$,  
\begin{equation}
	\label{1-sided-Lips}
	((X_1,u_1) - (X_2, u_2)) \cdot (F_\alpha(X_1,u_1) - F_\alpha(X_2,u_2) ) \leq C |(X_1,u_1) - (X_2, u_2)|^2.
\end{equation}
The uniqueness across the set $\{X = 0\}$ in our setting will follow
from the fact that the solution $u$ of \eqref{eq:sol-u-K} with
constant initial datum $u_0(x) = K>0$ stays non-negative for all times,
and strictly positive at $x = 0$ for all times (see
Section~\ref{subsubsec:reg-u}), so the characteristics $t \mapsto 
X(t,t_0,x_0)$, when meeting  $\{X = 0\}$, will simply follow the
dynamics in $(0,A)$. This argument can be invoked similarly when
characteristics meet the set $\{X = A\}$ while $u$ stays
positive. However, as we will see, there will be some time at which
$u(t,A)$ vanishes. There, uniqueness should also hold across the set
$\{X = A\}$ simply by continuity of  $F_\alpha(x, 0)$ across $\{X =
A\}$, at least for $\alpha \in (0,1)$. This is however less clear to
prove, especially when turning to the case $\alpha = 1$. 

Thus, to properly justify the above computations, we  construct explicitly the solution $u$ of \eqref{eq:sol-u-K}  using the characteristics formulae above. In turn, this will guarantee the characteristics formulae given above. Note that, strictly speaking, our arguments construct \emph{a} solution of \eqref{eq:sol-u-K}, but by uniqueness of the admissible solution of \eqref{eq:sol-u-K}, see Proposition \ref{prop:cauchy}, this solution is \emph{the} solution of \eqref{eq:sol-u-K} with initial datum $u_0(x) = K$.

\subsubsection{Regularity of the solution $u$.}\label{subsubsec:reg-u}
The goal of this section is to prove the following regularity result on the solution $u$ of \eqref{eq:sol-u-K} with $u_0(x) = K$.
\begin{lemma}
	\label{Lem-Reg-u0}
	Let $f$ and $K$ as in  \eqref{Cond-Flux-convexe},  $\delta >0$ and $(0,A) \subset \T$.
	
%	 and set $\varepsilon \in (0,K]$ the solution, if it exists, of 
%	\begin{equation}
%		\label{def-varepsilon}
%		\int_0^\infty f'( (\varepsilon^\alpha - \delta \alpha \tau)_+^{1/\alpha} )\ d \tau = A.
%	\end{equation}
%		
%%	Then the solution $u$ of \eqref{eq:flux-general}--\eqref{eq:ci} with $u_0(x) = K$ satisfies the following regularity properties: 
%%	%
%%	\begin{itemize}
%%		\item $u$ is continuous, $u \in \mathscr{C}^0([0, \infty) \times \T)$, 
%%		\item If we set 
%%			$$ \To = \inf \{ t \in [0,\infty), \, u(t,0) \leq \varepsilon\},$$
%%			the solution $u$ is piecewise $\mathscr{C}^1([0,\To]\times \T)$.
%	\end{itemize}
%	%

	Then the solution $u$ of \eqref{eq:sol-u-K} satisfies the following regularity properties: $u\in \mathscr{C}^0([0, \infty) \times \T)$,  $u$ is  piecewise $\mathscr{C}^1([0,\infty) \times \T)$, and we have the more precise result: 
	\begin{itemize}
		\item The set $\mathscr{Z} = \{(t,x) \in [0,\infty) \times \T, \, \text{s.t.} \,u(t,x) = 0\}$, if not empty, is a closed set of $[0,\infty) \times (0,A]$ whose boundary is globally Lipschitz and piecewise $\mathscr{C}^1$. 
		\item For all bounded open subset $\Omega$ such that
                  $\overline\Omega \subset ([0,\infty) \times \T
                  \setminus \mathscr{Z})$, there exists a finite
                  number of smooth ($\mathscr{C}^1$) curves
                  $\mathcal{C}_i$, which may intersect only
                  transversally, such that $u$ is $\mathscr{C}^1$ in
                  the adherence of each of the connected component of
                  $\Omega \setminus \mathcal{C}_i$. 
		\item For all bounded open subset $\Omega \subset
                  ([0,\infty) \times \T)$,  there exists a finite
                  number of   curves
                  $\mathcal{C}_i$ (globally Lipschitz and piecewise
                  $\mathscr{C}^1$), which may intersect only
                  transversally, 
                  such that $u$ is $\mathscr{C}^1$ in each of the
                  connected component of $\Omega \setminus
                  \mathcal{C}_i$. 
                 % \item $t \mapsto u(t,0)$ decays. 
                  % \item $t \mapsto u(t,A)$ decays.
                  \item For all $t \geq 0$, $x \mapsto u(t,x)$ is non-decreasing on $[A, 1]$.
	\end{itemize}
\end{lemma}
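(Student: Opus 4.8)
The plan is to build the solution $u$ of \eqref{eq:sol-u-K} by hand, integrating the characteristic system, and then to read the four regularity assertions directly off the resulting formulas; by uniqueness of the admissible solution (Proposition~\ref{prop:cauchy}) the function so constructed is \emph{the} solution. \textbf{Set-up.} Since $f'(0)=0$ and $\inf_{[-K,K]}f''>0$, the map $f'$ is strictly increasing on $[-K,K]$ with $f'(s)>0$ for $s\in(0,K]$, and by Proposition~\ref{prop:comparaison} one has $0\le u\le K$, so inside $(0,A)$ we may write $u/|u|^\alpha=u^{1-\alpha}$. I would treat the two arcs $(0,A)$ and $(A,1)$ separately: on $(A,1)$, where $a\equiv0$, the value is transported unchanged along straight characteristics of slope $f'(\text{value})$; on $(0,A)$ the value decays along a characteristic, from $v_0$ at an entry time $t_0$, as $(v_0^\alpha-\delta\alpha(t-t_0))_+^{1/\alpha}$, while its abscissa is $x_0+\int_{t_0}^t f'((v_0^\alpha-\delta\alpha(s-t_0))_+^{1/\alpha})\,ds$. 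The one-sided Lipschitz inequality \eqref{1-sided-Lips} gives forward uniqueness of characteristics \emph{within} each arc, and the crossings of $\{x=0\}$ and $\{x=A\}$ are matched by hand, using continuity of $f'$ (resp. of $F_\alpha(\cdot,0)$) there together with the fact — established along the way — that $u$ stays positive at $x=0$.

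\textbf{The recursion and the zero set.} The central quantity is $d(v):=\frac{1}{\delta}\int_0^v\frac{f'(w)}{w^{1-\alpha}}\,dw$: finite (as $f'(w)\sim f''(0)w$ near $0$ and $\alpha>0$), smooth, strictly increasing, with $d(0)=0$ and $d'(v)=f'(v)/(\delta v^{1-\alpha})\to0$ as $v\to0^+$; it is the distance a characteristic entering $(0,A)$ with value $v$ covers before its value reaches $0$. When $d(v)\ge A$, such a characteristic exits $(0,A)$ at a time $T(v)$ (defined by $\int_0^{T(v)}f'((v^\alpha-\delta\alpha\tau)^{1/\alpha})\,d\tau=A$) carrying the value $\phi(v):=(v^\alpha-\delta\alpha T(v))^{1/\alpha}$, with $\phi\in\mathscr{C}^1$ and $0<\phi(v)<v$; it then loops around $(A,1)$ unchanged and re-enters $\{x=0\}$ after the time $(1-A)/f'(\phi(v))$. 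Iterating this, the trace of $u$ on $\{x=0\}$, as a function of time, is a concatenation of the constant $K$, a decreasing $\mathscr{C}^1$ rarefaction segment (the $\phi$-images of the fan the initial datum creates in $(0,A)$), a plateau at $K_1:=\phi(K)$, a rarefaction down to $K_2:=\phi(K_1)$, and so on, with all junction times computed explicitly from the $T(\cdot)$'s and the loop times; since $\phi$ has no positive fixed point, $K_n\downarrow0$, so the entering value eventually drops below $v^*:=d^{-1}(A)$. From that moment, characteristics reach $u=0$ at an abscissa $d(v)\in(0,A)$ and become vertical (where $u\equiv0$ thereafter), and, later-entering characteristics carrying smaller values, the zero region $\mathscr{Z}$ grows leftward from $x=A$, which it reaches at a finite time since the value-$v^*$ characteristic arrives at $x=A$ exactly as its value hits $0$. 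Its lower boundary is the graph $t=\theta(x)$ on $(0,A)$, $\theta(x)$ being the first time at which the abscissa $x$ becomes $0$; $\theta$ is strictly decreasing, piecewise $\mathscr{C}^1$ with finitely many kinks on any bounded interval (from the plateau/fan junctions), and $\theta'(x)\to-\infty$ as $x\to0^+$, so near its upper end the boundary is better seen as a graph $x=g(t)$ with $g$ strictly decreasing, $g(t)\to0^+$ and $g'\to0$. Hence $\partial\mathscr{Z}$ is globally Lipschitz and piecewise $\mathscr{C}^1$; $\mathscr{Z}=\{u=0\}$ is closed since $u$ is continuous; and $\mathscr{Z}\subset[0,\infty)\times(0,A]$ because $u$ stays $>0$ at $x=0$ and on all of $(A,1)$, where the value is only transported through and never absorbed.

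\textbf{Reading off the conclusions.} On any bounded open $\Omega$ the construction uses only finitely many separating curves — the lines $\{x=0\}$ and $\{x=A\}$, the characteristics delimiting the successive plateaus and rarefaction fans (each $\mathscr{C}^1$), and $\partial\mathscr{Z}$ — and on the closure of every connected component of their complement $u$ equals one of the closed-form expressions above, hence is $\mathscr{C}^1$ up to the boundary; this gives the three piecewise-$\mathscr{C}^1$ bullets (globally Lipschitz, piecewise $\mathscr{C}^1$ curves in general, smooth curves when $\overline\Omega$ avoids $\mathscr{Z}$, transversal intersections since distinct characteristics, or a characteristic and a vertical line, meet transversally). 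Continuity across all these curves — hence $u\in\mathscr{C}^0([0,\infty)\times\T)$ — is verified by matching the formulas; crucially there are \emph{no shocks}, because the trace of $u$ on $\{x=0\}$ is non-increasing in time, so on $(A,1)$ the faster characteristics stay downstream of the slower ones, a rarefaction pattern which also never collides with the value-$K$ initial characteristics at the front. This last fact is exactly the monotonicity bullet: at each $t$, $u(t,\cdot)$ on $[A,1]$ is obtained by transporting the constant initial value $K$ from the right and the time-non-increasing boundary values $u(\cdot,A)$ from $x=A$, with larger values downstream, so $x\mapsto u(t,x)$ is non-decreasing on $[A,1]$.

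\textbf{Main obstacle.} I expect the delicate part to be the bookkeeping around $\mathscr{Z}$ and as $t\to\infty$: proving that $u$ genuinely vanishes on the whole interval between the moving tip $g(t)$ of $\mathscr{Z}$ and $x=A$ (not merely along one vertical characteristic), that no characteristic re-enters that interval, yet $u$ remains strictly positive at $x=0$ and on $(A,1)$; controlling $\partial\mathscr{Z}$ near its upper end, where $d^{-1}$ has infinite slope and the curve has to be handled as the Lipschitz graph $x=g(t)$ rather than $t=\theta(x)$; and checking that the recursion $K_{n+1}=\phi(K_n)$ together with its junction times is well-ordered, so that only finitely many separating curves arise on any bounded set. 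The computations of $d$, $T$, $\phi$ and the junction times themselves are routine; the structural claims about $\mathscr{Z}$ are where the real work lies.
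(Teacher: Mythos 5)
Your proposal follows essentially the same route as the paper's proof: an explicit construction of $u$ by characteristics on the two arcs, with your $d$ and $v^*$ being exactly the paper's $g$ and $\varepsilon$, an iterative analysis of the monotone traces at $x=0$ and $x=A$, the zero set described through the death points of the low-energy characteristics, continuity from the absence of crossings (non-increasing traces, increasing $f'$), and identification with the unique admissible solution at the end. The one caveat is that the paper describes $\partial\mathscr{Z}$ parametrically by the entering time, $t_0\mapsto\bigl(t_0+u(t_0,0)^\alpha/(\delta\alpha),\,g(u(t_0,0))\bigr)$, rather than as a graph $t=\theta(x)$: plateaus of the trace $u(\cdot,0)$ at values below $v^*$ make $\theta$ jump and create vertical segments in the boundary, which is harmless for the globally Lipschitz, piecewise $\mathscr{C}^1$ conclusion but is not captured by a strictly decreasing function $\theta$.
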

The proof of Lemma \ref{Lem-Reg-u0} relies on the explicit
construction of $u$ using characteristics formula. As we will see in
the proof, some curve of strong $\mathscr{C}^1$ singularity (meaning
that at each point of the curve, the function $u$ on each side on the
curve cannot be both extended as a $\mathscr{C}^1$ function up to the
curve) 
may appear when the characteristic entering the zone in which the damping is active corresponds to a small value of $u$. In this case, characteristics may become vertical and merge after some time. This does not violate the forward uniqueness of the characteristics in the sense of Filippov. Still, we emphasize that when the characteristics merge, we cannot use them backward in time. This is in fact completely similar to the phenomenon which appears when solving the ODE \eqref{Model-ODE}.

To be more precise, we  introduce $\varepsilon \in (0,K]$ as the
solution, if it exists, of  
\begin{equation}
	\label{def-varepsilon}
		\int_0^\infty f'( (\varepsilon^\alpha - \delta \alpha \tau)_+^{1/\alpha} )\ d \tau = A,
\end{equation}
and $\To$ as 
\begin{equation}
	\label{Def-To}
	 \To = \inf \{ t \in [0,\infty), \, u(t,0) \leq \varepsilon\}.
\end{equation}
The role of $\varepsilon$ will appear clearly in the proof
below. Loosely speaking, when $u(t,0) \leq \varepsilon$, the
characteristic issued from $(t,0)$ will never reach the set $\{x =
A\}$: in other words, the  characteristic issued from $(t,0)$ is of
too low energy to overpass the damping set. 

The curves $\mathcal{C}$ on which $\mathscr{C}^1$ singularities may appear will be constructed with the solution $u$ itself. In fact, these curves will simply be 
\begin{equation}
	\label{SingularCurves}
	\left\{
		\begin{array}{l}
	\displaystyle \mathcal{C}_0: t \mapsto (t,X(t,0,0)),
	\\
	\displaystyle \mathcal{C}_1: t \mapsto (t, X(t,0,A)),
	\\ 
	\displaystyle \mathcal{C}_2: t \mapsto (t,0), 
	\\
	\displaystyle \mathcal{C}_3: t \mapsto (t,A), 
		\end{array}
	\right.
\end{equation}
to which, if $\To < \infty$, one should add the boundary of the set $\mathscr{Z}$ (if $\mathscr{Z} \neq \emptyset$), that will be shown to be delimited by
\begin{multline}
	\label{Eq-mathcalC}
	\mathcal{C} : t_0 \in (\To,\infty) \mapsto (t(t_0),x(t_0)), 
	\\
	\text{ where } t(t_0) = t_0 + \frac{(u(t_0,0))^\alpha}{\delta \alpha}, \text{ and } x(t_0) = \int_0^\infty f'((u(t_0,0)^\alpha - \delta \alpha \tau)_+^{1/\alpha}) \, d\tau,
\end{multline}
and a part of $\mathcal{C}_3$, namely $\{(t,A), \hbox{ for } t \geq t_*\}$ for  
\begin{equation}
	\label{def-t-*}
	t_* = \To + \frac{\varepsilon^\alpha}{\delta \alpha}.
\end{equation}
The various discontinuity curves, $\mathscr{Z}$ region and times $T_*$ and $t_*$ are displayed in Figure \ref{fig:discont_curves}.
\begin{figure}[htbp]
  \centering
\begin{tikzpicture}
  \node at (0,0){\includegraphics[width=.5\textwidth]{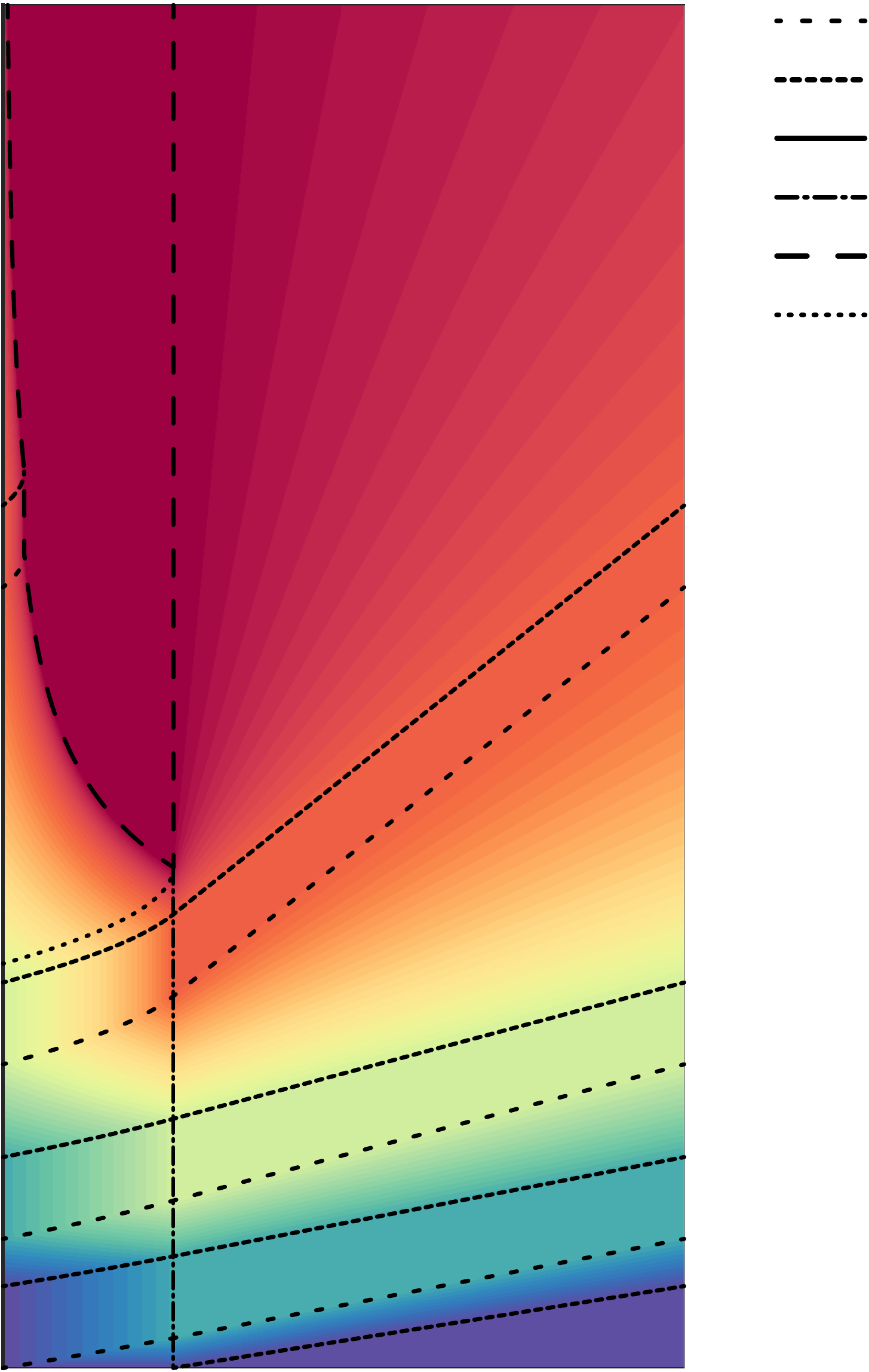}};
% % % Dimensions du repere
% \def\xmin{-5} \def\xmax{5} \def\ymin{-7} \def\ymax{7}
% % Grilles
% \draw [step=0.1,gray!30,very thin]  (\xmin,\ymin) grid (\xmax,\ymax);
% \draw [step=1,gray,thin] (\xmin,\ymin) grid (\xmax,\ymax);
% \draw [step=5,thin,color=blue!50]  (\xmin,\ymin) grid (\xmax,\ymax); 
% \foreach \x in {\xmin,...,\xmax}
%     \node[color=gray!80] at (\x,-0.25) {\x};
% \foreach \y in {\ymin,...,\ymax}
%     \node[color=gray!80] at (-0.25,\y) {\y};

      \node at (-2.5,2.5) {$\mathscr{Z}$};
      \node[below] at (-3.25,-5.1) {$0$};
      \node[below] at (-1.95,-5.1) {$A$};
      \node[below] at (1.88,-5.1) {$1$};

      \draw[>=latex,->] (1.9,-5.1) -- (3,-5.1);
      \node[below] at (3,-5.1) {$x$};

      \draw[>=latex,->] (-3.22,5.12) -- (-3.22,6.2);
      \node[left] at (-3.22,6.2) {$t$};
      
      \node[right] at (3.3,4.97) {$\mathcal{C}_0$};
      \node[right] at (3.3,4.52) {$\mathcal{C}_1$};
      \node[right] at (3.3,4.1) {$\mathcal{C}_2$};
      \node[right] at (3.3,3.65) {$\mathcal{C}_3$};
      \node[right] at (3.3,3.21) {$\mathcal{C}$};
      \node[right] at (3.3,2.78) {$X(t,T_*,0)$ for $t\in[T_*,t_*]$};

      \node[left] at (-3.22,-1.98) {$T_*$};
      \node[left] at (-3.22,-1.4) {$t_*$};
     % \draw[dashed] (-3.4,-0.95) -- (3.67,-0.95);
     % \node[left] at (-3.4,-0.95) {$T^*$};
\end{tikzpicture}
  
  \caption{Discontinuity curves, $\mathscr{Z}$ region and times $T_*$ and $t_*$.}
  \label{fig:discont_curves}
\end{figure}
\begin{proof} 

\noindent{\bf $\bullet$ Preliminary computations: Existence and uniqueness of $\varepsilon$ in \eqref{def-varepsilon}.}
We first emphasize that condition \eqref{def-varepsilon} is satisfied by at most one parameter $\varepsilon >0$ as the map 
\begin{equation}
	\label{Def-g-v}
	g : v \mapsto \int_0^\infty f'((v^\alpha - \delta \alpha \tau)_+^{1/\alpha}) \, d\tau
\end{equation}
is well-defined and continuous on $[0,K]$, $g(0) = 0$, and $g$ is strictly increasing. Indeed, if $0\leq v_1 < v_2 \leq K$, using the fact that $f'$ is strictly increasing on $[0,K]$ as $f$ is assumed to be strictly convex,
\begin{align*}
	g(v_1) = \int_0^\infty f'((v_1^\alpha - \delta \alpha \tau)_+^{1/\alpha}) \, d\tau
	& =  \int_0^{v_1^\alpha/(\delta \alpha)} f'((v_1^\alpha - \delta \alpha \tau)_+^{1/\alpha}) \, d\tau
	\\
	& < \int_0^{v_1^\alpha/(\delta \alpha)} f'((v_2^\alpha - \delta \alpha \tau)_+^{1/\alpha}) \, d\tau \leq  g(v_2).
\end{align*}
Thus, if $g(K) \geq A$, there exists a unique $\varepsilon \in [0,K]$ satisfying \eqref{def-varepsilon}. (In case $g(K) < A$, there is no $\varepsilon \in [0,K]$ satisfying \eqref{def-varepsilon}.) Note in particular that, when there exists $\varepsilon \in (0,K]$ satisfying \eqref{def-varepsilon}, for all $v \in (\varepsilon,K]$, $g(v) \geq A$, so that 
\begin{equation}
	\label{Bounds-eps-K}
	\forall v \in (\varepsilon, K], \quad A \leq f'(v) \frac{v^\alpha}{\delta \alpha} \leq f'(K) \frac{K^\alpha}{\delta \alpha}.
	\medskip
\end{equation}

\noindent{\bf $\bullet$ Construction of $u$.}
In our construction below, we distinguish the cases $t < \To$ and $t
\geq \To$. In particular, according to the definition \eqref{Def-To}
of $\To$, for all $t \in [0, \To)$, $u(t, 0) > \varepsilon$. 

We restrict ourselves to the case $\To >0$, since the case $\To = 0$ can
be  easily adapted from the case $\To > 0$ and $t \geq \To$. Hence we
assume that $\To>0$, and deal separately with the cases $t \leq \To$
and $t \geq \To$. 
%

%The curves $\mathcal{C}$ on which $\mathscr{C}^1$ singularities may appear will be constructed with the solution $u$ itself. In fact, these curves will simply be 
%%
%\begin{equation}
%	\label{SingularCurves}
%	\left\{
%		\begin{array}{l}
%	\displaystyle \mathcal{C}_0: t \mapsto (t,X(t,0,0)),
%	\\
%	\displaystyle \mathcal{C}_1: t \mapsto (t, X(t,0,A)),
%	\\ 
%	\displaystyle \mathcal{C}_2: t \mapsto (t,0), 
%	\\
%	\displaystyle \mathcal{C}_3: t \mapsto (t,A), 
%		\end{array}
%	\right.
%\end{equation}
%%
%to which, for $t \geq \To$, one should add the curve $\mathcal{C}$ in \eqref{Eq-mathcalC} delimiting the boundary of the set $\mathscr{Z}$ (if $\mathscr{Z} \neq \emptyset$).

We will not point out below, along the construction of $u$, that the
curves  delimiting the $\mathscr{C}^1$ singularities are exactly the
ones in \eqref{SingularCurves}--\eqref{Eq-mathcalC}, but it will
appear clearly from the construction of $u$.

As a matter of fact, we construct a sequence of time-space domains which
eventually cover $[0,\infty) \times \T$, and a function $u$ on each of
these time-space domains, such that $u$ is a globally $\mathscr{C}^0$
function there, and is a piecewise $\mathscr{C}^1$ function, where the
curves of $\mathscr{C}^1$ discontinuities are given by
\eqref{SingularCurves}--\eqref{Eq-mathcalC}.  
Besides, apart from these curves of singularities, the solution $u$ is
constructed to satisfy the characteristics equations
\eqref{Characterics-AlphaLe1} in the case $\alpha \in (0, 1)$, or
\eqref{Characterics-Alpha=1} in the case $\alpha = 1$ away from the
set $\mathscr{Z}$. The regularity of the curves of $\mathscr{C}^1$
discontinuities then allows to check easily that the solution $u$
constructed this way solves \eqref{eq:sol-u-K} in the sense of
Definition~\ref{Def-F-alpha} for $\alpha \in (0,1)$ or of
Definition~\ref{Def-F-1} when $\alpha = 1$.  

In the proof below, with a slight abuse of terminology, we call
``smooth functions'' functions which are $\mathscr{C}^1$. 
\medskip
\\
{\bf Case $t \leq \To$.}
As the velocities involved for the solution $u$ should belong to $[0,f'(K)]$, using the light cone of the equation, $u$ should be fully determined in 
\[
	\mathscr{T}_0 = \{(t,x)\, \hbox{ with } t \in [0,A/f'(K)] \hbox{ and } x \in [t f'(K),A] \}
\]
by $u(0, \cdot)_{\mid [0,A]}$. For $(t,x) \in \mathscr{T}_0$, we shall therefore look for $x_0$ such that $X(t,0,x_0) = x$, that is 
\[
	x_0 + \int_0^t f'\left((K^\alpha - \delta \alpha \tau)_+^{1/\alpha} \right) \, d \tau = x.
\]
For fixed $t \in [0,A/f'(K)] $, it is easily seen that the map 
\[
[0,A]\ni x_0\mapsto 	k_t(x_0) = x_0 + \int_0^t f'\left((K^\alpha - \delta \alpha \tau)_+^{1/\alpha} \right) \, d \tau
\]
is smooth, and strictly increasing, with image containing $[t f'(K),A]$. Therefore, for all $(t,x) \in \mathscr{T}_0$, there exists a unique $x_0 \in [0,A]$ such that 
$X(t,0,x_0) = x$, so that we set $u(t,x) = (K^\alpha - \delta \alpha
t)_+^{1/\alpha}$. As $t \leq A/f'(K)$, the bound \eqref{Bounds-eps-K} allows to guarantee that for all $(t,x) \in \mathscr{T}_0$, $K^\alpha - \delta \alpha t$ is strictly positive, so that $u \in \mathscr{C}^1(\mathscr{T}_0)$
and $t \mapsto u(t,A)$ is smooth on
$[0,A/f'(K)]$, non-increasing, and strictly positive.
\\
We then consider the triangle 
\[
	\mathscr{T}_1 = \{(t,x) \, \hbox{ with } t \in [0,1/f'(K)]
        \hbox{ and } x \in [t f'(K),1] \} ,
\]
and construct $u$ in this set. Of course, as we have
already shown that $u$ is smooth in the set $\mathscr{T}_0$, we
 only focus on the set $\mathscr{T}_1 \setminus
\mathscr{T}_0$. We  determine $u$ in $\mathscr{T}_1 \setminus
\mathscr{T}_0$ from  
$u(\cdot,A)_{\mid [0,A/f'(K)]} $  and $u(0, \cdot)_{\mid
  [A,1]}$. Introducing $X(t,0,A) = A + t f'(u(0,A)) = A + tf'(K)$, for
$(t,x) \in \mathscr{T}_1 \setminus \mathscr{T}_0$,  we set $u(t,x) = K$ if $x \in
[X(t,0,A), 1]$, while if $x \in [ \max \{ t f'(K), A
\}, X(t,0,A)]$, we find a time $t_0 \in [0,A/f'(K)]$ such that
$X(t, t_0, A) = x$, and set $u(t,x) = u(t_0,A)$. Indeed, this can be
achieved since for $t \in 
[0, 1/f'(K)]$ and $x \in [ \max \{ t f'(K), A \}, X(t,0,A)]$, finding
$t_0 \in [0,\min\{t, A/f'(K)\}]$ such that $X(t,t_0, A) = x$ amounts to solving the
equation 
\[
	A + (t-t_0) f'(u(t_0,A)) = x.
\]
But the map 
\begin{equation}
	\label{Def-h-t}
	h_t :  t_0 \mapsto A + (t-t_0) f'(u(t_0,A))
\end{equation}
satisfies: 
\begin{itemize}
	\item $h_t$ is strictly decreasing on the interval $[0, \min\{t, A/f'(K)\} ]$. This is a consequence of the fact that $t_0 \mapsto u(t_0, A)$ is non-increasing and strictly positive on $[0,A/f'(K)]$ and that $f$ is strictly convex with $f'(0) =0$. Besides, for all $t_0 \in [0, \min\{t, A/f'(K)\} ]$,
	\[
		h_t'(t_0) \leq - f'(u(A/f'(K),A)) < 0.
	\]
	\item $h_t(0) = A +t f'(K) = X(t,0,A)$.
	\item If $t \leq A/f'(K)$, $h_t(t) = A$, and if $t \geq A/f'(K)$,
	\begin{align*}
		h_t\left( \frac{A}{f'(K)} \right) &= A+ \left(t - \frac{A}{f'(K)} \right) f'\left( u\Big(\frac{A}{f'(K)},A\Big) \right)
		\\
		&	 \leq A+ \left(t - \frac{A}{f'(K)} \right) f'(K) \leq t f'(K).
	\end{align*}
	\item $h_t$ is smooth.
\end{itemize} 
 It then follows that $h_t$ is a diffeomorphism from the interval $[0,
 \min\{t, A/f'(K)\} ]$ to its image, which contains $[\max\{ t
 f'(K),A\},X(t,0,A)]$, so that we can write, for $t \in [0, 1/f'(K)]$
 and $x$ in the interval $[ \max \{ t f'(K), A \}, X(t,0,A)]$, $u(t,x)
 = u(h_t^{-1}(x), A)$.  
 
 These formula easily show that the function $u$ is smooth for $(t,x)
 \in \mathscr{T}_1$ with $x \in [\max\{ t f'(K),A\},X(t,0,A)]$, and it
 is clear that $u$ is smooth for $(t,x) \in \mathscr{T}_1$ with $x \in
 [X(t,0,A),1]$. However, though it is clear that $u$ is continuous
 along the curve $t\mapsto (t,X(t,0,A))$, $u$ may not be locally
 $\mathscr{C}^1$ in a neighborhood of this curve, even if it can be
 extended as $\mathscr{C}^1$ functions up to this curve from each
 side. It follows that $u$ is piecewise
 $\mathscr{C}^1(\mathscr{T}_1)$, and that $t \mapsto u(t,1)$ is a
 piecewise $\mathscr{C}^1$ non-increasing function on $[0,1/f'(K)]$,
 which remains strictly positive on the interval $[0,1/f'(K)]$. 

 It is also easy to check that for all $t \in [0, A/f'(K)]$, $x\mapsto
 u(t,x)$ is non-decreasing on $[A,1]$. 

In fact, this is the starting point of an iterative argument showing that for all $n \in \N$ with $T_{n} = n/f'(K)  < \To$, one can construct a solution $u$ of \eqref{eq:sol-u-K} in 
\[
	\mathscr{T}_{2n+1} = \{(t,x) \in [0,(n+1)/f'(K)] \times [0,1] \hbox{ with } x \in [\max\{ tf'(K) -n ,0\}, 1]\},
\]
such that:
\begin{itemize}
	\item  $u$ is globally $\mathscr{C}^0$ and piecewise $\mathscr{C}^1$ on $\mathscr{T}_{2n+1}$
	\item $t\mapsto u(t,A)$ is strictly positive and 
          non-increasing on $[0 ,(n+A )/f'(K)]$. 
	\item $t\mapsto u(t,1)$ is strictly positive non-increasing on $[0,(n+1)/f'(K)]$.
	\item For all $t \in [0,T_n+A/f'(K)]$, $x \mapsto u(t,x)$ is non-decreasing on $[A, 1]$.
	
\end{itemize}
Indeed, these properties are already proved for $n =0$. Let us show that if they hold for $n \in \N$, they also hold for $n+1$ provided $T_{n+1} < \To$. 
\\
Using the above properties for $n$, we first consider the equation in the trapezoid 
\[
	\mathscr{T}_{2n+2} = \{(t,x) \in [0,(n+1+A)/f'(K)] \times [0,A] \hbox{ with } x \in [\max\{t f'(K)- (n+1),0\}, A]\},
\]
 and use the boundary condition $u(\cdot,0)_{\mid [0,(n+1)/f'(K)]}$
 and $u(0, \cdot)_{\mid [0,A]}$ to construct $u$ in $\mathscr{T}_{2n+2}$. In order to do this, we set, for $t \geq 0$, 
 \[
 	x_0(t) = \min\left\{ \int_0^t f'((K^\alpha - \delta \alpha \tau)_+^{1/\alpha})\, d\tau, A\right\}.
 \]
  
 Now, let us fix $t \in [0, (n+1+A)/f'(K)]$ and $x \in [0,A]$. If $x > x_0(t)$, we define $u(t,x) = (K^\alpha - \delta \alpha t)_+^{1/\alpha}$. If $x < x_0(t)$, we look for $t_0 \in [0,t]$ such that $X(t,t_0,0) = x$, i.e. such that 
 \[
 	\int_0^{t- t_0} f'\left((u(t_0,0)^\alpha - \delta \alpha \tau)_+^{1/\alpha} \right) \, d\tau = x. 
 \]
 We thus define, for $t _0 \in [0,t]$,
 \[
 	g_t(t_0) =  \int_0^{t- t_0} f'\left((u(t_0,0)^\alpha - \delta \alpha \tau)_+^{1/\alpha} \right) \, d\tau.
 \]
It is not difficult to check that, for $t \leq (n+1+A)/f'(K)$, $g_t$ enjoys the following properties: 
\begin{itemize}
	\item $g_t$ is strictly decreasing on $[0, \min\{ t, T_n\}
          ]$. This is a consequence of the fact that $t \mapsto u(t,
          1)$ ($= u(t,0)$) is non-increasing and strictly larger than
          $\varepsilon$ on $[0,T_n]$. Besides, $g_t$ is piecewise
          $\mathscr{C}^1$ on $ [0, \min\{ t, T_n\} ]$ and for all
          $t_0$ such that $g_t(t_0) \le A$ and for which $g_t$ is
          differentiable, 
	\begin{align*}
		g_t'(t_0) & \leq -f'((u(t_0,0)^\alpha - \delta \alpha (t-t_0))_+^{1/\alpha}) 
		\\
		& \leq - f'((u(\min\{ t, T_n\},0)^\alpha - \delta \alpha  (\min\{ t, T_n\}-t_0))_+^{1/\alpha}).
	\end{align*}
	Note that, if $f'(((u(\min\{ t, T_n\},0)^\alpha - \delta
        \alpha  (\min\{ t, T_n\}-t_0))_+^{1/\alpha}) = 0$ and
        $g_t(t_0) \leq A$, then $g(u(\min\{ t, T_n\},0)) \leq A$,
        which is not compatible with $u(\min\{t,T_n\},0) >
        \varepsilon$. Thus there exists $\gamma >0$ such that for all
        $t_0$ such that $g_t(t_0) \leq A$ and for which $g_t$ is
        differentiable, $g_t'(t_0) \leq - \gamma$. 
	\item $g_t(t) = 0$.
	\item $g_t(0) = x_0(t)$ if $\int_0^t f'((K^\alpha - \delta \alpha \tau)_+^{1/\alpha})\, d\tau \leq A$, and is larger than $A$ otherwise.
\end{itemize} 
 One then easily shows that for $t \leq (n+1+A)/f'(K)$, the map $g_t$
 is a piecewise $\mathscr{C}^1$ function from $[0, \min\{t,T_n\}]$ to
 its image, which contains $[0,x_0(t)]$. Besides, there exists a
 unique $t_0(t,A)$ such that $g_t$ is a piecewise $\mathscr{C}^1$
 diffeomorphism from $[t_0(t,A), \min\{t,T_n\}]$ to $[0,x_0(t)]$. We
 can then define $u$ for $t \leq (n+1+A)/f'(K)$ and $[0,x_0(t)]$ by  
 \[
 	u(t,x) = (u(g_t^{-1}(x),0)^\alpha - \delta \alpha (t - g_t^{-1}(x)))_+^{1/\alpha},
\]
and, for $x \in x_0(t)$, by $u(t,x) = (K^\alpha - \delta \alpha t)_+^{1/\alpha}$.
 
As $g_t$ depends smoothly on the time parameter $t$, this defines $u$ as a globally $\mathscr{C}^0$ and piecewise $\mathscr{C}^1$ function in $\mathscr{T}_{2n+2}$ .

We then check that $t \mapsto u(t,A)$ is strictly positive, piecewise $\mathscr{C}^1$ and non-increasing on $[0, (n+1+A)/f'(K)]$. It is obviously $\mathscr{C}^1$ and decreasing in $\{t, \, x_0(t) < A\}$, as $u(t,A) = (K^\alpha - \delta \alpha t)_+^{1/\alpha}$, the positivity coming from the fact that $K > \varepsilon$ and $t \leq T_{n+1} \leq \To$. For $\{t, x_0(t) = A\}$, which is an interval of the form $[t_A,(n+1+A)/f'(K)]$, $u(t,A)$ is given by the formula 
\[
 	u(t,A) = (u(g_t^{-1}(A),0)^\alpha - \delta \alpha (t - g_t^{-1}(A)))_+^{1/\alpha}. 
\]

Now, one can check that for $t^a, t^b \in [t_A,(n+1+A)/f'(K)]$ such that $t^a < t^b $, defining $t_0^a$ and $t_0^b$ by the formula
\[
	g_{t^a}(t_0^a) = A = g_{t^b} (t_0^b), \quad \text{ i.e. }\quad  t_0^a = g_{t^a}^{-1} (A), \, t_0^b = g_{t^b}^{-1} (A),
\]
the decay of $t_0 \mapsto u(t_0, 0)$ on $[0, T_n]$ implies that
\[
	t_0^a \leq t_0^b, \quad \text{ and } \quad t^a - t_0^a \geq t^b - t_0^b.
\]
Consequently, since $t_0 \mapsto u(t_0, 0)$ is non-increasing on $[0,T_{n+1}]$, we immediately have that $t \mapsto u(t,A)$ is non-increasing on $[0, (n+1+A)/f'(K)]$. It is also obviously piecewise $\mathscr{C}^1$ on $[0, (n+1+A)/f'(K)]$. The fact that $u(t,A)$ is strictly positive comes from the fact that $t\mapsto u(t,0)$ stays strictly larger than $\varepsilon$ for $t \leq T_n$.
We then construct the solution $u$ in the set $\mathscr{T}_{2n+3}
\setminus \mathscr{T}_{2n+2}$, using the information given by
$u(\cdot, A)_{\mid [0, (n+1+A)/f'(K)]}$ and $u(0, \cdot)_{\mid
  [A,1]}$. Again, as when working in $\mathscr{T}_1$, we construct the
solution $u$ using characteristics and setting, for $(t,x) \in
\mathscr{T}_{2n+3}$ with $x \in [A, \min\{A+ t f'(K),1\} ]$,  
\[
	u(t,x) = u(h_t^{-1}(x), A).
\]
The other case, corresponding to $x \in [\min\{A+ t f'(K),1\} ,1]$, lies in fact in $\mathscr{T}_1$, so regularity issues have been dealt with before. We only need to check that for all $t \in [0, T_{n+1}+A/f'(K)]$, $x \mapsto u(t,x)$ is non-decreasing on $[A,1]$: this is obvious if $x \in [\min\{A+ t f'(K),1\}, 1]$ as $u(t,x) = K$ there; when $x \in [A, \min\{A+ t f'(K),1\} ]$, the above formula and the facts that $t \mapsto u(t,A)$ is non-increasing and that $h_t^{-1}$ is strictly decreasing imply that $x \mapsto u(t,x)$ is non-increasing on $[A, \min\{A+ t f'(K),1\} ]$.
%We can thus use the same arguments as before to show that 
%using the only difficulty that may occur comes from the behavior of the solution close to the characteristic starting at $t = 0$ from $x = A$, but this characteristic was already handled as it is included in $\mathscr{T}_1$. 
Therefore, all the items in the above property also hold for $n +1$. 

We can thus iterate these arguments while $T_n = n / f'(K) < \To$. If
$\To = \infty$, this concludes Lemma \ref{Lem-Reg-u0}. If $\To <
\infty$, we perform a similar iteration in the trapeze  
\[
	\mathscr{T} = \{(t,x) \in [0,\To+1/f'(K)] \times [0,1] \hbox{ with } x \in [\max\{ (t-\To) f'(K) ,0\}, 1]\}, 
\]
constructing a function $u$ in $\mathscr{T}$ such that:
\begin{itemize}
	\item $u$ is a solution of \eqref{eq:sol-u-K}, is piecewise $\mathscr{C}^1$ on $\mathscr{T}$ and globally $\mathscr{C}^0$ on $\mathscr{T}$.
	\item $t\mapsto u(t,A)$ is strictly positive and 
          non-increasing on $[0 ,\To+A )/f'(K)]$. 
	\item $t\mapsto u(t,0)$ is strictly positive non-increasing on $[0,\To+1/f'(K)]$.
	\item $u(\To, 0) = \varepsilon$.
	\item For all $t \in [0,T_*+A/f'(K)]$, $x \mapsto u(t,x)$ is non-decreasing on $[A, 1]$.

\medskip
\end{itemize}
\noindent{\bf Case $t \geq \To$.} The difficulty when $t \geq \To$ is to show that the solution $u$ remains continuous in time-space. In order to do this, we introduce $(x_*(t),u_*(t))$ given by 
\begin{equation}
	\label{Def-x-*-u-*}
	\left\{
		\begin{array}{l}
			\displaystyle \frac{d}{dt} 
				\begin{pmatrix} x_* \\ u_* \end{pmatrix}
				= 
				F_\alpha
				\begin{pmatrix} x_* \\ u_* \end{pmatrix}, 
				\quad t \geq \To, 
				\text{ if } \alpha \in (0,1), 
		\\
				\text{ or }
		\\
				\displaystyle \frac{d}{dt} 
				\begin{pmatrix} x_* \\ u_* \end{pmatrix}
				\in F_1
				\begin{pmatrix} x_* \\ u_* \end{pmatrix}, 
				\quad t \geq \To, 
				\text{ if } \alpha = 1, 
		\end{array}
	\right.
	\qquad  
	\begin{pmatrix} x_*(\To) \\ u_*(\To) 
	\end{pmatrix}= 
	\begin{pmatrix} 0 \\ \varepsilon 
	\end{pmatrix}, 
\end{equation}
where $F_\alpha$ is defined in
\eqref{Def-F-alpha}--\eqref{Def-F-1}. Here,  $x_*$ corresponds to
the characteristics $X(t,\To,0)$  $t \geq \To$ and $x_*(t) \leq
A$. Note that, due to the choice of $\varepsilon$ in
\eqref{def-varepsilon}, there exists a time $t_*$ ($=  \To +
\varepsilon^\alpha/(\delta \alpha)$), such that the solution
$(x_*,u_*)$ satisfies 
\begin{equation}
	\label{prop-t-*}
\quad 
	\forall t \in [\To, t_*), \quad x_*(t) \in [0,A),
	\quad \hbox{ and } 	\quad\forall t \geq t_*, \quad (x_*(t) , u_*(t)) = (A, 0), 
\end{equation}
thus guaranteeing the uniqueness of the solution of \eqref{Def-x-*-u-*}.

We then introduce the following sets:
\begin{align*}
	& \mathscr{R}_0 = \{ (t,x) \in [0,\infty) \times [0,A], \text{ such that, if } t \geq \To, \, x \in [x_*(t),A] \}, 
	\\
	& \mathscr{R}_1 = \{ (t,x) \in [0, \infty) \times [A,1] \}, 
	\\
	& \mathscr{R}_2 = \{ (t,x), \text{ with } t \geq \To, \, x \in [0,x_*(t)] \},  
\end{align*}
and, similarly as before, for $n \geq 0$, 
\begin{align*}
	& \mathscr{T}^*_{2n} = \{(t,x) \in [0,\To+(n+A)/f'(K)] \times [0,1] \hbox{ with } x \in [ ((t-\To) f'(K) -n)_+, A]\}, 
	\\
	& \mathscr{T}^*_{2n+1} = \{(t,x) \in [0,\To+(n+1)/f'(K)] \times [0,1] \hbox{ with } x \in [ ((t-\To)f'(K) -n)_+, 1]\}.
\end{align*}
As before, we construct iteratively a solution $u$ of \eqref{eq:sol-u-K} in $\mathscr{T}_{2n+1}^*\setminus \mathscr{R}_2$  for all $n \in \N$ such that:
\begin{itemize}
	\item $u$ is piecewise $\mathscr{C}^1$ and globally $\mathscr{C}^0$ in $\mathscr{T}_{2n+1}^* \setminus \mathscr{R}_2$.
	\item $t\mapsto u(t,1)$ is strictly positive non-increasing on $[0,\To+(n+1)/f'(K)]$ and piecewise $\mathscr{C}^1$.
	\item for all $t \geq \To$ $u(t,x_*(t)) = u_*(t)$, where $(x_*,u_*)$ is the solution of \eqref{Def-x-*-u-*}.
	\item For all $t \in [0,\To+(n+A)/f'(K)]$, $x \mapsto u(t,x)$ is non-decreasing on $[A, 1]$.
\end{itemize}
Of course, the previous paragraph shows that this is true for $n = 0$. Let us then assume that these properties are true for some $n \in \N$ and show that they are then true for $n+1$. 

Similarly as before, we work first on $\mathscr{T}^*_{2n+2} \cap \mathscr{R}_0$. The construction of the function $u$ in the set $\mathscr{T}_{2n+2}^* \cap \mathscr{R}_0$ can then be handled as for $\mathscr{T}_{2n+2}$, and following the same arguments, we easily get that the function $u$ there is piecewise $\mathscr{C}^1$ in $\mathscr{T}_{2n+2}^* \cap \mathscr{R}_0$, and that $t \mapsto u(t,A)$ is a non-increasing function on $[0,\To+ (n+1+A)/f'(K)]$, strictly positive while $t \leq t_*$, and vanishing for $t \geq t_*$.

The construction of the solution $u$ in the set $\mathscr{T}_{2n+3}^* \cap \mathscr{R}_1$ can then be done similarly as the one corresponding to $\mathscr{T}_{2n+3}$, and following the same lines, we get that the solution $u$ is piecewise $\mathscr{C}^1$ in $\mathscr{T}_{2n+3}^* \cap \mathscr{R}_1$. We nevertheless make the proof slightly more precise: for $(t,x) \in \mathscr{T}_{2n+3}^* \cap \mathscr{R}_1 \setminus \mathscr{T}_{1}$, $u(t,x)$ is constructed by 
\[
	u(t,x) = u(h_t^{-1} (x),A), 
\]
where $h_t$ is given by \eqref{Def-h-t}. 
In particular, to establish the strict positivity of $u(t,1)$ for $t \leq \To+(n+2)/f'(K)$, we just remark that $t_0 = h_t^{-1}(x)$ is equivalent to $A+ (t- t_0) f'(u(t_0,A)) = 1$, so that $u(t,x) = u(t_0,A)$ cannot be zero.

The continuity of the function $u$ constructed above in $\mathscr{T}_{2n+3}^* \setminus \mathscr{R}_2$ follows easily from the continuity of $u$ across the interfaces of $\mathscr{T}_{2n+2}^* \cap \mathscr{R}_0$, and $\mathscr{T}_{2n+3}^* \cap \mathscr{R}_1$.

The fact that for all $t \in [0, T_* + (n+1+A)/f'(K)]$, $x \mapsto u(t,x)$ is non-decreasing in $[A,1]$ follows as before.
\medskip

Our goal now is to construct $u$ in $\mathscr{R}_2$ as a piecewise $\mathscr{C}^1$ function such that $u$ is globally continuous on $[0, \infty) \times \T$ and solves \eqref{eq:sol-u-K} in $[0,\infty) \times \T$.

As $t \mapsto u(t,0)$ is strictly positive and non-increasing on $[0,\infty)$, we can show, similarly as before that, for $(t,x) \in \mathscr{R}_2$, the map
\[
 	g_t(t_0) =  \int_0^{t- t_0} f'\left((u(t_0,0)^\alpha - \delta \alpha \tau)_+^{1/\alpha} \right) \, d\tau
 \]
 has the following properties:
\begin{itemize}
	\item $g_t$ is decreasing on $[\To, t]$, takes values in $[0,x_*(t)]$, and is surjective on $[0, x_*(t)]$,
	\item For all $(t_0^a, t_0^b) \in [\To, t]^2$, $g_t(t_0^a) =g_t(t_0^b)$ implies $t_0^a = t_0^b$ or $(u(t_0^a,0)^\alpha - \delta \alpha (t - t_0^a))_+ = (u(t_0^b,0)^\alpha - \delta \alpha (t - t_0^b))_+  $. 
\end{itemize}
It thus allows to set, for $(t,x) \in \mathscr{R}_2$, 
\begin{equation}
	\label{u-T-*-R-2n+2}
	u(t,x) = (u(g_{t}^{-1}(x), 0)^\alpha - \delta \alpha (t- g_{t}^{-1}(x)))_+^{1/\alpha},
\end{equation}
where $g_t^{-1} (x)$ denotes any $t_0 \in [\To, t]$ such that $g_t(t_0
) = x$. Note that, according to the second item above, the definition
above does not depend on the choice of $t_0 $ such that $g_t(t_0) =
x$. Besides, $u$ defined this way is continuous on $\mathscr{R}_2$ as
one can easily check, and if $t \geq \To$, $u(t,x_*(t)) = u_*(t)$,
where $(x_*, u_*)$ is the solution of \eqref{Def-x-*-u-*}.

We then remark that, for $t \geq \To$, and $\To \leq t_0^a < t_0^b \leq t$, 
\begin{align*}
	g_t(t_0^a) & =  \int_0^{t- t_0^a} f'\left((u(t_0^a,0)^\alpha - \delta \alpha \tau)_+^{1/\alpha} \right) \, d\tau
	\\
	& \geq  \int_0^{t- t_0^b} f'\left((u(t_0^a,0)^\alpha - \delta \alpha \tau)_+^{1/\alpha} \right) \, d\tau
	\\
	& \geq  \int_0^{t- t_0^b} f'\left((u(t_0^b,0)^\alpha - \delta \alpha \tau)_+^{1/\alpha} \right) \, d\tau = g_t(t_0^b). 
\end{align*}
In particular, analyzing the case of equality in the above estimates, we easily get that for $(t,x) \in \mathscr{R}_2$ such that $u(t,x) \neq 0$, $t_0(t,x) = g_t^{-1}(x) $ is uniquely defined and 
\begin{equation}
	\label{Est-t-0-t-x}
	t - t_0(t,x) \leq \frac{u(t_0(t,x), 0)^\alpha}{\delta \alpha} \leq \frac{\varepsilon^\alpha}{\delta \alpha} .
\end{equation}
Besides, $g_t$ is piecewise $\mathscr{C}^1$ locally around $t_0(t,x)$ and
\begin{multline*}
	g_t'(t_0) = - f'\left((u(t_0,0)^\alpha - \delta \alpha (t- t_0)^{1/\alpha} \right)
	\\
	+ u(t_0,0)^{\alpha - 1} \partial_t u(t_0,0) \int_0^{t- t_0} f'\left((u(t_0,0)^\alpha - \delta \alpha \tau)^{1/\alpha} \right) (u(t_0,0)^\alpha - \delta \alpha \tau)^{1/\alpha - 1}  \, d\tau.
\end{multline*}
As $u(t,x) \neq 0$ and $t_0 \mapsto u(t_0,0)$ decays, this implies that 
\[
	g_t'(t_0(t,x)) \leq - f' \left((u(t_0,0)^\alpha - \delta \alpha (t- t_0(t,x)))^{1/\alpha} \right) = - f'(u(t,x)) <  0, 
\]
and the bound is uniform in a neighborhood of $t$ and $x$. As $t(t_0,x)$ is defined by $g_t(t_0(t,x)) = x$, we see that this implies that $t_0$ is piecewise $\mathscr{C}^1$ for $(t,x) \in \mathscr{R}_2$ such that $u(t,x) \neq 0$, so that the definition \eqref{u-T-*-R-2n+2} shows that $u$ is piecewise $\mathscr{C}^1$ for $(t,x) \in \mathscr{R}_2$ such that $u(t,x) \neq 0$.

It is thus also interesting to determine the area $\mathscr{Z} = \{ (t,x) , \text{ s.t. } u(t,x) = 0\} \cap \mathscr{R}_2$. 

For $(t,x) \in \mathscr{R}_2$, it is clear, from the relation
$g_t(t_0(t,x)) = x$ and from the fact that $t_0 \mapsto u(t_0, 0)$
decays, that for $t^a < t^b$ such that $(t^a,x)$ and $(t^b,x)$ are in
$\mathscr{R}_2$,  
\[
	t_0(t^a,x) \leq t_0(t^b,x), \quad \text{ and } \quad 
	t^a - t_0(t^a,x) \leq t^b - t_0(t^b,x),
\]
so that we easily derive from the decay of $t_0 \mapsto u(t_0,0)$ that 
\[
	u(t^a,x) \leq u(t^b,x).
\]
It follows that, for all $x \in [0,A]$, the map $t \mapsto u(t,x)$ decays while $(t,x) \in \mathscr{R}_2$. 

We can in particular define, for $x \in (0,A]$, the time $t_*(x) =
\inf\{ t, \, s.t. \, u(t,x) = 0\}$ (which may be
infinite). If $t_*(x)$ is finite, for $t < t_*(x)$, $u(t,x) \neq 0$,
so $t_0(t,x)$ is well-defined and unique, and is an increasing
function of time which is bounded by $t_*(x)$. Thus, the limit of
$t_0(t,x)$ as $t \to t_*^-$ exists, and we call it $t_{0,*}(x)$. We
then easily get that  
\[
	u(t_{0,*}(x), 0)^\alpha - \delta \alpha (t_*(x) - t_{0,*}(x)) = 0, 
\]
so that 
\[
	x = g_{t_*(x)} (t_{0,*}(x)) = g\left(u(t_{0,*}(x), 0)\right), 
\]
where $g$ is defined in \eqref{Def-g-v}.
Let us now note that there exists only one $t_0$ such that 
\begin{equation}
	\label{t-0-vs-x}
	u(t_0,0)^\alpha - \delta \alpha (t_*(x) - t_0) = 0 \quad \text{ and } \quad x = g(u(t_0,0)).
\end{equation}
Indeed, the second equation determines uniquely $u(t_0,0)$, and the first equation then determines uniquely $t_0$. Thus, if $t_0$ satisfies \eqref{t-0-vs-x}, $t_0 = t_{0,*}(x)$ and $t_*(x) = t_0 + u(t_0,0)^\alpha/(\delta \alpha)$. 

This suggests to study the parametric equation
\begin{multline}
	\label{Eq-mathcalCBis}
	\mathcal{C} : t_0 \in (\To,\infty) \mapsto (t(t_0),x(t_0)), 
	\\
	\text{ where } t(t_0) = t_0 + \frac{(u(t_0,0))^\alpha}{\delta \alpha}, \text{ and } x(t_0) = g(u(t_0,0)).
\end{multline}
(This definition of course coincides with the definition of $\mathcal{C}$ in \eqref{Eq-mathcalC}, recall the definition of $g$ in \eqref{Def-g-v}.)
It is clear that by construction $u(t(t_0),x(t_0)) = 0$ for all $t_0 \geq \To$. Besides, for $(t,x) \in \mathscr{R}_2$, if there exists $T_0 \geq \To$ such that $x = g(u(t_0,0))$, then 
\begin{itemize}
	\item if $t \geq \inf \{t(t_0), \, \text{ for } \, t_0\, \,  \text{ s.t. } \, x = g(u(t_0,0)) \}$, then $u(t,x) = 0$; 
	\item if $t \geq \inf \{t(t_0), \, \text{ for } \, t_0\, \,  \text{ s.t. } \, x = g(u(t_0,0)) \}$, then $u(t,x) > 0$. 
\end{itemize}
These statements follow immediately from the decay of $t \mapsto u(t,x)$ and the non-negativity of $u$. It follows that 
\begin{equation}
	\label{Def-Zero-Set}
	\partial \mathscr{Z} = \mathcal{C} \cup \{(t,A), \text{ s.t. } t \geq t_* \}.
\end{equation}
We remark that, as $t_0 \mapsto u(t_0,0)$ is piecewise $\mathscr{C}^1$ and strictly positive, except at singularities (which are in finite number in any bounded interval), we have 
\[
	\frac{d}{dt_0} \left(\! t_0 + \frac{(u(t_0,0))^\alpha}{\delta \alpha}, g(u(t_0,0)) \! \right)
	= 
	\left(\! 1 + \frac{u(t_0,0)^{\alpha -1} \partial_t u(t_0,0)}{\delta} , 
	g'(u(t_0,0)) \partial_t u(t_0,0) \!\right), 
\]
so that 
\[
	\left| \frac{d}{dt_0} \left(t_0 + \frac{(u(t_0,0))^\alpha}{\delta \alpha}, g(u(t_0,0)) \right)
	\right| \neq 0.
\]
This proves that the tangent, hence the normal, of the curve $\mathcal{C}$  is well-defined except at a locally finite number of points. 
This indicates that $\mathcal{C}$ is a piecewise $\mathscr{C}^1$ parametric curve with finite limits at singularity points. It is thus a globally Lipschitz and piecewise $\mathscr{C}^1$ parametric curve.
\medskip

We have thus proved the regularity properties stated in Lemma \ref{Lem-Reg-u0}. In particular, in all connected component of $([0,\infty) \times \T) \setminus (\cup_{i=1}^4 \mathcal{C}_i \cup \mathcal{C})$, the function $u$ is $\mathscr{C}^1$ and satisfies, by construction 
\[
	\partial_t u + \partial_x (f(u)) + h(t,x) = 0,  
\]
where $h(t,x) = \delta {\mathbf 1}_{(0,A)} u(t,x)/|u(t,x)|^\alpha$ if
$u(t,x) >0$ and $ h(t,x) = 0$ if $u(t,x) = 0$. Since $u$ is also
continuous in the whole set $[0,\infty) \times \T$, a straightforward
application of the integration by parts formula shows that the
function $u$ we constructed above is an admissible solution of
\eqref{eq:sol-u-K} in $[0,\infty) \times \T$. 
\end{proof}

\subsubsection{Dynamics of the solution $u$ in the case of a strictly convex flux.} 

\begin{lemma}
	\label{Lem-To-Finite}
		Let $f$ and $K$ as in  \eqref{Cond-Flux-convexe}, $\delta >0$ and $\omega= (0,A) \subset \T$. Then the solution $u$ of \eqref{eq:sol-u-K} satisfies:
	\begin{itemize}
		\item[$(i)$] $\To$ defined in \eqref{Def-To} is finite.
		\item[$(ii)$] For all $t \ge t_*$ (defined in \eqref{def-t-*}), there exists an open subinterval $\omega(t) \subset \omega$ such that $u(t)_{\mid \omega(t)} = 0$, and 
	\begin{equation*}
		|\omega \setminus \omega(t) | \le \frac{C}{t^{1+\alpha}}, 
		\quad 
		\| u(t) \|_{L^\infty(\T)} \le \frac{C}{t}.
	\end{equation*} 
	\end{itemize}
\end{lemma}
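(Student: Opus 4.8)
The proof rests on the explicit characteristics representation of $u$ constructed in Lemma~\ref{Lem-Reg-u0} and on three soft facts established there: $u\ge 0$, the map $t\mapsto u(t,0)$ is non-increasing, and $u(t,A)=0$ for all $t\ge t_*$. (As in the statement I assume that $\varepsilon$ in \eqref{def-varepsilon} exists, i.e.\ $g(K)\ge A$.) I write $\beta_+=\sup_{[-K,K]}f'=f'(K)>0$, $c_0=\inf_{[-K,K]}f''>0$, $C_2=\sup_{[0,K]}f''$, so that $c_0w\le f'(w)\le C_2w$ on $[0,K]$; hence $(f')^{-1}(y)\le y/c_0$ for small $y$, and, after the substitution $w=(v^\alpha-\delta\alpha\tau)^{1/\alpha}$ in \eqref{Def-g-v}, $g(v)=\frac1\delta\int_0^v f'(w)\,w^{\alpha-1}\,dw\le \frac{C_2}{\delta(1+\alpha)}\,v^{1+\alpha}$ for $v\in[0,K]$.

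\emph{Part $(i)$.} I argue by contradiction: suppose $u(t,0)>\varepsilon$ for all $t\ge 0$. Put $t_0=0$, $v_0=u(0,0)=K$, and follow the value at $x=0$ through successive loops around $\T$. The characteristic issued from $(t_n,0)$, with $v_n:=u(t_n,0)$, crosses $\omega=(0,A)$ in transit time $s_n$ determined by $\int_0^{s_n}f'\big((v_n^\alpha-\delta\alpha\tau)^{1/\alpha}\big)\,d\tau=A$, reaching $x=A$ with value $v_n'$ satisfying $(v_n')^\alpha=v_n^\alpha-\delta\alpha s_n$; by \eqref{eq:interaction-t0-t1} this value reappears at $x=0$ at time $t_{n+1}=t_n+s_n+(1-A)/f'(v_n')$ with $u(t_{n+1},0)=v_n'=:v_{n+1}$, so under our assumption the recursion never stops. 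Since the integrand above is $\le f'(v_n)\le\beta_+$, we get $s_n\ge A/\beta_+$, hence $v_{n+1}^\alpha\le v_n^\alpha-\delta\alpha A/\beta_+$; and since $s_n\le K^\alpha/(\delta\alpha)$ and $(1-A)/f'(v_n')\le(1-A)/f'(\varepsilon)<\infty$, the times $t_n$ stay finite. Thus $v_n^\alpha$ would decrease indefinitely by a fixed positive amount while remaining $\ge\varepsilon^\alpha$, a contradiction. Hence $\To<\infty$ (indeed $\To$ is reached within at most $\lceil K^\alpha\beta_+/(\delta\alpha A)\rceil$ loops).

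\emph{Part $(ii)$: the decay estimates.} The crux is $u(t,0)\le C_0/t$ for $t\ge t_*$. If $v:=u(t,0)>0$, trace the characteristic through $(t,0)$ backward: it carries the constant value $v$ across $(A,1)$ and meets $x=A$ at time $t'=t-(1-A)/f'(v)$, where $u(t',A)=v>0$. For $t$ above a fixed threshold one has $t'\ge 0$ (otherwise the characteristic would originate from the initial datum, forcing $v=K$ and $t$ small), hence $t'<t_*$ by Lemma~\ref{Lem-Reg-u0}, i.e.\ $f'(v)<(1-A)/(t-t_*)$ and $v\le (1-A)/\big(c_0(t-t_*)\big)$; on the remaining range $t\in[t_*,2t_*]$ use $u(t,0)\le\varepsilon$. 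The same trace-back controls $\|u(t)\|_{L^\infty(\T)}$: for $x\in(A,1)$, $u(t,x)=u(t',A)$ with $t'=t-(x-A)/f'(u(t,x))<t_*$, so $u(t,x)\le (1-A)/\big(c_0(t-t_*)\big)$; for $x\in(0,A)$, the backward characteristic hits $x=0$ at some $t_0\ge\To$ (for $t$ large, since a characteristic entering from $t_0<\To$ would have left $\omega$ before time $t$) with $t-t_0=\big(u(t_0,0)^\alpha-u(t,x)^\alpha\big)/(\delta\alpha)\le\varepsilon^\alpha/(\delta\alpha)=t_*-\To$, so $t_0\ge t/2$ and $u(t,x)\le u(t_0,0)\le C_0/t_0\le 2C_0/t$. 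Combined with the bound $\|u\|_\infty\le K$ on any compact time interval, this yields $\|u(t)\|_{L^\infty(\T)}\le C/t$ for all $t\ge t_*$.

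\emph{Part $(ii)$: the zero set.} For $x\in(0,A)$ with $u(t,x)>0$ and $t$ large, the backward characteristic reaches $x=0$ at $t_0\ge\To$ with $x\le\int_0^{t-t_0}f'\big((u(t_0,0)^\alpha-\delta\alpha\tau)^{1/\alpha}\big)\,d\tau\le g(u(t_0,0))$; since $t_0\ge t-(t_*-\To)\ge t/2$ as before, $u(t_0,0)\le 2C_0/t$, whence $x\le g(u(t_0,0))\le\frac{C_2}{\delta(1+\alpha)}(2C_0/t)^{1+\alpha}=:C_3/t^{1+\alpha}$. Therefore $u(t,\cdot)\equiv 0$ on $\omega(t):=(C_3/t^{1+\alpha},A)$, a non-empty open subinterval of $\omega$ once $C_3/t^{1+\alpha}<A$, with $|\omega\setminus\omega(t)|=C_3/t^{1+\alpha}$; for the finitely many leftover times $t\ge t_*$ one invokes the non-emptiness of the interior of $\{x:u(t,x)=0\}$ for $t>t_*$ (Lemma~\ref{Lem-Reg-u0}) and enlarges the constant. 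The main obstacle throughout is the bookkeeping that guarantees, for $t$ large, that the relevant backward characteristics genuinely issue from $x=A$ (resp.\ from $x=0$ at a time $\ge\To$) rather than from the initial datum, and that handles uniformly the remaining bounded range of times near $t_*$.
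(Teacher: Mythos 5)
Your proposal is correct and follows essentially the same route as the paper: part $(i)$ via the loop recursion of the value at $x=0$ along characteristics, and part $(ii)$ via backward tracing from $x=1$ together with the bound $x\le g(u(t_0,0))\lesssim u(t_0,0)^{1+\alpha}$ for the boundary of the zero set, all resting on the characteristics construction of Lemma~\ref{Lem-Reg-u0}. The only (harmless) deviations are local simplifications: for $(i)$ you derive the arithmetic decrement $v_{n+1}^\alpha\le v_n^\alpha-\delta\alpha A/\beta_+$ from the crude transit-time bound $s_n\ge A/\beta_+$, whereas the paper establishes a geometric contraction $u_{n+1}\le c_0u_n$, and you bound $g$ by a change of variables rather than the paper's convexity comparison.
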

	
\begin{proof}[Proof of item $(i)$ of Lemma~\ref{Lem-To-Finite}] The
proof of the first item of Lemma \ref{Lem-To-Finite} follows from the
analysis of the solution $u$ of \eqref{eq:sol-u-K} along the
characteristics $t\mapsto X(t, 0, 0)$, in particular when it crosses
the set $\{x = 0\}$. 

We thus introduce four sequences $(u_n)_{n \in \N}$, $(t_n)_{n \in \N}$, $(v_n)_{n \in \N}$, $(\tau_n)_{n \in \N}$, initialized by $u_0 = K$ and $t_0= 0$, and defined iteratively for $n$ such that $u_n > \varepsilon $ and $v_n > 0$ as follows: 
\begin{itemize}
	\item $\tau_{n}> t_n$ is the unique solution of 
	\[
		 \int_0^{\tau_{n} - t_n} f'( (u_n^\alpha - \delta \alpha \tau)_+^{1/\alpha} ) \ d \tau =A.
	\]
	\item $v_{n} = (u_n^\alpha - \delta \alpha (\tau_{n} -t_n))_+^{1/\alpha}$.
	\item $t_{n+1} = \tau_{n} + (1-A)/f'(v_{n})$.
	
	\item $u_{n+1} = v_{n}$.
\end{itemize}
These choices are made so that for all $n \in \N$ such that $u_n > \varepsilon$, $u(t_n,0) = u_n$, and $u(\tau_{n},A) = v_{n}$. 
%Consequently, due to the decay of $t \mapsto u(t,0)$  the sequences $(u_n)$ and $(v_n)$ are non-increasing.
%
\\
We also set $n_0$ the first integer (if any) for which $u_n <
\varepsilon$ or $v_n = 0$. This index $n_0$, if any, is in fact such
that $u_{n_0} \le \varepsilon$ and $v_{n_0} = 0$. Indeed, if $u_{n} >
\varepsilon$, one easily checks from the definition of $\varepsilon$
that $v_{n} >0$.  
\\
Note that we easily deduce from the formula of $v_n$ that 
\begin{equation*}
	\forall n \in \{0, \cdots, n_0 -1\}, \quad \delta \alpha (\tau_n - t_n) \le u_n^\alpha.
\end{equation*}
In order to study these sequences, it will be convenient to have a
good estimate on $\tau_{n} - t_n$ in terms of $u_n$ only.  We thus define
\begin{equation*}
	\beta_- = \inf_{[0,K]} \frac{f'(s)}{s}, 
	\quad 
	\beta_+ = \sup_{[0,K]} \frac{f'(s)}{s}, 
\end{equation*}
which are finite as $f'(0) = 0$ and which are both strictly positive
as $f$ is strictly convex. 
\\
%
%We then recall that $1/\alpha \ge 1$, so the convexity of $s \mapsto s^{1/\alpha}$ at the point $u^\alpha$ yields the following estimates: For all $u>0$, $\tau \ge 0$,
We then recall that $1/\alpha \ge 1$. Thanks to the convexity of the
function $s \mapsto s^{1/\alpha}$ at the point $u^\alpha$, its graph
on $[0, u^\alpha]$ is below the chord initiated from the origin,
i.e. $s\mapsto s u^{1- \alpha}$, and above its tangent $s \mapsto (1-
1/\alpha) u + s u^{1-\alpha}/\alpha$, which yields the following
estimates: for all $u>0$, $\tau \ge 0$, 
\begin{equation*}
	(u - \delta \tau u^{1- \alpha})_+
	\le
	 (u^\alpha - \delta \alpha \tau)_+^{1/\alpha} 
	 \le
	 (u - \delta \alpha \tau u^{1- \alpha})_+. 
\end{equation*}
Combining the above two  estimates, we infer
\[
	 \beta_-  \int_0^{\tau_{n} - t_n}  (u_n - \delta \tau u_n^{1- \alpha})_+ \, d\tau	 \le 
	 A 
	 \le \beta_+ \int_0^{\tau_{n} - t_n}  (u_n - \delta \alpha \tau u_n^{1- \alpha})_+ \, d\tau, 
\]
which implies in particular that 
\[
	A \ge 
	\left\{
		\begin{array}{ll}
			\displaystyle 
			\frac{\beta_-}{2  \delta} u_n^{1+\alpha} & \hbox{ if } u_n^\alpha \le \delta (\tau_{n} - t_n), 
			\smallskip\\
			\displaystyle 
			\beta_- u_n (\tau_{n} - t_n) - \frac{\delta \beta_-}{2} u_n^{1- \alpha} (\tau_{n} - t_n)^2\, & \hbox{ if } u_n^\alpha \ge \delta (\tau_{n} - t_n),
		\end{array}
	\right. 
\]
and
\[
	A \le 
	\left\{
		\begin{array}{ll}
			\displaystyle 
			\frac{\beta_+}{2 \alpha \delta} u_n^{1+\alpha} & \hbox{ if } u_n^\alpha \le \alpha \delta (\tau_{n} - t_n), 
			\smallskip\\
			\displaystyle 
			\beta_+ u_n (\tau_{n} - t_n) - \frac{\alpha \delta \beta_+}{2} u_n^{1- \alpha} (\tau_{n} - t_n)^2\, & \hbox{ if } u_n^\alpha \ge \alpha \delta (\tau_{n} - t_n).
		\end{array}
	\right. 
\]
We claim that we can deduce from this a lower bound on $\tau_n - t_n$ of the form 
\begin{equation*}
	\forall n \in \{0, \cdots, n_0 - 1\}, 
	\, 
	\alpha \delta (\tau_n - t_n) \ge u_n^\alpha \min\left\{\alpha,  1 - \sqrt{\left(1 - \frac{2 A \alpha \delta}{ \beta_+ K^{1+\alpha}}\right)_+}\right\}.
\end{equation*}
Indeed, this is obvious when $\tau_n - t_n \ge
u_n^\alpha/\delta$. Otherwise, when $\tau_n - t_n \le
u_n^\alpha/\delta$, we have $u_n^\alpha \ge \alpha \delta (\tau_{n} -
t_n)$ and thus one should have  
\[
	\frac{A \alpha \delta}{2 \beta_+ u_n^{1+\alpha}} \le
        \frac{\alpha \delta}{2} \frac{\tau_n - t_n}{u_n^\alpha} -
        \left(  \frac{\alpha \delta}{2} \frac{\tau_{n} -
            t_n}{u_n^\alpha}\right)^2. 
\]
But $u_n \le K$ as the sequence $u_n$ is non-increasing, so that 
\[
	\frac{A \alpha \delta }{2 \beta_+ K^{1+\alpha}} \le \left( \frac{\alpha \delta}{2} \frac{\tau_n - t_n}{u_n^\alpha}\right) -  \left(  \frac{\alpha \delta}{2} \frac{\tau_{n} - t_n}{u_n^\alpha}\right)^2.
\]
Consequently, if $A \alpha \delta/(2 \beta_+ K^{1+\alpha}) > 1/4$, this cannot happen. Besides, if $$A \alpha \delta/(2 \beta_+ K^{1+\alpha}) \le 1/4,$$ we obtain immediately that
\begin{equation*}
	\frac{\tau_n - t_n}{u_n^\alpha} \ge \frac{1}{\alpha \delta} \left( 1 - \sqrt{1 - \frac{2 A \alpha \delta}{ \beta_+ K^{1+\alpha}}}\right). 
\end{equation*}
It follows from this estimate and the definition of $v_n$ that there exists $c_0 < 1$ such that for all $n \in \{0, \cdots, n_0 - 1\}$, 
\[
	u_{n+1} = v_n \le c_0 u_n.
\]
Therefore,
\begin{equation*}
	\forall n \in \{0, \cdots, n_0 - 1\}, \quad u_{n} \le c_0^n K,
        \hbox{ and } v_n \le c_0^{n-1} K,
\end{equation*}
and there indeed exists $n_0 \in \N$ such that $u_{n_0} \le
\varepsilon$. One can also estimate $t_{n_0}$ and show that it is
finite, and thus $\To$ is  finite.
\end{proof}
\medskip

\begin{proof}[Proof of item $(ii)$ of Lemma~\ref{Lem-To-Finite}] In order to prove item $(ii)$ of Lemma \ref{Lem-To-Finite}, we look at the solution $u$ in $[t_*, \infty) \times [A,1]$, where $t_*$ is given by \eqref{def-t-*}. 

	It follows from Lemma \ref{Lem-Reg-u0} that $u$ is piecewise
        $\mathscr{C}^1$ and continuous in $[t_*, \infty) \times
        (A,1)$, non-negative there, and $u(t, A) = 0$ for all $t \geq
        t_*$. Consequently, for all $t \geq t_*$,  
	\[
		u(t, 1) = u(t_*, x_0), 
	\]
	where $x_0$ is  the unique solution of 
	\[
		x_0 + (t - t_*) f'( u(t_*,x_0)) = 1.
	\]
	In particular, one should have 
	\[
		\beta_- u(t_*,x_0)\le f'(u(t_*, x_0)) \le \frac{1}{t-t_*}.
	\]
Therefore, 
\begin{equation}
	\label{Decay-u-t-1}
	\forall t \ge t_*, \quad u(t,1) \le \frac{1}{\beta_- (t - t_*)}.
\end{equation}
From Lemma \ref{Lem-Reg-u0} and the fact that for all $t \geq 0$, $x \mapsto u(t,x)$ is non-decreasing on $[A,1]$ and non-negative, we get
\begin{equation*}
	\forall t \ge t_*, \quad \|u(t) \|_{L^\infty(A,1)} \le \frac{1}{\beta_- (t - t_*)}.
\end{equation*}

Now, the set $\mathscr{Z}$ on which $u = 0$ is delimited by the curve $\{ (t, A), \hbox{ for } t \geq t_*\}$ and the curve $\mathcal{C}$ given by \eqref{Eq-mathcalC}, that we now estimate: for $t_0 \geq T_*$, 
\[
 	t_0 + \frac{(u(t_0, 0))^\alpha}{\delta \alpha } \leq t_0 + \frac{1}{\beta_-^\alpha (t - t_*)^\alpha},  
\]
and
\begin{align*}
	\int_0^\infty f'((u(t_0,0)^\alpha - \delta \alpha \tau)_+^{1/\alpha}) \, d\tau 
	& 
	= \int_0^{u(t_0,0)^\alpha/(\delta \alpha)} f'((u(t_0,0)^\alpha - \delta \alpha \tau)_+^{1/\alpha}) \, d\tau 
	\\
	& \leq 
	\beta_+ \int_0^{u(t_0,0)^\alpha/(\delta \alpha)} (u(t_0,0)^\alpha - \delta \alpha \tau u(t_0,0)^{1- \alpha} )\, d\tau 
	\\
	& \leq \frac{\beta_+}{2 \delta \alpha} (u(t_0,0))^{1+ \alpha}  \leq \frac{\beta_+}{2 \delta \alpha \beta_-^{1+\alpha} } \frac{1}{(t-t_*)^{1+ \alpha}}.
\end{align*}
We then easily deduce that for $t \geq t_*$, there exists an open subinterval $\omega(t) \subset \omega$ such that $u(t)_{\mid \omega(t)} = 0$, and, for some constant $C$, for all time $t \geq t_*$,
	\begin{equation*}
		|\omega \setminus \omega(t) | \le \frac{C}{t^{1+\alpha}}. 
	\end{equation*} 

Besides, for $t \ge t_*$, we easily get from \eqref{u-T-*-R-2n+2} and
\eqref{Est-t-0-t-x} that  
\[
	\forall x \in [0,A], \quad 0 \leq u(t,x) \le \max_{t_0 \in [t - \varepsilon^\alpha/(\delta \alpha), t]} u(t,0) \leq \frac{1}{\beta_- (t-t_* - \varepsilon^\alpha/(\delta \alpha)) }.
\]
The item $(ii)$ of Lemma \ref{Lem-To-Finite} easily follows. 
\end{proof}
\subsection{Concave flux and positive constant initial datum}\label{Subsec-Concave-Flux}
Let $K>0$ and $f$ be a strictly concave flux, and consider the solution $u$ of 
\begin{equation}
 	\label{eq:flux-concave}
%	\left\{
%		\begin{array}{ll}
%			\displaystyle 
\d_t u + \d_x f(u) +a(x)\frac{u}{|u|^\alpha}=0,\quad (t,x)\in\R_+\times \T,
			\quad
			u_{\mid t=0}=K.
%		\end{array}
%	\right.
\end{equation}
Setting 
\[
	v(t,x) = u(t,-x), \quad (t,x) \in \R_+ \times \T, \quad \tilde a(x) = a(1-x), \quad x \in \T,
\]
one easily checks that $v$ formally solves 
\begin{equation}
	\label{eq:flux-convexe-v}
%	\left\{
%		\begin{array}{ll}
%				\displaystyle 
\d_t v + \d_x g(v) +\tilde a(x)\frac{v}{|v|^\alpha}=0,\quad (t,x)\in\R_+\times \T,
			\quad
			v_{\mid t=0}=K.
%		\end{array}
%	\right.
\end{equation}
with $g = -f$, which satisfies $g'(0) = 0$ and $\inf_{[-K,K]} g''
>0$. The fact that this transformation maps an admissible solution $u$
of \eqref{eq:flux-concave} to an admissible solution $v$ of
\eqref{eq:flux-convexe-v} is easy to check.  

Therefore, the counterpart of Lemma \ref{Lem-To-Finite} item $(ii)$ also holds for solutions of \eqref{eq:sol-u-K} when $f$ only satisfies \eqref{Cond-Flux}:

\begin{lemma}
	\label{Lem-Finite-Flux-General}
	Let $f$ and $K$ satisfy \eqref{Cond-Flux},  $\delta >0$ and $\omega= (0,A) \subset \T$. Then the solution $u$ of \eqref{eq:sol-u-K} satisfies the following property: There exists $t_*\geq 0$ such that for all $t \ge t_*$, there exists an open subinterval $\omega(t) \subset \omega$ such that $u(t)_{\mid \omega(t)} = 0$, and 
	\begin{equation*}
		|\omega \setminus \omega(t) | \le \frac{C}{t^{1+\alpha}}, 
		\quad 
		\| u(t) \|_{L^\infty(\T)} \le \frac{C}{t}.
	\end{equation*} 
\end{lemma}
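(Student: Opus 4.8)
The plan is to reduce Lemma~\ref{Lem-Finite-Flux-General} to the strictly convex case, Lemma~\ref{Lem-To-Finite} item $(ii)$, which has already been fully established, together with the concave case treated in Subsection~\ref{Subsec-Concave-Flux}. The key observation is that the estimate \eqref{Cond-Flux}, $f'(0)=0$ and $\inf_{[-K,K]}|f''|>0$, forces $f''$ to have a constant sign on $[-K,K]$ (since $f''$ is continuous and never vanishes there), so $f$ is, on the relevant range $[-K,K]$, either strictly convex or strictly concave. Since the solution $u$ of \eqref{eq:sol-u-K} with $u_0=K\ge 0$ stays in $[0,K]$ for all times by Proposition~\ref{prop:comparaison}, only the values of $f$ on $[0,K]\subset[-K,K]$ matter, and one may freely modify $f$ outside $[-K,K]$ to make it globally strictly convex (resp.\ strictly concave) without changing the solution.

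First I would record this sign dichotomy and, in the convex case, simply invoke Lemma~\ref{Lem-To-Finite} item $(ii)$ directly: it gives exactly the desired conclusion with $t_*$ as in \eqref{def-t-*} and the two bounds $|\omega\setminus\omega(t)|\le C/t^{1+\alpha}$ and $\|u(t)\|_{L^\infty(\T)}\le C/t$. In the concave case, I would use the change of variables already set up in Subsection~\ref{Subsec-Concave-Flux}: set $v(t,x)=u(t,-x)$ and $\tilde a(x)=a(1-x)$, so that $v$ solves \eqref{eq:flux-convexe-v} with $g=-f$ satisfying $g'(0)=0$ and $\inf_{[-K,K]}g''>0$, and $\tilde a(x)=\delta\mathbf 1_{(1-A,1)}$, which is of the form \eqref{eq:damping-profile} after translating $\T$. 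Applying Lemma~\ref{Lem-To-Finite} item $(ii)$ to $v$ and pulling back through the (measure-preserving, interval-preserving) map $x\mapsto -x$ yields the same conclusion for $u$: the set where $v(t,\cdot)$ vanishes is an open subinterval of $(1-A,1)$ with complement of measure $\le C/t^{1+\alpha}$, hence the set where $u(t,\cdot)$ vanishes is an open subinterval $\omega(t)$ of $\omega=(0,A)$ with $|\omega\setminus\omega(t)|\le C/t^{1+\alpha}$, and $\|u(t)\|_{L^\infty(\T)}=\|v(t)\|_{L^\infty(\T)}\le C/t$.

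I expect the only mild subtlety — and hence the ``main obstacle,'' though it is quite minor — to be the bookkeeping that the damping in \eqref{eq:sol-u-K} is of the special form \eqref{eq:damping-profile} (indicator of a single interval $(0,A)$), so that after the concave-to-convex reflection one must check that $\tilde a$ is again the indicator of a single subinterval (up to a rotation of $\T$), which it is since reflection sends $(0,A)$ to $(-A,0)\equiv(1-A,1)$. One should also note explicitly that the constant $C$ produced depends only on $f$ (through $\beta_\pm$ built from $f'(s)/s$ on $[0,K]$), $K$, $\delta$ and $A$, and not on anything else, so the statement is uniform in the sense required; and that $t_*\ge 0$ always exists, being finite by Lemma~\ref{Lem-To-Finite} item $(i)$ (finiteness of $\To$, hence of $t_*=\To+\varepsilon^\alpha/(\delta\alpha)$) in the convex case and by the symmetric argument in the concave case. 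No new analytic estimates are needed; the proof is purely a reduction.

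\begin{proof}
	By \eqref{Cond-Flux}, $f''$ is continuous and does not vanish on $[-K,K]$, hence has a constant sign there; thus $f$ restricted to $[-K,K]$ is either strictly convex or strictly concave. Moreover, since $u_0=K\ge 0$, Proposition~\ref{prop:comparaison} guarantees $0\le u(t,x)\le K$ for all $t\ge 0$ and a.e.\ $x\in\T$, so the solution $u$ of \eqref{eq:sol-u-K} only depends on the values of $f$ on $[0,K]$; we may therefore replace $f$ outside $[-K,K]$ by any smooth extension making $f$ globally strictly convex (resp.\ strictly concave), without affecting $u$.

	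If $f$ is (after this modification) strictly convex, then \eqref{Cond-Flux-convexe} holds, and Lemma~\ref{Lem-To-Finite} applies directly: item $(i)$ gives that $\To$ in \eqref{Def-To} is finite, hence $t_*=\To+\varepsilon^\alpha/(\delta\alpha)$ in \eqref{def-t-*} is finite, and item $(ii)$ gives, for all $t\ge t_*$, an open subinterval $\omega(t)\subset\omega$ with $u(t)_{\mid\omega(t)}=0$ and
	\[
		|\omega\setminus\omega(t)|\le \frac{C}{t^{1+\alpha}},\qquad \|u(t)\|_{L^\infty(\T)}\le\frac{C}{t},
	\]
	for some constant $C$ depending only on $f$, $K$, $\delta$, $A$.

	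If $f$ is strictly concave, we follow Subsection~\ref{Subsec-Concave-Flux}: setting $v(t,x)=u(t,-x)$ and $\tilde a(x)=a(1-x)$, the function $v$ is the admissible solution of \eqref{eq:flux-convexe-v} with $g=-f$, which satisfies $g'(0)=0$ and $\inf_{[-K,K]}g''>0$, and with $\tilde a=\delta\mathbf 1_{(1-A,1)}$, i.e.\ $\tilde a=\delta\mathbf 1_{\tilde\omega}$ for the interval $\tilde\omega=(1-A,1)\subset\T$. Applying the already established convex case to $v$ yields, for all $t\ge t_*$, an open subinterval $\tilde\omega(t)\subset\tilde\omega$ with $v(t)_{\mid\tilde\omega(t)}=0$ and $|\tilde\omega\setminus\tilde\omega(t)|\le C/t^{1+\alpha}$, $\|v(t)\|_{L^\infty(\T)}\le C/t$. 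Since $x\mapsto-x$ is measure preserving and maps intervals to intervals, defining $\omega(t)=\{-x:x\in\tilde\omega(t)\}\subset\omega$ gives $u(t)_{\mid\omega(t)}=0$, $|\omega\setminus\omega(t)|=|\tilde\omega\setminus\tilde\omega(t)|\le C/t^{1+\alpha}$, and $\|u(t)\|_{L^\infty(\T)}=\|v(t)\|_{L^\infty(\T)}\le C/t$. This proves the lemma in all cases.
\end{proof}
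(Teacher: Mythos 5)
Your proof is correct and follows essentially the same route as the paper: the sign dichotomy from \eqref{Cond-Flux} reduces the statement to the strictly convex case of Lemma~\ref{Lem-To-Finite} item $(ii)$, and the concave case is handled by exactly the reflection $v(t,x)=u(t,-x)$, $\tilde a(x)=a(1-x)$ used in Subsection~\ref{Subsec-Concave-Flux}. The only addition (extending $f$ outside $[-K,K]$ to be globally convex) is harmless but unnecessary, since \eqref{Cond-Flux-convexe} and the convex-case analysis only use the behavior of $f$ on $[0,K]$, where the solution lives.
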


\subsection{Negative constant initial datum}\label{Subsec-Negative-Cstt}

Let $K >0$ and consider the solution $u$ of 
\begin{equation}
 	\label{eq:flux--K}
	\left\{
		\begin{array}{ll}
			\displaystyle \d_t u + \d_x f(u) +a(x)\frac{u}{|u|^\alpha}=0,\quad (t,x)\in\R_+\times \T,
			\\
			u_{\mid t=0}=-K.
		\end{array}
	\right.
\end{equation}
Then, setting 
$$
	w(t,x) = - u(t,x), \quad (t,x) \in \R_+ \times \T, 
$$
$w$ formally solves 
\begin{equation}
	\label{eq:flux-+K}
	\left\{
		\begin{array}{ll}
			\displaystyle \d_t w + \d_x h(w) + a(x)\frac{w}{|w|^\alpha}=0,\quad (t,x)\in\R_+\times \T,
			\\
			w_{\mid t=0}=K, 
		\end{array}
	\right.
\end{equation}
where the flux $h$ is given by
\begin{equation*}
	\forall s \in [-K,K], \quad h(s) = - f(-s).
\end{equation*}
It is easy to check that $h'(0) = 0$ and $\inf_{[-K,K]} |h''(s)| >0$. Besides, this transformation also establishes the correspondence between the admissible solution $u$ of \eqref{eq:flux--K} and the admissible solution $w$ of \eqref{eq:flux-+K}. 

Therefore, Lemma \ref{Lem-To-Finite} item $(ii)$ also holds when the
initial datum is constant $= - K$, under the only condition that the
flux $f$ satisfies \eqref{Cond-Flux}. 

\begin{lemma}
	\label{Lem-omega-General}
	Let $f$ and $K$ satisfy \eqref{Cond-Flux},  $\delta >0$ and $\omega= (0,A) \subset \T$. Then the solutions $u_\pm $ of 
	\begin{equation}
  \label{eq:sol-u-K-pm}
  \d_t u_\pm+\d_x f(u_\pm)+\delta 1_{(0,A)} \frac{u_\pm}{|u_\pm|^\alpha} =0 \text{ in } \R_+ \times \T, \quad u_\pm \mid_{ t=0}= \pm K \text{ in } \, \T.
	\end{equation}
 satisfy the following property: There exists $t_* \geq 0$ such that for all $t \ge t_*$, there exists an open subinterval $\omega_\pm(t) \subset \omega$ such that $u_\pm(t)_{\mid \omega_\pm(t)} = 0$, and 
	\begin{equation*}
		|\omega \setminus \omega_\pm (t) | \le \frac{C}{t^{1+\alpha}}, 
		\quad 
		\| u_\pm(t) \|_{L^\infty(\T)} \le \frac{C}{t}.
	\end{equation*} 
\end{lemma}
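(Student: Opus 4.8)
The plan is to deduce Lemma~\ref{Lem-omega-General} directly from Lemma~\ref{Lem-Finite-Flux-General}, combined with the symmetry $u\mapsto -u$ already exploited in Subsection~\ref{Subsec-Negative-Cstt}; essentially no new analysis is needed, only a bookkeeping of constants.

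First I would handle $u_+$. Since $u_+$ is exactly the solution of \eqref{eq:sol-u-K} with constant initial datum $+K$ and $f$ satisfies \eqref{Cond-Flux}, Lemma~\ref{Lem-Finite-Flux-General} applies verbatim: it provides a time $t_*^+\ge 0$, a constant $C^+>0$, and for every $t\ge t_*^+$ an open subinterval $\omega_+(t)\subset\omega$ on which $u_+(t)$ vanishes, with $|\omega\setminus\omega_+(t)|\le C^+ t^{-(1+\alpha)}$ and $\|u_+(t)\|_{L^\infty(\T)}\le C^+ t^{-1}$.

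Next I would handle $u_-$. Following Subsection~\ref{Subsec-Negative-Cstt}, set $w(t,x)=-u_-(t,x)$. Because the damping term $u/|u|^\alpha$ (resp. $\Sign(u)$ when $\alpha=1$) is odd in $u$ while the localized profile $a=\delta\mathbf{1}_{(0,A)}$ is untouched by this reflectionless transformation, $w$ is an admissible solution of the same equation on the same $\omega=(0,A)$ with flux $h(s)=-f(-s)$ and constant initial datum $+K$. Moreover $h$ still satisfies \eqref{Cond-Flux}, since $h'(0)=f'(0)=0$ and $\inf_{[-K,K]}|h''(s)|=\inf_{[-K,K]}|f''(-s)|>0$. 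Applying Lemma~\ref{Lem-Finite-Flux-General} to $w$ yields a time $t_*^-\ge 0$, a constant $C^->0$, and for each $t\ge t_*^-$ an open subinterval $\omega_-(t)\subset\omega$ with $w(t)_{\mid\omega_-(t)}=0$, $|\omega\setminus\omega_-(t)|\le C^- t^{-(1+\alpha)}$, and $\|w(t)\|_{L^\infty(\T)}\le C^- t^{-1}$. Since $|u_-|=|w|$ pointwise and $u_-(t)_{\mid\omega_-(t)}=-w(t)_{\mid\omega_-(t)}=0$, the same three bounds transfer to $u_-$ with the interval $\omega_-(t)$.

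Finally I would conclude by taking $t_*=\max\{t_*^+,t_*^-\}$ and $C=\max\{C^+,C^-\}$, which then works simultaneously for $u_+$ and $u_-$, yielding the statement. I do not expect any genuine obstacle: the only points needing a line of justification — that $w=-u_-$ is again an admissible solution and that the localized damping set $\omega$ is preserved under $u\mapsto -u$ — are precisely the ones already recorded in Subsection~\ref{Subsec-Negative-Cstt}.
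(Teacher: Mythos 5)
Your proposal is correct and is essentially the paper's own argument: Lemma \ref{Lem-Finite-Flux-General} applied to $u_+$ directly, and to $w=-u_-$ after the sign change of Subsection \ref{Subsec-Negative-Cstt}, whose flux $h(s)=-f(-s)$ still satisfies \eqref{Cond-Flux}, with the final $t_*$ and $C$ taken as the maxima of the two cases. No gap to report.
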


\subsection{Proof of Theorem \ref{thm:Burgers-Main}}
The proof of Theorem \ref{thm:Burgers-Main} follows by comparing the solution $u$ of \eqref{eq:flux-general}--\eqref{eq:ci} with initial datum $u_0 \in L^\infty(\T)$ with some reference solutions. 

Namely, we assume that $a$ satisfies \eqref{eq:local} for some open interval $\omega \subset \T$. Up to a translation in space, we can assume that $\omega = (0,A)$. Therefore, the solution $u$ of \eqref{eq:flux-general}--\eqref{eq:ci} with initial datum $u_0 \in L^\infty(\T)$ with $\|u_0\|_{L^\infty} \leq K$, $K$ as in \eqref{eq:def-K}, is sandwiched between the solutions $u_\pm$ of \eqref{eq:sol-u-K-pm}.

We then immediately conclude Theorem \ref{thm:Burgers-Main} from Lemma \ref{Lem-omega-General}.

%%% Local Variables:
%%% mode: latex
%%% TeX-master: "damping"
%%% End:

\section{Numerical simulations and open problems}
\label{sec:num}
We present in this section some numerical experiments for various equations to which our theoretical results do not apply. The main numerical technique relies on the time-splitting scheme as time integrator. If one considers a general evolution equation
\begin{equation}
\label{eq:split}
\left \{
\begin{array}{ll}
\partial_t u = \A u + \B u, & (t,x)\in \R_+\times \T,\\
u(0,x)=u_0(x), & x\in \T,
\end{array}
\right .
\end{equation}
where $\A$ and $\B$ are (possibly non-linear) operators which need not
 commute. For a given time step $\delta t>0$, set $t_n=n\delta
t$, $n=0,1,2,\dots$ Define the operators $S_\A$ and $S_\B$
associated respectively to the evolution equations 
\[
\partial_t u_\A = \A u_\A, \quad \partial_t u_\B = \B u_\B, \quad
(t,x)\in \R_+\times \T, 
\]
The operators satisfy the following relations involving the exact solutions of the associated equations:
\[
u_\A(t+\delta t)=S_\A(\delta t)u_\A(t) \quad \text{and} \quad u_\B(t+\delta t)=S_\B(\delta t)u_\B(t).
\]
The splitting idea (see for example \cite{HLW}) consists in
approximating the continuous flow associated to \eqref{eq:split} by a
composition of operators $S_\A$ and $S_\B$ in the spirit of
Trotter-Kato formula, the key for an efficient implementation being to
solve efficiently these two reduced equations. We consider in this
paper the second order Strang splitting scheme. 
Let $u^n(x)$ be the approximation of $u(t_n,x)$. The approximate solution to \eqref{eq:split} at time $t_{n+1}$ reads
\begin{equation}
  \label{eq:strang}
	u^{n+1}=S_\A(\delta t/2)S_\B(\delta t)S_\A(\delta t/2) u^n.  
\end{equation}
\subsection{Scalar conservation laws}
Let us now describe how it is applied to the equation \eqref{eq:flux-general}. It involves the two reduced equation
\begin{equation}
  \label{eq:hyperb}
  \partial_t u+\partial_xf(u)=0 , \quad (t,x) \in \R_+ \times \T,
\end{equation}
and
\begin{equation}
  \label{eq:ode}
  \partial_t u = -a(x) \frac{u}{|u|^\alpha}, \quad (t,x) \in \R_+ \times \T.
\end{equation}
The equation \eqref{eq:hyperb} is a standard nonlinear conservative hyperbolic equation. In the second equation \eqref{eq:ode}, the space variable can be considered as a parameter and the equation reduced to an ordinary equation with solution
\begin{equation}
  \label{eq:sol_ode}
  u(t,x)=\text{sign}(u_0(x)) \left(|u_0(x)|^\alpha-\alpha a(x) t\right)^{1/\alpha}_+.
\end{equation}
If the flux $f$ is linear, $f(u)=c u$, the solution to \eqref{eq:hyperb} is obviously
\[
u(t,x)=u_0(x-ct).
\]
For a general flux, we compute an approximate solution thanks to
Rusanov scheme (see for example \cite[p.233]{Lev2}). We identify the torus with $(0,1)$ endowed with
periodic boundary conditions and choose the spatial mesh size $\delta
x>0$ with $\delta x=1/J$ with $J$ denoting the number of nodes. The
grid points are $x_j=j\delta x$, $j=0,1,\cdots,J$. Let $u_j^n$ be the
full approximation to $u(t_n,x_j)$. The Rusanov scheme reads 
\[
u_j^{n+1}=u_j^{n}-\frac{\delta t}{\delta x}\left (F_{j+1/2}^n-F_{j-1/2}^n\right ),
\]
where the Rusanov numerical flux is given by
\[
  \begin{array}{ll}
    F_{j+1/2}^n&=F^{\text{Rus}} (u_j^n,u_{j+1}^n)\\
    & \displaystyle=\frac{f(u_j^n)+f(u_{j+1}^n)}{2}-\frac{\text{max}(|f'(u_j^n)|,|f'(u_{j+1}^n)|)}{2} (u_{j+1}^n-u_j^n).
  \end{array}
\]
% For a general flux, we compute an approximate solution thanks to Lax-Friedrichs scheme. We identify the torus with $(0,1)$ endowed with periodic boundary conditions and choose the spatial mesh size $\delta x>0$ with $\delta x=1/J$ with $J$ denoting the number of nodes. The grid points are $x_j=j\delta x$, $j=0,1,\cdots,J$. Let $u_j^n$ bet the full approximation to $u(t_n,x_j)$. The Lax Friedrichs scheme reads
% $$
% u_j^{n+1}=\frac{u_{j+1}^n+u_j^n}{2} -\frac{\delta t}{2\delta x} \left (f(u_{j+1}^n)-f(u_{j-1}^n) \right ).
% $$
The theoretical results of previous sections apply to fluxes with assumption $f'(0)\neq 0$ or $f'(0)=0$ with convexity hypothesis (convex or concave flux). Some fluxes do not satisfy such hypothesis. This is the case of the Buckley-Leverett flux which models two phase fluid flow in a porous medium (\cite{Lev}). In one space dimension the equation has the standard conservation law form ($k>0$ is a parameter)
\begin{equation}
  \label{eq:buck_lev_flux}
  f_k^{BL}(u)=\frac{u^2}{u^2+k(1-u)^2}.
\end{equation}
We compute the evolution of the solution to \eqref{eq:flux-general}
with $f^{BL}_{1/4}$ flux and the damping function $a(x)$ given by
\eqref{eq:damping-profile}. The numerical parameters are
$u_0(x)=K=1.25$, $A=1/4$, $\delta = 1$, $\delta t=10^{-5}$ and $\delta
x = 5\cdot 10^{-5}$. The evolution of the solutions for $\alpha=3/4$
and $\alpha=1$ are plotted on Figure \ref{fig:evo_sol_buck} and the
evolution of their characteristic curves on Figure
\ref{fig:evo_buck}. The characteristic curves are computed as the
evolution of a vector field with velocity given by the solution to
\eqref{eq:flux-general}. We see that contrary to convex (or concave)
fluxes with $f'(0)=0$, shock and rarefaction waves appear in finite
time. Since the domain is a torus, the shock wave initiated from
$x=A=1/4$ propagates until the influence of the damping function $a$
is enough important to annihilate the solution inside the support of
$a$. We then recover a similar process as the one observed on Figure~\ref{fig:evo_burgers} where characteristic curves become vertical
lines in finite time inside the support of the damping function
$a$. The effect of decreasing $\alpha$ is to delay the extinction of the solution
in $(0,A)$. The proof of this phenomenon is still missing. 
\begin{figure}[h]
  \centering
\begin{tabular}{cc}
\input{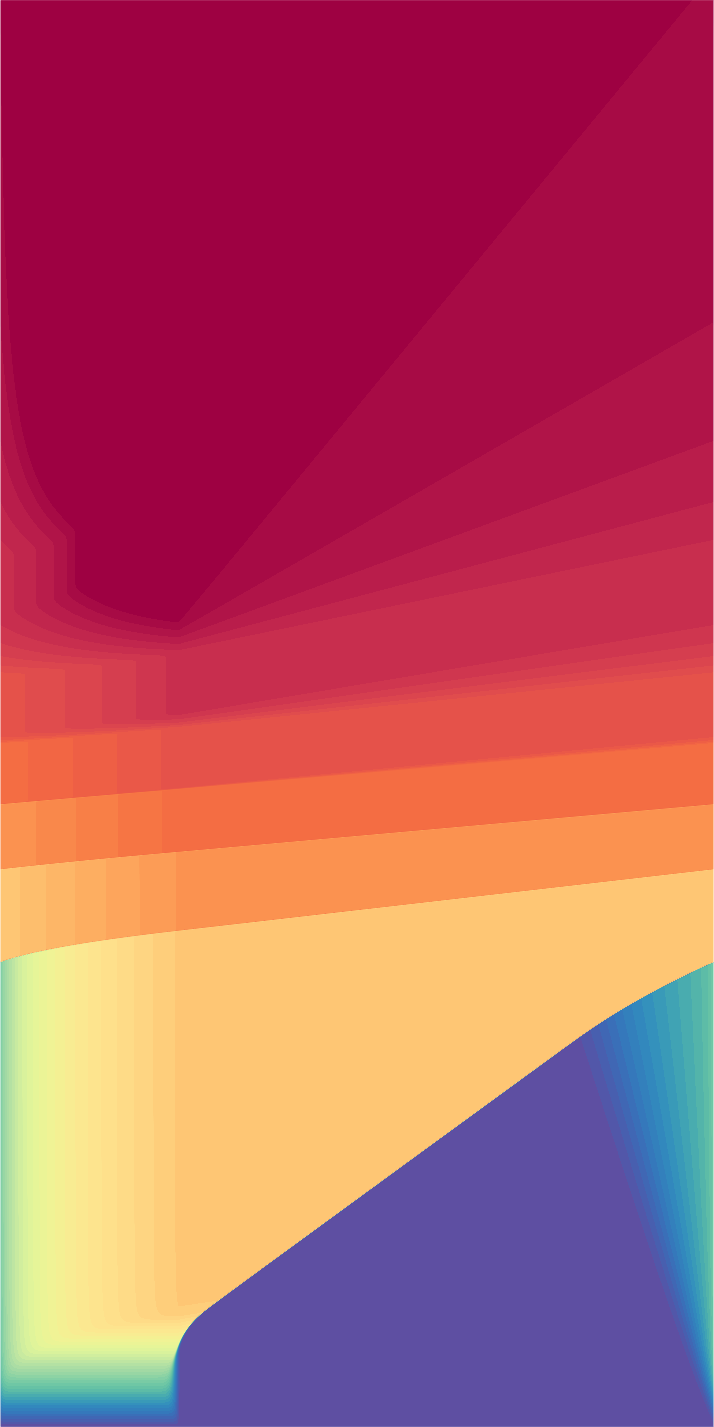}
  &
\input{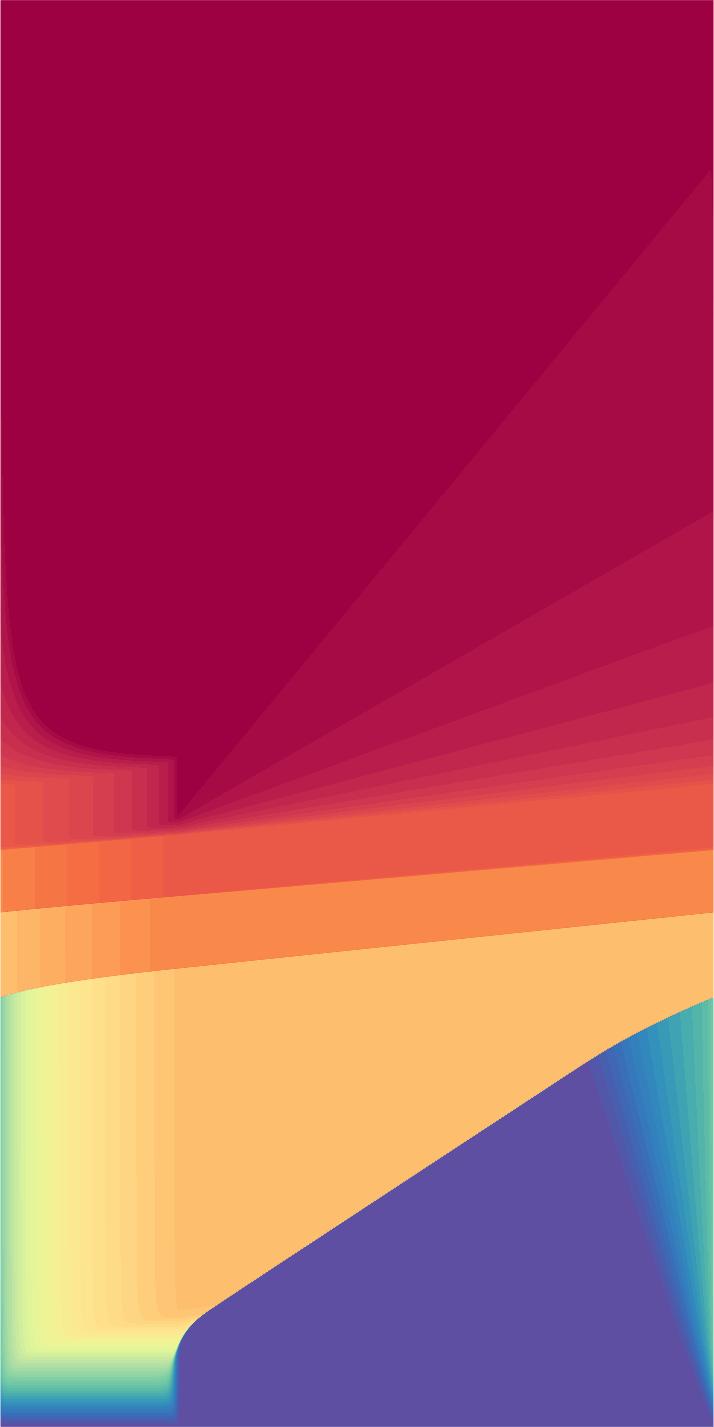}\\
\end{tabular}   
  \caption{Evolution of the solution for Buckley-Leverett flux $f^{BL}_{1/4}$ with $\alpha=0.75$ (left) and $\alpha=1$ (right)}
  \label{fig:evo_sol_buck}
\end{figure}

\begin{figure}[h]
  \centering
\begin{tabular}{cc}
\input{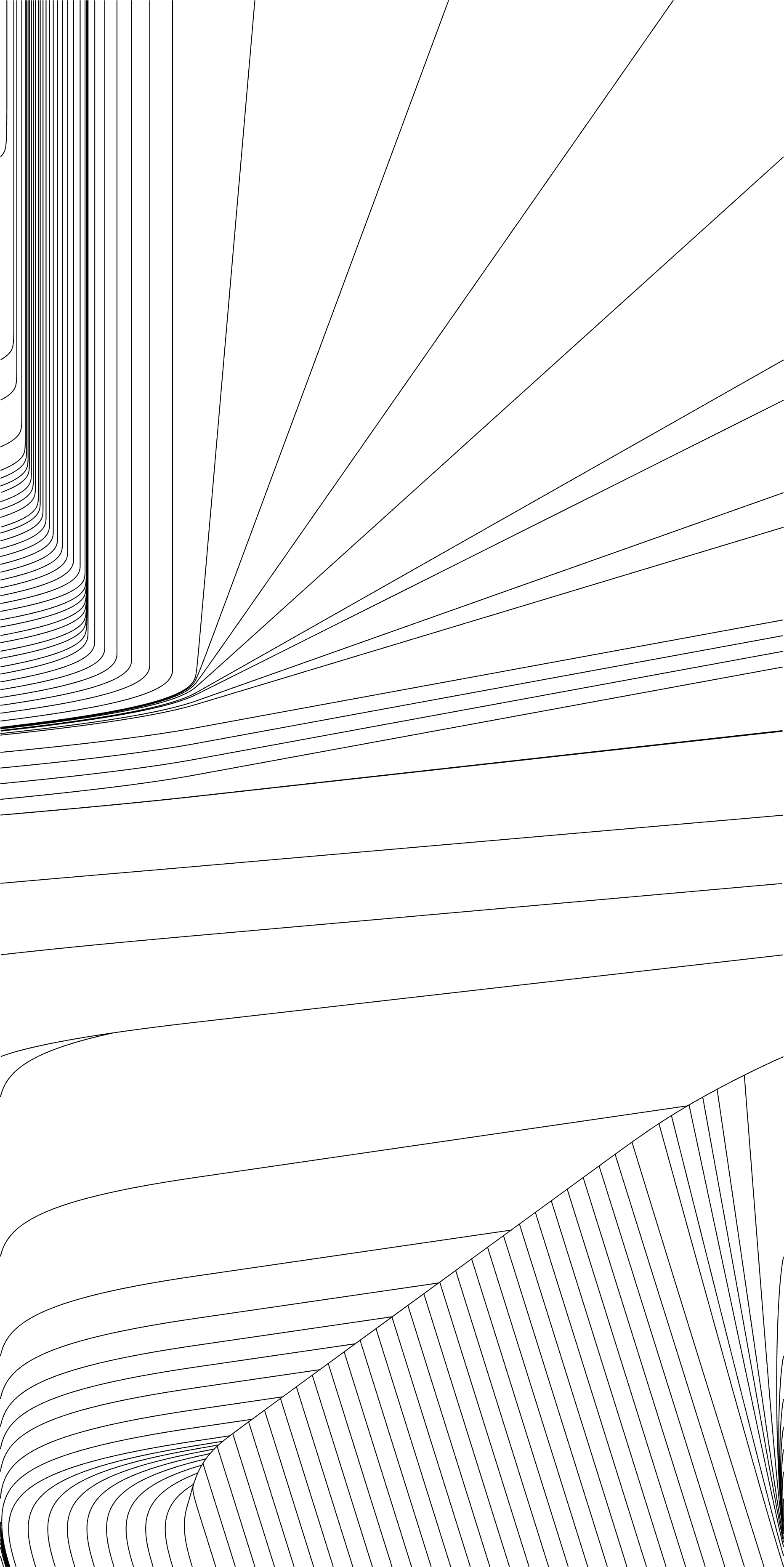}
  &
\input{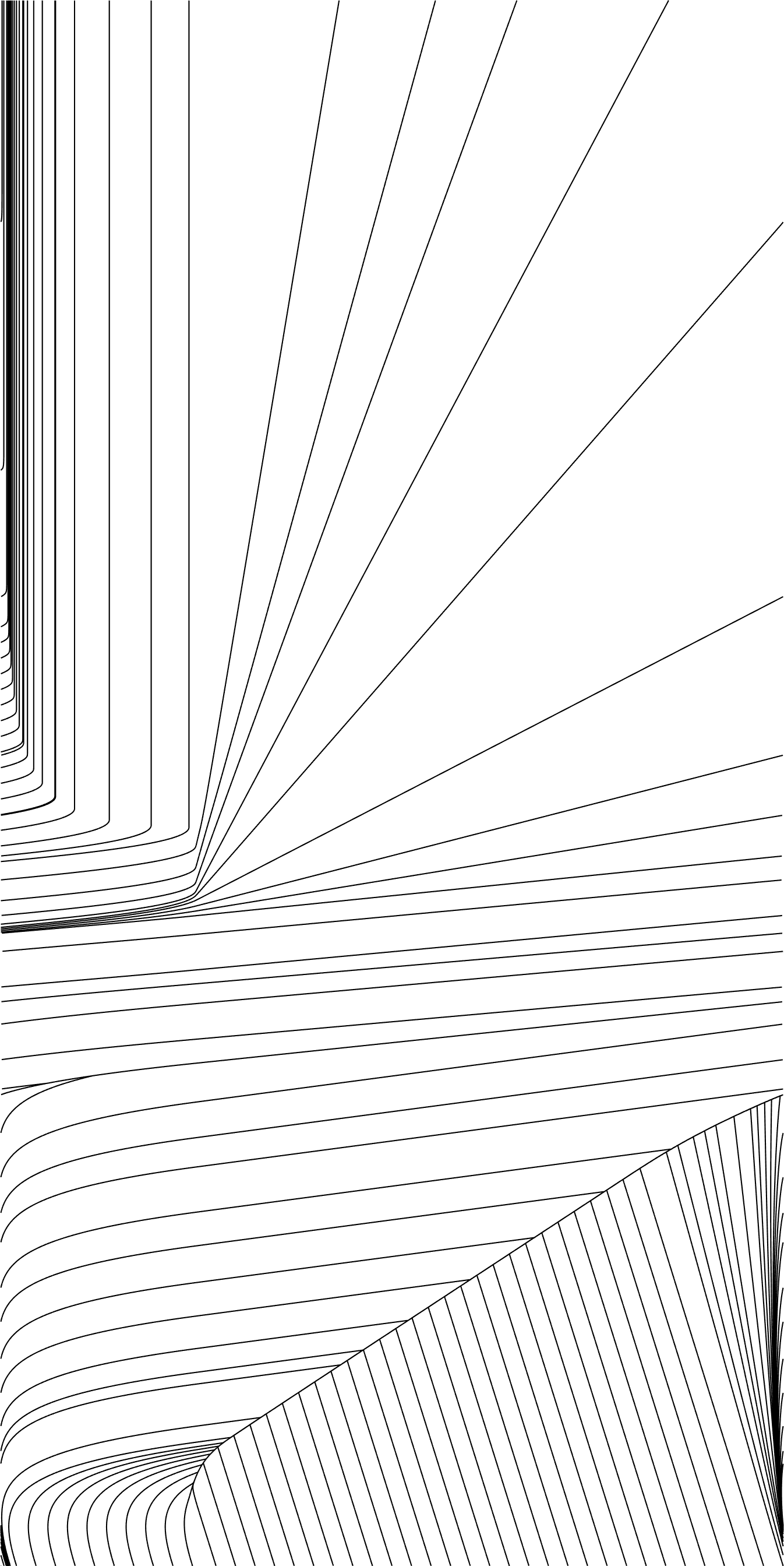}\\
\end{tabular}   
  \caption{Evolution of the characteristic curves for Buckley-Leverett flux $f^{BL}_{1/4}$ with $\alpha=0.75$ (left) and $\alpha=1$ (right)}
  \label{fig:evo_buck}
\end{figure}

\subsection{Viscous Burgers equations}
We consider here the convection diffusion equation given for $\mu >0$ by 
\begin{equation}
  \label{eq:burg_heat}
  \partial_t u + \partial_x \left (\frac{u^2}{2}\right ) = \mu \partial_x^2 u - a(x) \frac{u}{|u|^\alpha}, \quad (t,x) \in \R_+\times \T.
\end{equation}
We look at the solutions when $a(x)=\delta \1_{\omega}$, $\omega = (0,
A)$, $A<1$, the torus $\T$ being the circle $(0,1)$. This equation
involves three different operators. We have to apply a second order
three-operators splitting scheme which reads for the evolution
equation $\partial_t u=(\A+\B+{\CC})u$:

\begin{equation}
\label{eq:strang3}
u^{n+1}=S_\A(\delta t/2)S_\B(\delta t/2)S_{\CC}(\delta t)S_\B(\delta t/2)S_\A(\delta t/2) u^n,
\end{equation}
where $S_\A$, $S_\B$ and $S_{\CC}$ denote the flows associated to
operators $\A$, $\B$ and $\CC$. Since we study the equation
\eqref{eq:burg_heat} on a torus, we benefit from the periodicity to
use fast Fourier transform in order to make space approximation of the
solutions of the  heat equation 
\[
\partial_t w=\mu \partial_x^2 w.
\]
The spatial mesh size is defined by $\delta x=1/J$, $J=2^{P}$, $P\in \N^*$. Since we discretize the heat equation by the Fourier spectral method, $w_j^n$ and
its Fourier transform satisfy the following relations:
\[
w_j^n = \frac{1}{J} \sum_{m=-J/2}^{J/2-1} \hat{w}_m^n e^{i\xi_m(x_j-x_\ell)}, \quad j=0,\cdots,J-1,
\]
and
\[
\hat{w}_m^n=\sum_{j=0}^{J-1} w_j^n e^{-i\xi_m(x_j-x_\ell)}, \quad m=-\frac{J}{2},\cdots,\frac{J}{2}-1,
\]
where $\xi_m=2\pi m$ for all $m=-\frac{J}{2},\cdots,\frac{J}{2}-1$. The discrete Laplace operator $\Delta_P$ is therefore define by
\[
\widehat{(\Delta_P v)}_m = - \xi_m^2  \hat{v}_m, \quad v\in \C^M.
\]

We present on Figure \ref{fig:evo_sol_burg_heat} the evolution of the
logarithm of the solution. We choose the same numerical parameters
used for Buckley-Leverett equation, the only difference relying on the
mesh size which is $\delta x = 2^{-14}$.  We present the logarithm to
show that like in the hyperbolic case, the solution becomes zero on
the support of the damping function $a$ after a time $T^*$ which
depends on the parameter $\alpha$. What is more surprising is the fact
that after the time $T^*$, the solution on $(A,1)$ behaves like the
solution of the heat equation with homogeneous Dirichlet boundary
conditions associated to the first eigenvalues of the Laplacian. We
know that this solution on $(A,1)$ is  
\[
v(t,x)=\exp(-\mu \lambda^2 t) \sin(\lambda (x-A)),
\]
with $\lambda=\pi/(1-A)$. We clearly identify this phenomenon by
displaying the evolution of the $L^\infty$ norm of the solution with
respect to time on Figure \ref{fig:evo_linf_burg_heat}. We plot both
the $L^\infty$ norm and a dashed line in log-scale with slope $-\mu
\lambda^2$.  The sin-like behavior of the solution on $[A,1]$ for time
$t=10$ is also clearly present on Figure
\ref{fig:evo_linf_burg_heat}. A rigorous mathematical proof of the
above observations is, to our knowledge, missing, despite the works
\cite{Be10,BeSh10}, where conditions for complete extinctions of the
solutions are discussed (see also \cite{Be01,BeHeVe01,BeSh07} for related results).
\setlength{\malargeur}{4.3cm}
\setlength{\figureheight}{0.85\malargeur}
\setlength{\figurewidth}{1.385\malargeur} 
\begin{figure}[h]
  \centering
%\begin{tabular}{cc}
\input{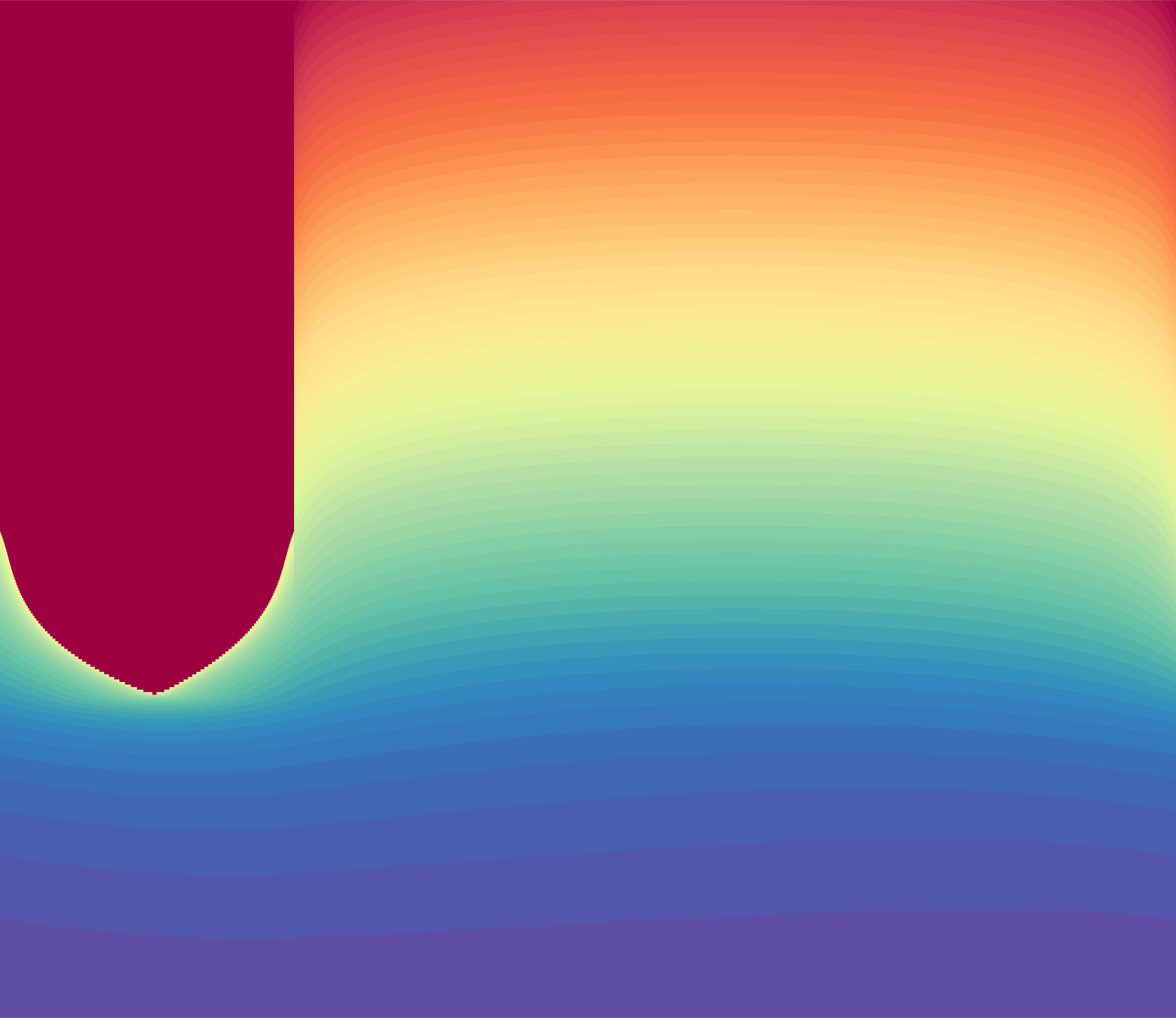}
%  &
%\hspace*{-5mm}\input{evol_sol_log10_visq_burgers_alpha_1_00}\\
%\end{tabular}   
%  \caption{Evolution of the $\log_{10}$ of the solution to \eqref{eq:burg_heat} with $\alpha=0.75$ (left) and $\alpha=1$ (right)}
 \caption{Evolution of the $\log_{10}$ of the solution to \eqref{eq:burg_heat} with $\alpha=0.75$}
  \label{fig:evo_sol_burg_heat}
\end{figure}
\setlength{\malargeur}{4.3cm}
\setlength{\figureheight}{1\malargeur}
\setlength{\figurewidth}{1.63\malargeur} 
\begin{figure}[h]
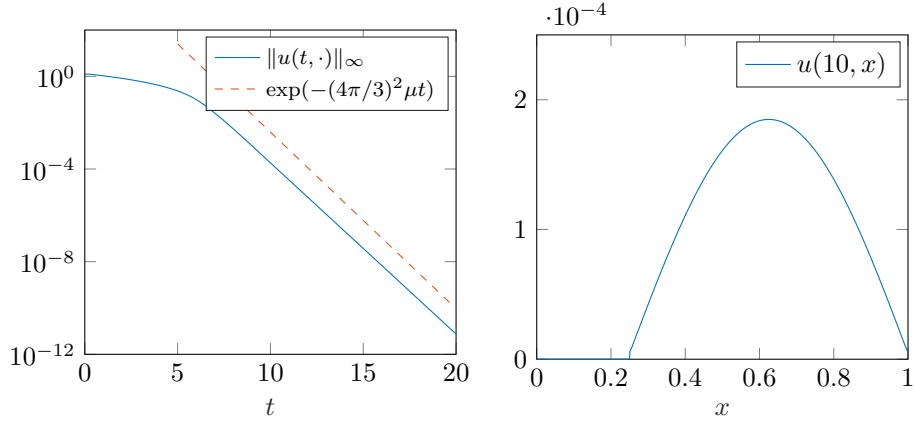

  \centering
\begin{tabular}{cc}
\input{evol_Linfnorm_visq_burgers_alpha_0_75}
  &
\input{evol_sol_t_10_burg_heat}\\
\end{tabular}   
  \caption{Evolution of the $L^\infty$ norm of the solution to \eqref{eq:burg_heat} in log-scale (left) and solution at time $t=10$ (right) for $\alpha=0.75$}
  \label{fig:evo_linf_burg_heat}
\end{figure}

We end up this paragraph by emphasizing that \eqref{eq:burg_heat} is a
viscous approximation of the Burgers equation, which is a conservation
law fitting the assumptions of Theorem \ref{thm:Burgers-Main}. It is
thus completely natural to ask the behavior of \eqref{eq:burg_heat} in
large times, similarly to what has been done in Theorem
\ref{thm:Burgers-Main}. Though, as our numerical simulations
underline, the large-time behavior of \eqref{eq:burg_heat} is very
different from the one of the Burgers equations predicted by Theorem
\ref{thm:Burgers-Main}. This is an evidence of the fact that the limit
of large times and the limit of small viscosities do not commute, as
observed in other contexts for instance in \cite{Ignat-Pozo-Zuazua}.

\subsection{Wave equation}
We consider now the wave equation with homogeneous Dirichlet boundary conditions
\begin{equation}
  \label{eq:wave}
\left\{
  \begin{array}{l}
\displaystyle  \partial_t^2u-c^2\partial_x^2u=-a(x) \frac{\partial_t u}{|\partial_t u|^\alpha}, \quad  (t,x) \in \R_+\times (0,1),\\
u(t,0)=u(t,1)=0,
  \end{array}
\right.
\end{equation}
completed with initial conditions $u(0,x)=u_0(x)$ and $\partial_t u(0,x)=u_1(x)$.

To numerically simulate the solution to \eqref{eq:wave}, we begin by transforming the equation as the first order hyperbolic system 
\[
\partial_t
\begin{pmatrix}
  u\\v
\end{pmatrix}
=
\begin{pmatrix}
  0 & 1 \\ c^2 \partial_x^2 & 0
\end{pmatrix}
\begin{pmatrix}
  u\\v
\end{pmatrix}
+
\begin{pmatrix}
  0 \\ - a(x) v/|v|^\alpha
\end{pmatrix}.
\]
We can therefore apply the Strang splitting method \eqref{eq:strang}. The solution to the ODE $\partial_t v = - a(x) v/|v|^\alpha$ is given by \eqref{eq:sol_ode} and we approximate the free wave equation with the Newmark scheme (\cite{Quarteroni})
\begin{equation}
  \label{eq:Newmark}
  \begin{array}{l}
\displaystyle    u_j^{n+1}=u_j^n + \delta t v_j^n + \delta t^2 \left [ \zeta c^2 w_j^{n+1} + (1/2-\zeta) c^2 w_j^n\right],\\[3mm]
\displaystyle v_j^{n+1}=v_j^n +\delta t \left [ (1-\theta) c^2 w_j^n +\theta c^2 w_j^{n+1} \right ],
  \end{array}
\end{equation}
with $u_j^0=u_0(x_j)$, $v_j^0=u_1(x_j)$ and $w_j^k=(u_{j+1}^k-2u_j^k +u_{j-1}^k)/(\delta x)^2$. We select for our numerical simulations $\theta=1/2$ and $\zeta=1/4$ for which the scheme is both second order in space and time and unconditionally stable.

We select the initial conditions
\[
u_0(x) = K \left \{
\begin{array}{ll}
1-e^1\exp(-0.1/(0.1-x)), & \text{if } x<0.1,\\
1, & \text{if } 0.1\leq x \leq 0.9,\\
1-e^1\exp(-0.1/(x-0.9)), &\text{if } x>0.9,
\end{array}
\right .
\]
and $u_1(x)=0$. The damping function is $a(x)=\delta \1_{\omega}$, $\omega = (3/8, 5/8)$. The numerical parameters are $\alpha=1$, $c=0.1$, $\delta t=5\cdot 10^{-4}$, $\delta x = 10^{-3}/3$, $\delta =1$ and $K=1.25$. 
\setlength{\malargeur}{4.8cm}
\setlength{\figureheight}{0.683\malargeur}
\setlength{\figurewidth}{\malargeur} 

\begin{figure}[h]
  \centering
\begin{tabular}{cc}
\input{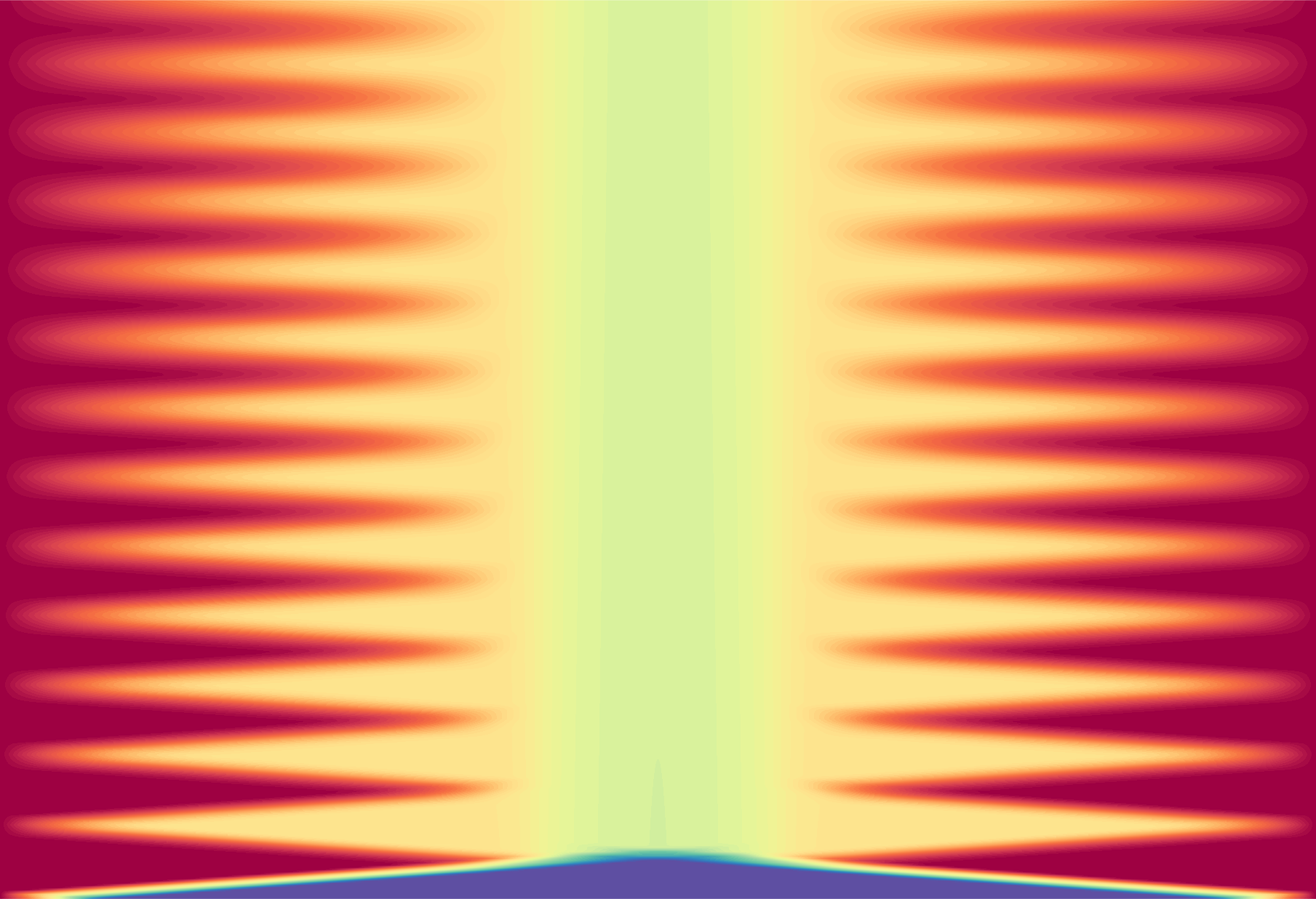}
  &
\input{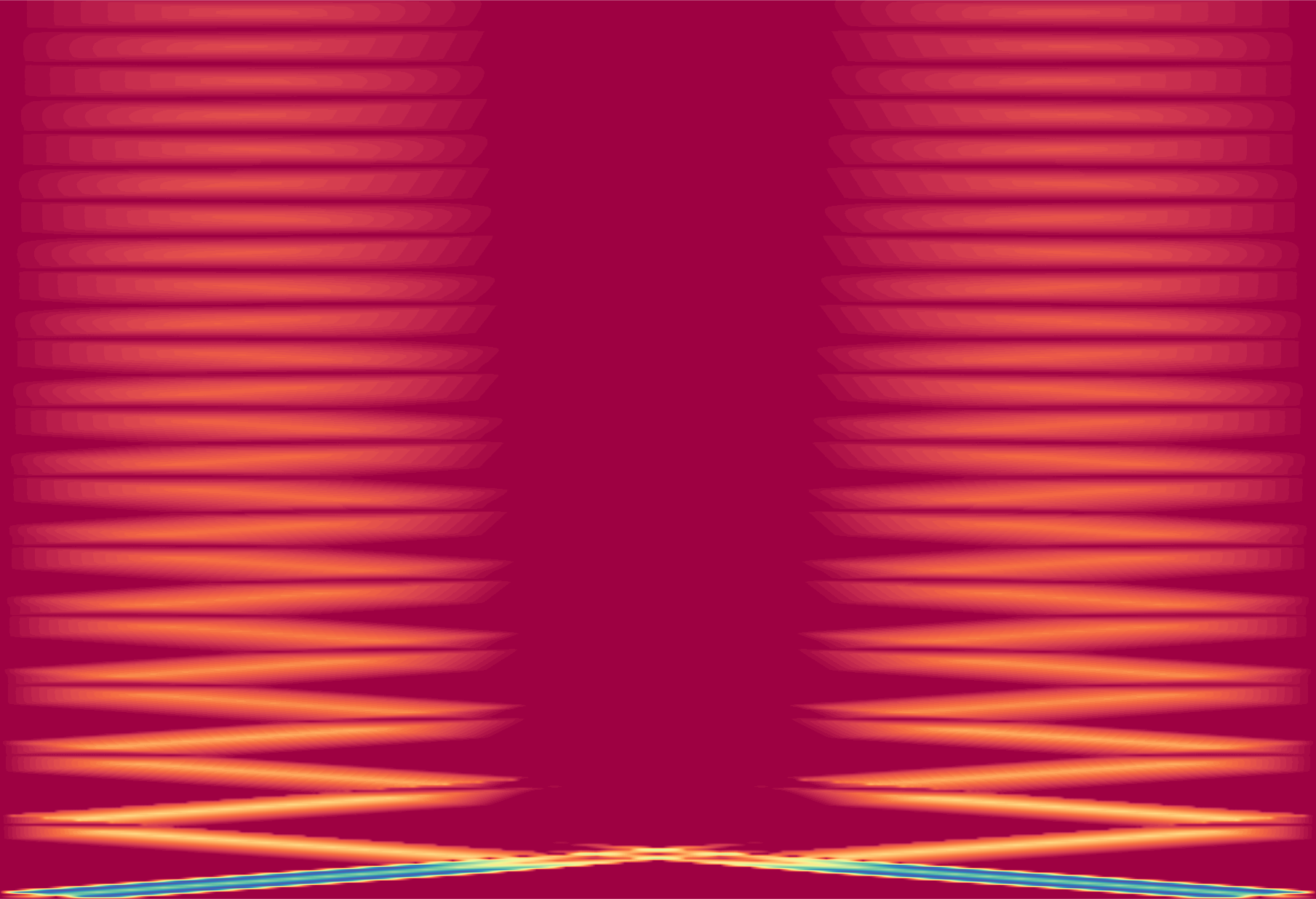}\\
\end{tabular}   
  \caption{Evolution of the solution to \eqref{eq:wave}, $u$ on the left and $\partial_t u$ on the right}
\label{fig:wave}
\end{figure}
As can be expected (see Figure \ref{fig:wave}), the time derivative of the solution is annihilated on the support of $a$ after a time $T^*$, the solution $u$ becoming constant for $t>T^*$.
\\
Let us underline that the linear wave equation is the prototype of a $2\times 2$ system of conservation laws, which can be easily seen with the use of characteristics. It is thus natural to consider such models as a generalization of \eqref{eq:flux-general}--\eqref{eq:ci}. Note that the behavior of the solution of \eqref{eq:wave} when the damping acts everywhere in the domain has been studied in \cite{BaCaDi07}, or when the damping acts on the boundary \cite{Perrollaz-Rosier-2014}, but the case of a localized damping term involving $\partial_t u$ still does not seem to be precisely described in the literature. In fact, the interested reader should also notice the close connection of this problem with the non-linear damped oscillator of the form 
$$
	m x'' + \delta \frac{x'}{|x'|^\alpha} + \omega^2 x = 0, \quad t \geq 0,
$$
 with $m>0$, $\alpha \in (0,1]$, and $\omega >0$, whose large time behavior is rather subtle, see e.g. \cite{AmDi03,Vazquez-2003}. 
 
\subsection{Schr\"odinger equation}
The last equation we consider is the strongly damped cubic nonlinear Schr\"odinger (NLS) equation, motivated by the works \cite{CaGa11,CaOz15} in which the damping is effective everywhere. We thus wonder if the previous results for hyperbolic equations can be extended to the Schr\"odinger equation
\begin{equation}
  \label{eq:schrod}
i  \partial_t u + \partial_x^2 u = -q|u|^2 u - i a(x) \frac{u}{|u|^\alpha}, \quad (t,x) \in \R\times \T.
\end{equation}
with initial datum $u(0,x)=u_0(x)$. If the damping function $a$ is
zero, then the cubic NLS equation admits a special solution known as
soliton. This solution is given by 
\begin{equation}
  \label{eq:soliton}
  u_{\text{sol}}(t,x)=\sqrt{\frac{2 k}{q}} \text{sech}(\sqrt{k} (x-ct)\exp(i\frac{c}{2}(x-ct))\exp(i(k+\frac{c^2}{4})t).
\end{equation}
This solution evolves as the profile $u_0$ propagating at velocity $c$ with time phase change. This solution for $c=20$, $k=0.81$ and the torus $x\in (-10,10)$ is plotted on Figure \ref{fig:soliton}.
\begin{figure}[h]
  \centering
\input{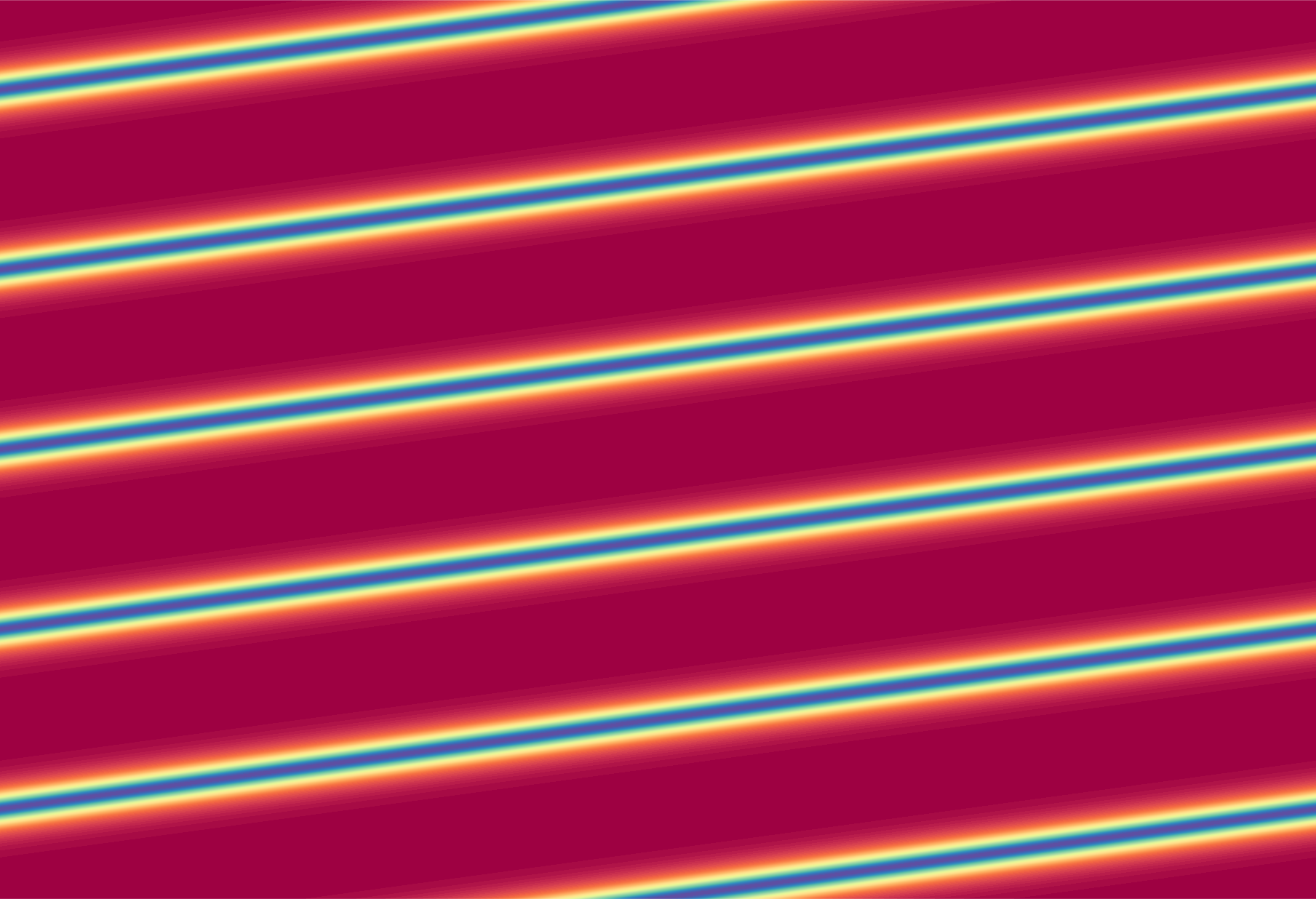}
  \caption{Evolution of the modulus of the soliton \eqref{eq:soliton} for $c=20$ and $k=0.81$.}
\label{fig:soliton}
\end{figure}
 The numerical scheme again relies on the Strang splitting scheme for three operators \eqref{eq:strang3}. As for the Burgers heat equation, the space approximation is performed thanks to fast Fourier transform. The complex solution to ODE $\partial_t u = - a(x) u/|u|^\alpha$ is given by
\[
u(t,x)=(|u_0|^\alpha-\alpha a(x) t)^{1/\alpha}_+\exp(i \text{Arg}(u_0)),
\]
whereas the solution to the ODE $i\partial_t u= -q|u|^2 u$ is given by
\[
u(t,x)=\exp(itq |u_0(x)|^2)u_0(x).
\]
We present the effects of the damping function $a(x)=\delta \1_{\omega}$, $\omega = (-10,-6)\cup(6,10)$ on the soliton for $\T=(-10,10)$ and  $\alpha=1$.
The soliton initial datum overlaps the support of $a$.
The numerical parameters are $\delta t=5\cdot 10^{-4}$ and $\delta x=10\cdot 2^{-12}$. As expected, the solution begins to propagate to the right direction and then vanishes on the support of $a$ (see Figure \ref{fig:schrod}). This is more clear on log scale. Again, to our knowledge, this behavior has not been proved rigorously in the literature. 

\begin{figure}[h]
  \centering
\begin{tabular}{cc}
\input{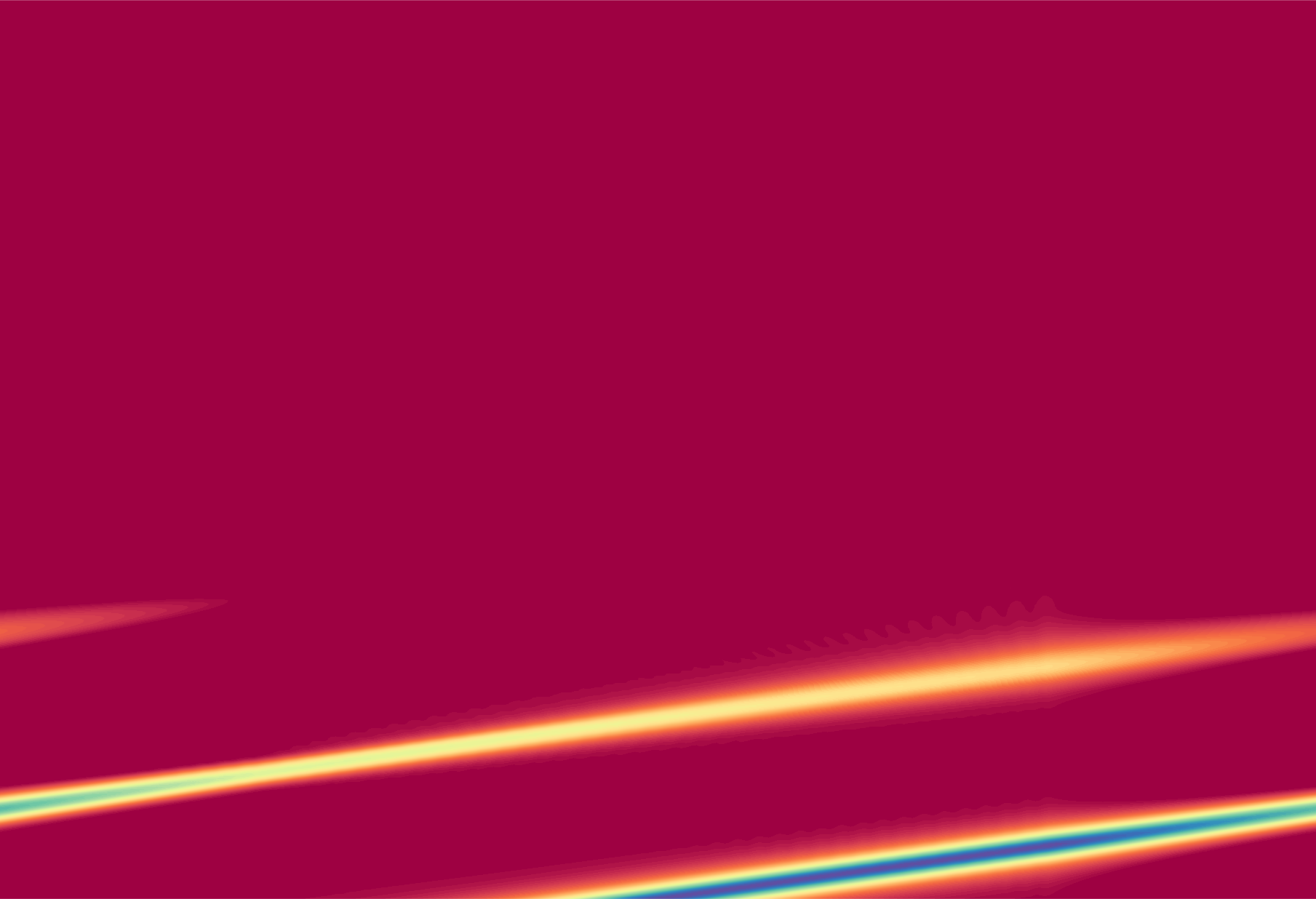}
  &
\input{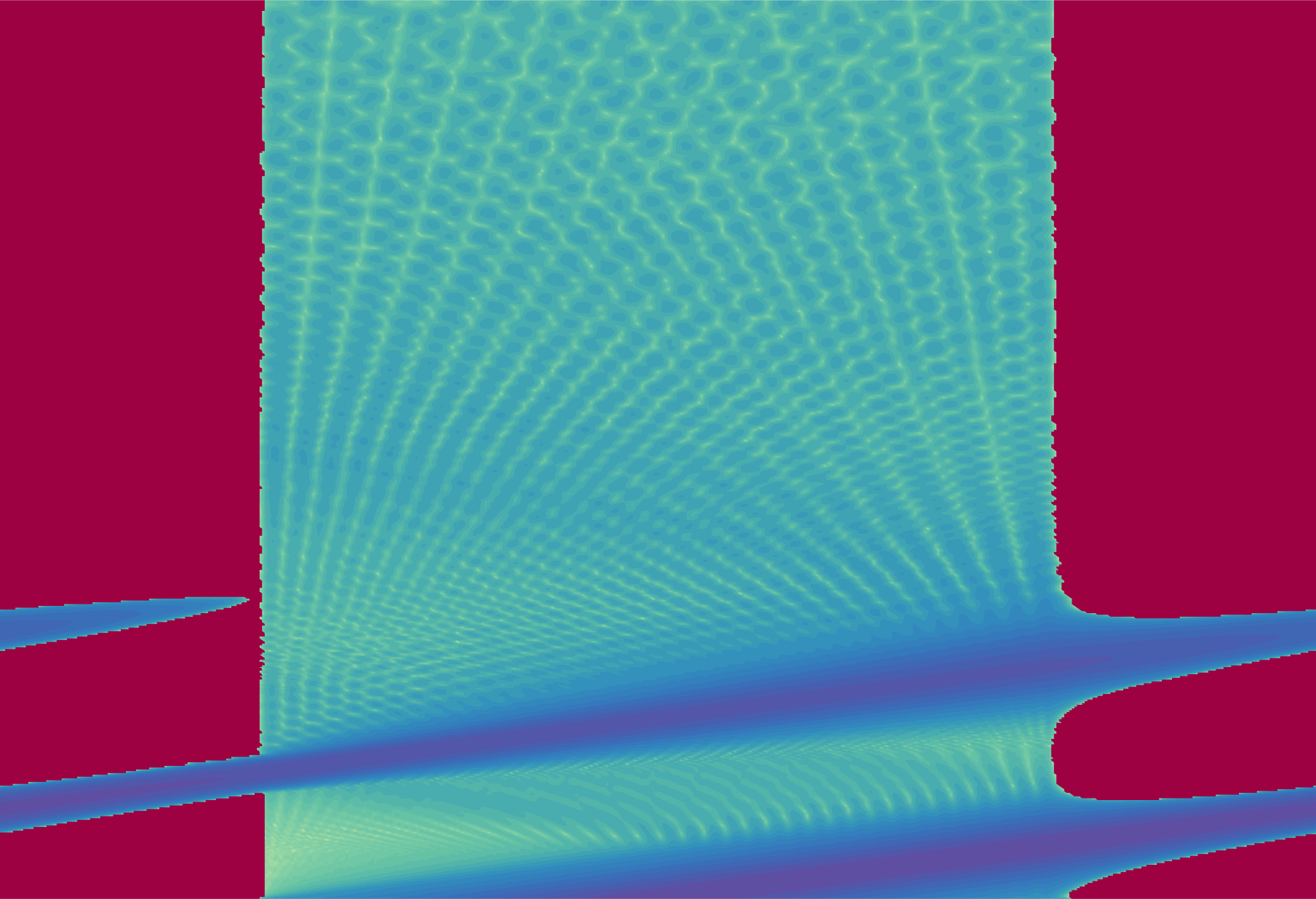}\\
\end{tabular}   
  \caption{Evolution of the solution to \eqref{eq:schrod} in standard scale (left) and in log scale (right)}
\label{fig:schrod}
\end{figure}

\bibliographystyle{siam}

\bibliography{biblio}

\begin{thebibliography}{10}

\bibitem{AdAtCa06}
{\sc S.~Adly, H.~Attouch, and A.~Cabot}, {\em Finite time stabilization of
  nonlinear oscillators subject to dry friction}, in Nonsmooth mechanics and
  analysis, vol.~12 of Adv. Mech. Math., Springer, New York, 2006,
  pp.~289--304.

\bibitem{AgEs86}
{\sc J.~Aguirre and M.~Escobedo}, {\em A {C}auchy problem for {$u_t-\Delta
  u=u^p$} with {$0<p<1$}. {A}symptotic behaviour of solutions}, Ann. Fac. Sci.
  Toulouse Math. (5), 8 (1986/87), pp.~175--203.

\bibitem{AmDi03}
{\sc H.~Amann and J.~I. Diaz}, {\em A note on the dynamics of an oscillator in
  the presence of strong friction}, Nonlinear Anal., 55 (2003), pp.~209--216.

\bibitem{Amontons}
{\sc G.~Amontons}, {\em De la resistance caus\'ee dans les machines}, in
  M\'emoires de l'Acad\'emie Royale des Sciences Paris A, 1699, pp.~257--282.

\bibitem{BaCaDi07}
{\sc B.~Baji, A.~Cabot, and J.~I. D{\'{\i}}az}, {\em Asymptotics for some
  nonlinear damped wave equation: finite time convergence versus exponential
  decay results}, Ann. Inst. H. Poincar\'e Anal. Non Lin\'eaire, 24 (2007),
  pp.~1009--1028.

\bibitem{Bastin-Coron-book}
{\sc G.~Bastin and J.-M. Coron}, {\em Stability and boundary stabilization of
  1-{D} hyperbolic systems}, vol.~88 of Progress in Nonlinear Differential
  Equations and their Applications, Birkh\"auser/Springer, [Cham], 2016.
\newblock Subseries in Control.

\bibitem{Be01}
{\sc Y.~Belaud}, {\em Time-vanishing properties of solutions of some degenerate
  parabolic equations with strong absorption}, Adv. Nonlinear Stud., 1 (2001),
  pp.~117--152.

\bibitem{Be10}
\leavevmode\vrule height 2pt depth -1.6pt width 23pt, {\em Extinction of
  solutions of some nonlinear parabolic equations}, Sovrem. Mat. Fundam.
  Napravl., 36 (2010), pp.~5--11.

\bibitem{BeHeVe01}
{\sc Y.~Belaud, B.~Helffer, and L.~V{\'e}ron}, {\em Long-time vanishing
  properties of solutions of some semilinear parabolic equations}, Ann. Inst.
  H. Poincar\'e Anal. Non Lin\'eaire, 18 (2001), pp.~43--68.

\bibitem{BeSh07}
{\sc Y.~Belaud and A.~Shishkov}, {\em Long-time extinction of solutions of some
  semilinear parabolic equations}, J. Differential Equations, 238 (2007),
  pp.~64--86.

\bibitem{BeSh10}
{\sc Y.~Belaud and A.~Shishkov}, {\em Extinction of solutions of semilinear
  higher order parabolic equations with degenerate absorption potential}, J.
  Evol. Equ., 10 (2010), pp.~857--882.

\bibitem{CaGa11}
{\sc R.~Carles and C.~Gallo}, {\em Finite time extinction by nonlinear damping
  for the {S}chr{\"o}dinger equation}, Comm. Part. Diff. Eq., 36 (2011),
  pp.~961--975.

\bibitem{CaOz15}
{\sc R.~Carles and T.~Ozawa}, {\em Finite time extinction for nonlinear
  {S}chr\"odinger equation in {1D} and {2D}}, Comm. in Partial Diff. Eq., 40
  (2015), pp.~897--917.

\bibitem{Coron-Bastin-Andrea-SICON-2008}
{\sc J.-M. Coron, G.~Bastin, and B.~d'Andr{\'e}a Novel}, {\em Dissipative
  boundary conditions for one-dimensional nonlinear hyperbolic systems}, SIAM
  J. Control Optim., 47 (2008), pp.~1460--1498.

\bibitem{Coron-Andrea-Bastin-IEEE-2007}
{\sc J.-M. Coron, B.~d'Andr{\'e}a Novel, and G.~Bastin}, {\em A strict
  {L}yapunov function for boundary control of hyperbolic systems of
  conservation laws}, IEEE Trans. Automat. Control, 52 (2007), pp.~2--11.

\bibitem{Coulomb}
{\sc C.~A. Coulomb}, {\em Th\'eorie des machines simples, en ayant \'egard au
  frottement de leurs parties, et la roideur des cordages}, M{\'e}m. Math.
  Phys., X (1785), pp.~161--342.

\bibitem{Da00}
{\sc C.~M. Dafermos}, {\em Hyperbolic conservation laws in continuum physics},
  vol.~325 of Grundlehren der Mathematischen Wissenschaften [Fundamental
  Principles of Mathematical Sciences], Springer-Verlag, Berlin, 2000.

\bibitem{Filippov}
{\sc A.~F. Filippov}, {\em Differential equations with discontinuous righthand
  sides}, vol.~18 of Mathematics and its Applications (Soviet Series), Kluwer
  Academic Publishers Group, Dordrecht, 1988.
\newblock Translated from the Russian.

\bibitem{HLW}
{\sc E.~Hairer, C.~Lubich, and G.~Wanner}, {\em Geometric Numerical
  Integration}, vol.~31 of Springer Series in Computational Mathematics,
  Springer, Heidelberg, 2010.

\bibitem{Ignat-Pozo-Zuazua}
{\sc L.~I. Ignat, A.~Pozo, and E.~Zuazua}, {\em Large-time asymptotics,
  vanishing viscosity and numerics for 1-{D} scalar conservation laws}, Math.
  Comp., 84 (2015), pp.~1633--1662.

\bibitem{Lev}
{\sc R.~LeVeque}, {\em Numerical Methods for Conservation Laws}, Lectures in
  Mathematics, ETH Z\"urich, Birkh\"auser, second~ed., 1992.

\bibitem{Lev2}
\leavevmode\vrule height 2pt depth -1.6pt width 23pt, {\em Finite volume
  methods for hyperbolic problems}, Cambridge Texts in Applied Mathematics,
  Cambridge University Press, Cambridge, 2002.

\bibitem{Olsson}
{\sc H.~Olsson}, {\em Control systems with friction department of automatic
  control}, Lund Institute of Technology (LTH),  (1996).

\bibitem{Perrollaz-Rosier-2014}
{\sc V.~Perrollaz and L.~Rosier}, {\em Finite-time stabilization of {$2\times
  2$} hyperbolic systems on tree-shaped networks}, SIAM J. Control Optim., 52
  (2014), pp.~143--163.

\bibitem{Quarteroni}
{\sc A.~Quarteroni and A.~Valli}, {\em Numerical Approximation of Partial
  Differential Equations}, Springer Series in Computational Mathematics,
  Springer, 2008.

\bibitem{Vazquez-2003}
{\sc J.~L. V\'azquez}, {\em The nonlinearly damped oscillator}, ESAIM Control
  Optim. Calc. Var., 9 (2003), pp.~231--246.

\end{thebibliography}

\end{document}